\documentclass{article}
\usepackage{iclr2027_preprint,times}

\usepackage{amsmath,amssymb,amsthm}
\usepackage{booktabs}
\usepackage{xcolor}
\usepackage{algorithm}
\usepackage{algpseudocode}
\usepackage{enumitem}
\usepackage{hyperref}
\usepackage{url}

\DeclareMathOperator*{\argmin}{arg\,min}
\DeclareMathOperator*{\argmax}{arg\,max}
\DeclareMathOperator{\lmo}{lmo}
\DeclareMathOperator{\KL}{KL}
\DeclareMathOperator{\conv}{conv}
\DeclareMathOperator{\sign}{sign}

\newcommand{\R}{\mathbb{R}}
\newcommand{\E}{\mathbb{E}}
\newcommand{\simplex}{\Delta_K}
\newcommand{\dn}[1]{\|#1\|_*}
\newcommand{\pn}[1]{\|#1\|}
\newcommand{\ip}[2]{\langle #1, #2\rangle}
\newcommand{\gF}{\nabla F}

\newtheorem{theorem}{Theorem}[section]
\newtheorem{lemma}[theorem]{Lemma}
\newtheorem{proposition}[theorem]{Proposition}
\newtheorem{corollary}[theorem]{Corollary}
\newtheorem{assumption}{Assumption}
\theoremstyle{definition}
\newtheorem{definition}{Definition}[section]
\theoremstyle{remark}
\newtheorem{remark}{Remark}[section]
\theoremstyle{plain}

\title{Geometry-Aware Multi-Objective Optimization:
Norm-Constrained Oracles with Entropic Weight Updates}

\author{El Mahdi Chayti, Alireza Mirrokni, Martin Jaggi \\ Machine Learning and Optimization Laboratory (MLO), EPFL}

\iclrfinalcopy  

\begin{document}

\maketitle

\begin{abstract}
Multiple-gradient descent is typically formulated in Euclidean geometry,
whereas matrix-aware optimizers use directions induced by other norms. We
analyze a single-loop multi-objective method combining a norm-constrained
linear minimization oracle with entropic updates of task weights. For any
finite-dimensional norm pair, we establish an $O(T^{-1/2})$ ergodic stationarity
rate in the deterministic setting, deriving the required trajectory gradient
bound from smoothness and lower-boundedness in the unregularized case. With
stochastic gradients and a bounded-gradient assumption, one persistent
blended-gradient momentum buffer yields an $O(T^{-1/4})$ rate whose leading
term has no explicit dependence on the number of tasks. We also establish
static regret for the inner weighting objective on arbitrary bounded gradient
sequences and, with entropy regularization and a weight floor, iteration-wise
inner-gap control after burn-in while retaining ergodic stationarity,
conditional on a trajectory gradient bound. The optimal blended gradient can
be nonunique in spectral and sign geometries, motivating a selection-independent
inner gap. Our analysis extends the spectral construction studied by concurrent
MOON to general norms. Experiments compare the method with existing multi-task
optimizers and examine its oracle, weighting, and momentum choices.
\end{abstract}

\section{Introduction}

Many learning problems optimize several objectives at once: a shared backbone
serving segmentation and depth, a language model balancing helpfulness against
safety. Writing $f_1,\dots,f_K:\R^d\to\R$ for the losses, one seeks a point that
cannot be improved on one objective without degrading another: a Pareto-optimal
point. Pareto stationarity is a first-order necessary condition for such
optimality. A standard algorithm targeting this condition is multiple
gradient descent \citep{desideri2012mgda,fliege2000steepest,sener2018mtl}:
choose the convex combination of task gradients with smallest norm,
\begin{equation}\label{eq:mgda-intro}
w^\star(x) \in \argmin_{w\in\simplex}\ \|\gF(x)w\|_2,
\qquad \gF(x)w := \sum_{i=1}^K w_i \nabla f_i(x),
\end{equation}
and step along $-\gF(x)w^\star(x)$, the \emph{conflict-avoidant} direction
that minimizes $\max_i\ip{\nabla f_i(x)}{d}+\tfrac12\|d\|_2^2$.
A large literature refines
the weighting
\citep{yu2020pcgrad,chen2020graddrop,liu2021cagrad,navon2022nashmtl,liu2023famo,ban2024fairgrad}
and proves rates for stochastic variants
\citep{liu2021smg,zhou2022crmogm,fernando2023moco,chen2024modo,xiao2023sdmgrad,zhang2025mgda}.

Two choices in \eqref{eq:mgda-intro} are Euclidean by inheritance. The \emph{parameter update} is steepest
descent in $\ell_2$, whereas a network's parameters are matrices with their own
scales; optimizers that take an LMO step in an operator norm, as in Muon
\citep{jordan2024muon}, Scion \citep{pethick2025scion} and the duality view of
\citet{bernstein2024anthology,bernstein2024duality,large2024modular}, have
improved large-scale training performance \citep{liu2025moonlight}.
These methods are single-objective; concurrent MOON \citep{moon2026} brings
the spectral construction to MOO. The \emph{weight update} lives on the simplex,
where entropic mirror descent gives an $O(M\sqrt{\log K/T})$ regret bound
under an $\ell_\infty$ subgradient bound $M$, compared with the
$O(M\sqrt{K/T})$ bound obtained by converting that same bound to $\ell_2$
for Euclidean projected descent
\citep{nemirovski1983problem,beck2003mirror,bubeck2015convex}.

The closest methods already use elements of this construction. MSMD
\citep{msmd2024} uses entropic mirror descent on the weights with a Euclidean
direction variable and a double loop. MOON \citep{moon2026} combines a
spectral-norm oracle with logit-based task weighting and establishes the same
deterministic and stochastic rate exponents. Our contribution is a general-norm
analysis, a derived deterministic trajectory gradient bound, and refinements
for blended momentum and inner-problem tracking.

\begin{itemize}[leftmargin=1.4em,itemsep=1pt]
\item \textbf{General-norm deterministic convergence.}
Theorem~\ref{thm:det} gives
$\tfrac1T\sum_t\dn{\gF(x_t)w_t}\le(\sqrt{2\Delta L}+M\sqrt{\log K/2})/\sqrt T$.
For the unregularized, unclipped method, Lemma~\ref{lem:M} derives
$M=O(\sqrt{L\Delta\log K})$ from smoothness and lower-boundedness, yielding
$O(L\Delta\log^2K/\epsilon^2)$ iterations without a separate gradient-bound
assumption. Each iteration uses $K$ task-gradient evaluations, one oracle call,
and one entropic weight update.
\item \textbf{Stochastic convergence with one persistent buffer.}
Under a trajectory gradient bound, Theorem~\ref{thm:stoch} gives
$O(\epsilon^{-4})$ iterations with momentum on the blended gradient.
A weighted-average bound on noisy weight increments keeps explicit $K$
dependence out of the leading $T^{-1/4}$ term; lower-order terms retain linear
$K$ dependence under bounded variance, replaced by $\log K$ under a stronger
tail assumption (Corollary~\ref{cor:subg}).
\item \textbf{Selection-independent inner optimality and iteration-wise tracking.}
The optimal blended gradient can be nonunique in the Muon and Signum
geometries (Proposition~\ref{prop:unique}), motivating the inner gap.
Theorem~\ref{thm:regret} gives static regret on arbitrary bounded gradient
sequences. Average gap and regret are also bounded by the blended-gradient
residual, so alone they do not distinguish adaptation from a converged
scalarization; with regularization, clipping and a trajectory gradient bound,
Theorem~\ref{thm:iterwise} instead controls the inner gap at \emph{every}
iteration after a $\tilde O(M^4\log^2K/\epsilon^4)$ burn-in while retaining
ergodic stationarity, in any norm but with a non-competitive exponent.
\end{itemize}

\section{Related work}

\paragraph{Geometry-aware optimizers.} Non-Euclidean steepest descent is
classical \citep{nesterov2012coordinate,carlson2015spectral}; the current line
begins with \citet{bernstein2024anthology} and Muon \citep{jordan2024muon}.
\citet{pethick2025scion} give the LMO formulation we build on, with
$O(n^{-1/4})$ stochastic rates and layer-wise norm and radius choices;
\citet{bernstein2024duality,large2024modular} develop the duality view;
\citet{liu2025moonlight,riabinin2025gluon} address scaling, and
\citet{yang2021tp5,yang2023spectral} hyperparameter transfer. All are
single-objective.

\paragraph{Stochastic MGDA.} \citet{liu2021smg} give the first stochastic
multi-gradient method; \citet{zhou2022crmogm} show naive stochastic gradient
manipulation need not converge; \citet{fernando2023moco} fix the bias with a
tracking variable; \citet{chen2024modo} show a three-way trade-off between
optimization, generalization and conflict avoidance; \citet{xiao2023sdmgrad}
add a direction-oriented regularizer. \citet{zhang2025mgda} analyze MGDA under
generalized smoothness with no gradient bound, obtaining $O(\epsilon^{-2})$ and
$O(\epsilon^{-4})$ for average CA distance and $O(\epsilon^{-11})$,
$O(\epsilon^{-17})$ for iteration-wise CA distance; these are our reference
point. All are Euclidean with projected weights.

\paragraph{The two closest methods.} \emph{MSMD} \citep{msmd2024} solves the
MGDA saddle problem by stochastic mirror descent with the negative entropy on
the weights, so the $\log K$ benefit of entropic weights is theirs; its
direction variable is Euclidean, it is double-loop, and it assumes bounded
gradients. \emph{MOON} \citep{moon2026} uses spectral--nuclear geometry with
momentum and logit-based task weighting. Its analysis assumes bounded
gradients and losses and carries polynomial dependence on the number of
objectives. Our spectral specialization shares this geometric construction;
our stochastic analysis evaluates the oracle at the previous buffer to
decorrelate the direction from the current minibatch, and our tracking variant
adds a weight floor. Practical implementations also differ in momentum,
logit updates, and auxiliary optimizers, as detailed in
Appendix~\ref{app:expdetails}; the empirical comparison does not isolate a
single change. Our analytical additions are the arbitrary norm pair, the
derived deterministic $M$, comparator-dependent regret, iteration-wise inner
gap, nonuniqueness analysis, and the one-buffer $K$ argument. Conditional-gradient MOO
\citep{assuncao2021conditional} applies an oracle to the constraint set, a
different object. See \citet{gradmoosurvey2025} for a survey and
\citet{nemirovski1983problem,beck2003mirror,kivinen1997eg,bubeck2015convex} for
the entropic mirror-descent bounds we use off the shelf.

\section{Setup}\label{sec:setup}

\paragraph{Norms and the oracle.} Fix a norm $\pn\cdot$ on $\R^d$, the
\emph{primal} norm, measuring parameter displacements, with dual
$\dn g := \sup_{\pn v\le1}\ip g v$ measuring gradients; H\"older's inequality
$\ip g v\le\dn g\pn v$ holds throughout. Let $B:=\{v:\pn v\le R\}$. The
\emph{linear minimization oracle} is any selection
\begin{equation}\label{eq:lmo}
\lmo(g)\in\argmin_{v\in B}\ \ip g v ,
\qquad\text{so that}\qquad
\pn{\lmo(g)}\le R,\quad \ip{g}{\lmo(g)} = -R\dn g .
\end{equation}
The identity holds for every element of the argmin and is the only property of
$\lmo$ our analysis uses; we set $\lmo(0)=0$, so that an exactly zero
blend produces a zero parameter step. With entropy regularization, the weights
can still change; the full state need not be a fixed point. Table~\ref{tab:lmo} lists the instances of interest.
For a network with parameter blocks $x=(x_\ell)_\ell$ the norm used in practice
is $\pn x:=\max_\ell\|x_\ell\|_{(\ell)}/\rho_\ell$, with dual
$\dn g=\sum_\ell\rho_\ell\|g_\ell\|_{(\ell),*}$, blockwise oracle
$\lmo(g)=(\rho_\ell\,\lmo_\ell(g_\ell))_\ell$ of per-block radius $R\rho_\ell$,
Assumption~\ref{as:smooth} imposed in this pair, and
$\zeta^2\le\sum_\ell\rho_\ell^2\zeta_\ell^2$.
The minus sign is inside the oracle: the update is $x_{t+1}=x_t+\gamma D_t$
with $D_t=\lmo(G_t)$, which for the Euclidean ball is normalized gradient
descent, $x-\gamma Rg/\|g\|_2$ \citep{pethick2025scion}.

\begin{table}[t]
\centering\small
\begin{tabular}{llll}
\toprule
\textbf{Primal $\pn\cdot$} & \textbf{Dual $\dn\cdot$} & \textbf{$\lmo(g)$} & \textbf{Known as}\\
\midrule
$\ell_2$ & $\ell_2$ & $-R\,g/\|g\|_2$ & normalized SGD\\
$\ell_\infty$ & $\ell_1$ & $-R\sign(g)$ & Signum \citep{bernstein2018signsgd}\\
spectral & nuclear & $-R\,UV^\top$, $\ G=U\Sigma V^\top$ & Muon \citep{jordan2024muon}\\
\bottomrule
\end{tabular}
\caption{Norm-constrained oracles; \citet{pethick2025scion} apply them
layer-wise with per-layer radii $\rho_\ell$.}
\label{tab:lmo}
\end{table}

\paragraph{MOO in a norm pair.} Write $F(x)=(f_1(x),\dots,f_K(x))^\top$ and,
for $w\in\simplex:=\{w\ge0,\ \sum_iw_i=1\}$, $\gF(x)w:=\sum_iw_i\nabla f_i(x)$
and $F(x)w:=\sum_iw_if_i(x)$.

\begin{definition}[Stationarity, CA distance, inner gap]\label{def:ca}
$x$ is \emph{$\epsilon$-accurate Pareto stationary} if
$\min_{w\in\simplex}\dn{\gF(x)w}\le\epsilon$; $w^\star(x)$ denotes a minimizer
and $d^\star(x):=\gF(x)w^\star(x)$. For an iterate--weight pair $(x_t,w_t)$,
the \emph{CA distance} is $\mathrm{CA}_t:=\dn{\gF(x_t)w_t-d^\star(x_t)}$ and the
\emph{inner gap} is
$\mathrm{gap}_t:=\dn{\gF(x_t)w_t}-\min_{w\in\simplex}\dn{\gF(x_t)w}\ge0$. By the
reverse triangle inequality $\mathrm{gap}_t\le\mathrm{CA}_t$ for every selection
$w^\star(x_t)$; $\mathrm{gap}_t=0$ iff $w_t$ solves the inner problem; and,
unlike $\mathrm{CA}_t$, the gap involves no selection.
\end{definition}

Two facts about these objects differ from the Euclidean intuition
(proofs in Appendix~\ref{app:saddle}).

\begin{proposition}[Uniqueness of the optimal blended gradient]\label{prop:unique}
If the dual unit ball is strictly convex, $d^\star(x)$ is the same for every
minimizer $w^\star(x)$, for every instance. If it is not, there is an instance
with $K=2$ in which every $w\in\simplex$ is optimal and $d^\star$ ranges over a
whole segment: take $\nabla f_1=a$, $\nabla f_2=b$ for distinct unit vectors
$a,b$ with $\dn{(a+b)/2}=1$ (for $\ell_1$: $a=(1,0)$, $b=(0,1)$).
\end{proposition}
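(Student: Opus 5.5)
The first claim follows from convexity of the set of optimal blends together with strict convexity of the dual ball. The second is a direct check on the stated instance.

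\emph{Uniqueness.} Let $m:=\min_{w\in\simplex}\dn{\gF(x)w}$ and let $w,w'$ be two minimizers with blends $d=\gF(x)w$ and $d'=\gF(x)w'$. The map $w\mapsto\gF(x)w$ is linear, so $\bar w=(w+w')/2\in\simplex$ has blend $(d+d')/2$. By the triangle inequality and optimality,
\[
m\le\dn{(d+d')/2}\le\tfrac12(\dn d+\dn{d'})=m .
\]
If $m=0$, then $d=d'=0$ and we are done. If $m>0$, rescale by $1/m$. Then $d/m$ and $d'/m$ lie on the dual unit sphere, and so does their midpoint. Strict convexity of the dual ball means no segment between distinct boundary points stays on the boundary. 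Hence $d=d'$.

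So $d^\star(x)$ is the same for every minimizer, whatever the choice of $f_i$ and $x$. The only care needed is the case $m=0$ and the reduction to unit vectors by homogeneity.

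\emph{Nonuniqueness.} Suppose the dual ball is not strictly convex. Then its boundary contains a nondegenerate segment, so there are distinct $a,b$ with $\dn a=\dn b=1$ and $\dn{(a+b)/2}=1$. Put $\nabla f_1=a$ and $\nabla f_2=b$ at $x$, for example with linear $f_i(x)=\ip{a}{x}$ and $\ip{b}{x}$.

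On one side, every $w\in\simplex$ gives $\dn{w_1a+w_2b}\le w_1+w_2=1$. On the other side, $\phi(s):=\dn{(1-s)a+sb}$ is convex on $[0,1]$ and equals $1$ at $s=0,\tfrac12,1$. A convex function that is at most $1$ on $[0,1]$ and equals $1$ at an interior point must be identically $1$. Indeed, if $\phi(s_0)<1$ for some $s_0$, convexity along the line through $s_0$ and $\tfrac12$ would force $\phi>1$ at an endpoint on the far side, which contradicts $\phi\le1$ there.

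So every $w$ attains the minimum value $1$. The map $w\mapsto w_1a+w_2b$ is injective because $a\ne b$, so $d^\star$ ranges over the whole segment $[a,b]$.

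For $\ell_1$, take $a=(1,0)$ and $b=(0,1)$. Then $\dn{(a+b)/2}=\tfrac12+\tfrac12=1$. This dual arises from the $\ell_\infty$ primal norm (Signum). The nuclear norm, dual to the spectral primal (Muon), is handled the same way with $a=e_1e_1^\top$ and $b=e_2e_2^\top$.

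The main obstacle is only the equivalence between "not strictly convex" and "there exist distinct unit vectors whose midpoint has unit norm". I would take this as the definition of strict convexity, or note that a flat piece of the sphere yields such a pair. Everything else is routine.
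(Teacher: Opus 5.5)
Your proof is correct. The uniqueness half is the same midpoint argument the paper uses: the averaged weights are feasible, $m=0$ is treated separately, and you rescale by $m$. The paper also takes strict convexity in the midpoint form, so the obstacle you flag at the end does not arise.

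The nonuniqueness half reaches the conclusion that all of $[a,b]$ lies on the unit sphere by a different route. The paper takes a norming functional $\varphi$ at $(a+b)/2$, which exists because the dual norm is a maximum over the compact primal unit ball. Since $\varphi(a),\varphi(b)\le1$ and their average is $1$, both equal $1$. So $\varphi$ bounds the norm below by $1$ along the whole segment, while the triangle inequality bounds it above. You instead note that $s\mapsto\dn{(1-s)a+sb}$ is convex, at most $1$ on $[0,1]$, and equal to $1$ at the interior point $s=\tfrac12$, hence constant; your justification of that step is correct.

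Your version is purely one-dimensional and needs no supporting functional. The paper's version gives a bit more structure: a single functional norms every point of the segment, so the flat piece lies in one exposed face of the dual sphere. In oracle terms, there is a common element of $\lmo(h)$ for all $h\in[a,b]$.

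A minor point: linear $f_i$ realize the gradients but violate Assumption~\ref{as:lb}. The quadratics $\tfrac12\|x\|_2^2+\ip{a}{x}$ and $\tfrac12\|x\|_2^2+\ip{b}{x}$ at $x=0$ avoid this, as in the paper's sign-oracle example.
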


\begin{proposition}[Stationary points of fixed scalarizations are Pareto stationary]\label{prop:scalarization}
If $\bar x$ is a first-order stationary point of $x\mapsto F(x)u$ for a fixed
$u\in\simplex$, i.e. $\gF(\bar x)u=0$, then $\min_{w\in\simplex}\dn{\gF(\bar x)w}=0$,
attained at $w=u$.
\end{proposition}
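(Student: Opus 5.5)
The statement immediately above is Proposition~\ref{prop:scalarization}, and its proof is a direct verification. The plan is to show that $u$ is feasible for the inner problem and attains the value zero. The inner objective $w\mapsto\dn{\gF(\bar x)w}$ is nonnegative on $\simplex$, since it is a norm. Hence its minimum over $\simplex$ is at least $0$.

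Next I would evaluate the objective at the feasible point $u\in\simplex$. By hypothesis $\gF(\bar x)u=\sum_i u_i\nabla f_i(\bar x)=0$, which is exactly the stationarity condition for the scalarized map $x\mapsto F(x)u=\sum_i u_if_i(x)$, because its gradient is $\gF(x)u$ by linearity of differentiation. Since a norm vanishes at the zero vector, $\dn{\gF(\bar x)u}=\dn{0}=0$. Combining this with the lower bound gives $\min_{w\in\simplex}\dn{\gF(\bar x)w}=0$, attained at $w=u$. In the notation of Definition~\ref{def:ca}, $u$ is then a valid selection $w^\star(\bar x)$, and $\bar x$ is $\epsilon$-accurate Pareto stationary for every $\epsilon\ge0$.

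There is essentially no obstacle. The only points to state are that the argument uses nothing about the particular norm pair beyond positive definiteness, namely $\dn{0}=0$ and $\dn{\cdot}\ge0$. In particular, no strict convexity is needed, in contrast to Proposition~\ref{prop:unique}. Other minimizers may exist besides $u$, but any minimizer $w^\star$ satisfies $\dn{\gF(\bar x)w^\star}=0$ and hence $d^\star(\bar x)=0$ by definiteness. So in this case the optimal blended gradient is unique even in the spectral and sign geometries.
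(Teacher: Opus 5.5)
Your argument is correct and matches the paper's proof: $u$ is feasible, $\dn{\gF(\bar x)u}=\dn{0}=0$, and nonnegativity of the norm gives minimum value $0$, attained at $w=u$. Your extra remark is also correct but not needed for the statement: by definiteness, every minimizer then has $d^\star(\bar x)=0$, so the optimal blended gradient is unique here even in the spectral and sign geometries.
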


Strict convexity holds for $\ell_p$, $1<p<\infty$, and fails for $\ell_1$ and
the nuclear norm, so in exactly the Muon and Signum geometries the CA distance
can depend on the selected minimizer; our guarantees hold for any measurable
selection or, better, for the selection-free gap. Throughout, $K\ge2$.
To distinguish Pareto stationarity from the residual of the weights actually
used, write $\psi_t(w):=\dn{\gF(x_t)w}$ and
$S_T:=T^{-1}\sum_t\psi_t(w_t)$. Directly from nonnegativity,
\begin{equation}\label{eq:gap-stationarity}
\frac1T\sum_t\mathrm{gap}_t\le S_T,\qquad
\frac1T\Big[\sum_t\psi_t(w_t)-\min_{u\in\simplex}\sum_t\psi_t(u)\Big]\le S_T.
\end{equation}
Also $\mathrm{CA}_t\le2\psi_t(w_t)$ in every norm; in Hilbert dual geometry,
$\mathrm{CA}_t^2\le\psi_t(w_t)^2-\psi_t(w^\star(x_t))^2$
\citep{zhang2025mgda}. Thus convergence of the actual blended-gradient
residual implies average-gap, average-CA, and static-regret upper bounds,
including for a converged fixed scalarization. Smallness of
$\min_w\psi_t(w)$ alone does not control a separately chosen $w_t$.

The gap is useful as a selection-independent inner optimality measure, but
does not certify simultaneous per-task descent for every oracle selection.
Appendix~\ref{app:saddle} gives the saddle condition and a sign-oracle
counterexample. The additional results are a comparator-dependent regret
bound valid independently of outer progress (Theorem~\ref{thm:regret}) and an
every-iterate tracking guarantee after burn-in (Theorem~\ref{thm:iterwise}).

\paragraph{Assumptions.}
\begin{assumption}[Smoothness in the norm pair]\label{as:smooth}
Each $f_i$ is differentiable and $\dn{\nabla f_i(x)-\nabla f_i(y)}\le L\pn{x-y}$.
\end{assumption}
\begin{assumption}[Lower bound]\label{as:lb}
$f_i^\star:=\inf_xf_i(x)>-\infty$; $\Delta:=\max_i\big(f_i(x_0)-f_i^\star\big)>0$
and $L>0$ (if $\Delta=0$ or $L=0$ every gradient vanishes at $x_0$ and there is
nothing to prove; excluding these keeps $M>0$ and the step sizes defined).
\end{assumption}
\begin{assumption}[Stochastic oracle]\label{as:noise}
With $\mathcal F_t$ the history up to the computation of $x_t,w_t$, the sample
$\xi_t$ is independent of $\mathcal F_t$ and
$\E[\nabla f_i(x_t;\xi_t)\mid\mathcal F_t]=\nabla f_i(x_t)$,
$\E[\|\nabla f_i(x_t;\xi_t)-\nabla f_i(x_t)\|_2^2\mid\mathcal F_t]\le\sigma^2$
for every $i$ (a shared minibatch across tasks is allowed).
\end{assumption}
\begin{assumption}[Gradient bound; stochastic setting only]\label{as:gradbound}
$\max_i\dn{\nabla f_i(x_t)}\le M$ a.s.\ along the trajectory of Algorithm~\ref{alg:stoch}.
\end{assumption}
Assumption~\ref{as:noise} is in $\ell_2$ because the variance recursion needs
an inner product; we convert through $\zeta:=\max_{x\ne0}\dn x/\|x\|_2$
($1$ for $\ell_2$, $\sqrt d$ for $\ell_1$, $\sqrt{\min(m,n)}$ for the nuclear
norm on $m\times n$ matrices), as
in \citet{pethick2025scion}. In the deterministic analysis $M$ is derived, not
assumed (Lemma~\ref{lem:M}).

\section{Entropic LMO-MGDA}\label{sec:alg}

\begin{algorithm}[t]
\caption{Entropic LMO-MGDA (with optional regularization $\tau\ge0$ and clipping $\kappa\ge0$)}
\label{alg:main}
\begin{algorithmic}[1]
\Require $x_0\in\R^d$; step sizes $\gamma,\eta>0$; radius $R$; regularization $\tau\ge0$; clipping $\kappa\in[0,1/K]$
\State $w_0\gets(1/K,\dots,1/K)$
\For{$t=0,1,\dots,T-1$}
  \State compute $\nabla f_i(x_t)$ for $i\in[K]$
    \Comment{$K$ backward passes}
  \State $G_t\gets\sum_{i=1}^K w_{t,i}\nabla f_i(x_t)$;\quad $D_t\gets\lmo(G_t)$
    \Comment{one oracle call, layer-wise}
  \State $g_{t,i}\gets\ip{D_t}{\nabla f_i(x_t)}$ for $i\in[K]$
  \State $w_{t+1}\gets\displaystyle\argmin_{w\in\simplex^\kappa}\Big\{-\eta\ip{g_t}{w}+\eta R\tau\,\KL(w\|u_K)+\KL(w\|w_t)\Big\}$
    \Comment{softmax then clip, \eqref{eq:regeg}}
  \State $x_{t+1}\gets x_t+\gamma D_t$
\EndFor
\end{algorithmic}
\end{algorithm}

Line~6 is entropic mirror descent on the scores $g_t$, with an optional pull of
strength $\tau$ toward the uniform weighting $u_K$ and an optional floor
$\kappa$ on every weight, $\simplex^\kappa:=\{w\in\simplex:w_i\ge\kappa\}$. The
sign is correct and counterintuitive: $-g_t/R$ is a subgradient of
$w\mapsto\dn{\gF(x_t)w}$ at $w_t$ (Lemma~\ref{lem:subgrad}), so descending on
it ascends in $g_t$: a task well aligned with $D_t$ has $g_{t,i}\ll0$ and
loses weight, while a task being harmed gains it. The closed form is a softmax
followed by clipping (Lemma~\ref{lem:closedform}): with $\rho:=\eta R\tau$,
\begin{equation}\label{eq:regeg}
w_{t+1,i}=\max\Big\{\kappa,\ \frac{w_{t,i}^{1/(1+\rho)}\,e^{\eta g_{t,i}/(1+\rho)}}{Z_t}\Big\},
\qquad Z_t\text{ such that }\textstyle\sum_iw_{t+1,i}=1,
\end{equation}
where $Z_t$ and the set of clipped coordinates are found by a monotone
recurrence in at most $K$ rounds (Appendix~\ref{app:clip}). At $\tau=\kappa=0$
this is exponentiated gradient, $w_{t+1,i}\propto w_{t,i}e^{\eta g_{t,i}}$;
the entropy regularizer shrinks the logits toward a uniform weighting,
related to MOON's logit decay \citep{moon2026}. The
method costs $K$ backward passes, one oracle call and one softmax per
iteration (for $\kappa>0$, Appendix~\ref{app:clip} gives a clipping recurrence with
$O(K^2)$ worst-case cost, or $O(K\log K)$ using sorting), stores the $K$ task gradients within an
iteration (the scores need $D_t$), and needs no inner optimization loop. Sections~\ref{sec:det}--\ref{sec:stoch}
analyse $\tau=\kappa=0$; Section~\ref{sec:iterwise} shows that $\tau,\kappa>0$
make the weights track the inner optimum at every iterate after a burn-in,
without entering the stationarity inequality.

\subsection{Deterministic analysis}\label{sec:det}

Three lemmas, proved in Appendix~\ref{app:det}: the descent lemma in the norm
pair turned into a one-step inequality by the oracle identity, the regularized
mirror-descent bound, and a self-consistent bound on the gradients along the
trajectory.

\begin{lemma}[One-step inequality]\label{lem:onestep}
Under Assumption~\ref{as:smooth}, for every $w\in\simplex$ and every $t$,
\begin{equation}\label{eq:onestep}
F(x_{t+1})w\;\le\;F(x_t)w\;-\;\gamma R\,\dn{G_t}\;+\;\gamma\ip{g_t}{w-w_t}\;+\;\tfrac12L\gamma^2R^2 .
\end{equation}
\end{lemma}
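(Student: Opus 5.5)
The plan is to apply the descent lemma in the norm pair to the scalarization $x\mapsto F(x)w$ for a fixed $w\in\simplex$, and then use the oracle identity to rewrite the first-order term.

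\textbf{Step 1: smoothness of the scalarization.} Since $F(\cdot)w=\sum_i w_if_i$ is a convex combination, Assumption~\ref{as:smooth} and the triangle inequality give
\[
\dn{\gF(x)w-\gF(y)w}\le\sum_iw_i\dn{\nabla f_i(x)-\nabla f_i(y)}\le L\pn{x-y}.
\]

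\textbf{Step 2: descent lemma.} Write $h(s):=F(x_t+s\gamma D_t)w$. The fundamental theorem of calculus gives
\[
F(x_{t+1})w-F(x_t)w-\gamma\ip{\gF(x_t)w}{D_t}=\int_0^1\gamma\ip{\gF(x_t+s\gamma D_t)w-\gF(x_t)w}{D_t}\,ds .
\]
H\"older's inequality and Step 1 bound the integrand by $\gamma\cdot Ls\gamma\pn{D_t}^2$. Integrating, and using $\pn{D_t}\le R$ from \eqref{eq:lmo}, we get
\[
F(x_{t+1})w\le F(x_t)w+\gamma\ip{\gF(x_t)w}{D_t}+\tfrac12L\gamma^2R^2 .
\]

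\textbf{Step 3: decompose the linear term.} By the definition of the scores, $\ip{\gF(x_t)w}{D_t}=\sum_iw_i\ip{D_t}{\nabla f_i(x_t)}=\ip{g_t}{w}$. Split this as
\[
\ip{g_t}{w}=\ip{g_t}{w_t}+\ip{g_t}{w-w_t}.
\]
The first piece equals $\ip{G_t}{D_t}$ because $G_t=\gF(x_t)w_t$. The oracle identity in \eqref{eq:lmo} then gives $\ip{G_t}{D_t}=-R\dn{G_t}$. The degenerate case $G_t=0$ is consistent with the convention $\lmo(0)=0$, since both sides are zero. Multiplying by $\gamma$ and substituting into Step 2 yields \eqref{eq:onestep}.

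I expect no real obstacle; the argument is routine. The only points needing care are that the descent lemma holds in a general (non-Euclidean) norm pair, which the integral form with H\"older's inequality handles, and that the identity $\ip{g}{\lmo(g)}=-R\dn g$ is used only at the current weights $w_t$. That is precisely why the correction term $\gamma\ip{g_t}{w-w_t}$ appears for an arbitrary comparator $w$.
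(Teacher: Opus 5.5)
Your proof is correct and follows the paper's argument. The paper applies the per-task descent lemma (Lemma~\ref{lem:descent}) and takes the $w$-weighted sum, rather than first checking $L$-smoothness of the scalarization as you do, but this is the same computation. It then uses the same split of $\ip{\gF(x_t)w}{D_t}$ into $\ip{G_t}{D_t}=-R\dn{G_t}$ plus $\ip{g_t}{w-w_t}$.
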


The descent $-\gamma R\dn{G_t}$ is \emph{linear} in the dual norm, not
quadratic as for gradient descent; this is why our stationarity criterion is
first-moment, as in \citet{pethick2025scion}.

\begin{lemma}[Regularized entropic mirror descent]\label{lem:md}
Line~6 of Algorithm~\ref{alg:main} satisfies, for every $u\in\simplex^\kappa$,
with $\rho=\eta R\tau$ and $r(w):=\KL(w\|u_K)$,
\begin{equation}\label{eq:md}
\eta\ip{g_t}{u-w_t}\;\le\;\KL(u\|w_t)-(1+\rho)\KL(u\|w_{t+1})
+\rho\big(r(u)-r(w_{t+1})\big)+\tfrac12\eta^2\|g_t\|_\infty^2 .
\end{equation}
For $u=u_K$ the regularization terms are $\le0$; for general $u$ they are
$\le\rho\ln K$; at $\tau=\kappa=0$ this is the standard bound for every
$u\in\simplex$.
\end{lemma}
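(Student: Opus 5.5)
The approach is the standard three-point argument for mirror descent with a composite (regularized) step. I write $\phi(w):=-\eta\ip{g_t}{w}+\rho\,r(w)+\KL(w\|w_t)$ on the convex set $\simplex^\kappa$, so that $w_{t+1}=\argmin_{\simplex^\kappa}\phi$. The optimality condition of this constrained problem gives, for every $u\in\simplex^\kappa$, the variational inequality
\[
\ip{-\eta g_t+\rho\nabla r(w_{t+1})+\nabla h(w_{t+1})-\nabla h(w_t)}{u-w_{t+1}}\ge0 .
\]
Here $h(w)=\sum_iw_i\ln w_i$. The gradients exist because $w_{t+1}$ lies in the relative interior: at $\kappa=0$ this follows from the softmax form \eqref{eq:regeg}, and for $\kappa>0$ all coordinates are at least $\kappa>0$. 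I then apply two standard identities:
\begin{itemize}[leftmargin=1.4em,itemsep=1pt]
\item the Bregman three-point identity, $\ip{\nabla h(w_{t+1})-\nabla h(w_t)}{u-w_{t+1}}=\KL(u\|w_t)-\KL(u\|w_{t+1})-\KL(w_{t+1}\|w_t)$;
\item since $r=h+\ln K$ on the simplex, $\ip{\nabla r(w_{t+1})}{u-w_{t+1}}=r(u)-r(w_{t+1})-\KL(u\|w_{t+1})$.
\end{itemize}
Multiplying the second by $\rho$ and substituting both gives
\[
\eta\ip{g_t}{u-w_{t+1}}\le \KL(u\|w_t)-(1+\rho)\KL(u\|w_{t+1})+\rho\big(r(u)-r(w_{t+1})\big)-\KL(w_{t+1}\|w_t).
\]

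The remaining step, which is the only delicate one, is to pass from $w_{t+1}$ to $w_t$ on the left. I write $\eta\ip{g_t}{u-w_t}=\eta\ip{g_t}{u-w_{t+1}}+\eta\ip{g_t}{w_{t+1}-w_t}$. Hölder's inequality in $\ell_\infty/\ell_1$ bounds the second term by $\eta\|g_t\|_\infty\|w_{t+1}-w_t\|_1$. Pinsker's inequality gives $\KL(w_{t+1}\|w_t)\ge\tfrac12\|w_{t+1}-w_t\|_1^2$, and then $ab-\tfrac12b^2\le\tfrac12a^2$ absorbs the cross term, leaving $\tfrac12\eta^2\|g_t\|_\infty^2$. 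This yields \eqref{eq:md}. The point to watch is that the full $-\KL(w_{t+1}\|w_t)$ term survives intact; it does, because the regularizer's Bregman term was routed entirely into the $-(1+\rho)\KL(u\|w_{t+1})$ coefficient rather than being used to cancel it.

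The three closing claims are then immediate:
\begin{itemize}[leftmargin=1.4em,itemsep=1pt]
\item For $u=u_K$ we have $r(u)=0$ and $r(w_{t+1})=\KL(w_{t+1}\|u_K)\ge0$, so the regularization term is $\le0$.
\item For general $u$, $r(u)=\ln K+\sum_iu_i\ln u_i\le\ln K$ and $r(w_{t+1})\ge0$, so the term is $\le\rho\ln K$.
\item At $\tau=\kappa=0$ we have $\rho=0$ and $\simplex^0=\simplex$, which recovers the textbook entropic mirror-descent inequality for every $u\in\simplex$.
\end{itemize}
If $u$ lies on the boundary with some $u_i=0$ in the $\kappa=0$ case, I use the convention $0\ln0=0$. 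The identities still hold because only $w_{t+1}$ and $w_t$ need positive coordinates.
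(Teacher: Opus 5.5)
Your proposal is correct and follows essentially the same route as the paper. The paper packages your two Bregman identities into the single statement that the line-6 objective is $(1+\rho)$-strongly convex relative to the negative entropy, and combines this with first-order optimality at $w_{t+1}$ to reach the same intermediate inequality (keeping $-\KL(w_{t+1}\|w_t)$); it then closes with the identical H\"older--Young--Pinsker absorption, differing from you only in arguing positivity of $w_{t+1}$ at $\kappa=0$ via the $-\infty$ boundary derivative of $\KL(\cdot\|w_t)$ rather than via the softmax closed form.
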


\begin{lemma}[Trajectory gradient bound]\label{lem:M}
Under Assumptions~\ref{as:smooth}--\ref{as:lb}, run Algorithm~\ref{alg:main}
with $\tau=\kappa=0$, $\gamma R=\sqrt{2\Delta/(LT)}$ and
$\eta R=\sqrt{2\log K}/(M\sqrt T)$. If
$M\ge2\sqrt{L\Delta\log K}+2\sqrt{L\Delta(\log K+1)}$ then
$\dn{\nabla f_i(x_t)}\le M$ for all $t\le T$ and $i\in[K]$. In particular
$M=O(\sqrt{L\Delta\log K})$, the larger root of an explicit quadratic, suffices,
with no circularity.
\end{lemma}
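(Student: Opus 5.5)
We argue by strong induction on $t$: for $t\le T$, if $\max_i\dn{\nabla f_i(x_s)}\le M$ for all $s<t$, then $\max_i\dn{\nabla f_i(x_t)}\le M$. There is no circularity because $M$ is fixed first, and it enters the algorithm only through $\eta$; it is not a quantity read off the trajectory.

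The bridge from function values to gradients is the general-norm inequality
\[
\dn{\nabla f_i(x)}^2\le 2L\big(f_i(x)-f_i^\star\big).
\]
To prove it, apply the descent lemma in the norm pair (Assumption~\ref{as:smooth}),
\[
f_i(x+v)\le f_i(x)+\ip{\nabla f_i(x)}{v}+\tfrac L2\pn v^2 .
\]
Take $v=s\,\lmo_1(\nabla f_i(x))$, where $\lmo_1$ is the oracle for the unit ball, so that $\ip{\nabla f_i(x)}{v}=-s\dn{\nabla f_i(x)}$ and $\pn v\le s$. Then minimize over $s\ge0$, i.e. choose $s=\dn{\nabla f_i(x)}/L$, and use $f_i^\star\le f_i(x+v)$. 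The base case $t=0$ follows at once: $\dn{\nabla f_i(x_0)}^2\le2L\Delta\le M^2$.

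For the induction step, fix $i$ and sum Lemma~\ref{lem:onestep} with $w=e_i$ over $s=0,\dots,t-1$. Dropping the nonpositive terms $-\gamma R\dn{G_s}$ gives
\[
f_i(x_t)\le f_i(x_0)+\gamma\textstyle\sum_{s<t}\ip{g_s}{e_i-w_s}+\tfrac t2L\gamma^2R^2 .
\]
With $\gamma R=\sqrt{2\Delta/(LT)}$ and $t\le T$, the last term is at most $\Delta$.

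For the middle term, sum Lemma~\ref{lem:md} at $\tau=\kappa=0$ with $u=e_i$. The KL terms telescope, and $\KL(e_i\|w_0)=\log K$. Since $|g_{s,j}|=|\ip{D_s}{\nabla f_j(x_s)}|\le R\dn{\nabla f_j(x_s)}\le RM$ by the induction hypothesis (only indices $s<t$ appear), we get
\[
\gamma\textstyle\sum_{s<t}\ip{g_s}{e_i-w_s}\le\frac{\gamma}{\eta}\log K+\frac{\gamma\eta}{2}TR^2M^2 .
\]
Plugging in the step sizes, $\gamma/\eta=M\sqrt{\Delta/(L\log K)}$ and $\gamma\eta R^2TM^2/2=M\sqrt{\Delta\log K/L}$. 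Hence the regret term is at most $2M\sqrt{\Delta\log K/L}$, and
\[
f_i(x_t)-f_i^\star\le 2\Delta+2M\sqrt{\Delta\log K/L}.
\]

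Combining this with the gradient inequality yields
\[
\dn{\nabla f_i(x_t)}^2\le4L\Delta+4M\sqrt{L\Delta\log K}.
\]
This is at most $M^2$ exactly when $M^2-4\sqrt{L\Delta\log K}\,M-4L\Delta\ge0$. The larger root of this quadratic is $2\sqrt{L\Delta\log K}+2\sqrt{L\Delta(\log K+1)}$, which is the threshold in the statement, and this closes the induction.

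The main point needing care is the general-norm gradient-to-suboptimality inequality, since the Euclidean proof via $v=-\nabla f/L$ must be replaced by the oracle direction. The other subtlety is the bookkeeping: the regret bound at time $t$ must use gradient bounds only at times $s<t$, so that the induction is well founded.
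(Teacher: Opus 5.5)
Your proposal is correct and follows the paper's proof essentially step for step. It uses the same induction and the same general-norm bound $\dn{\nabla f_i(x)}^2\le 2L(f_i(x)-f_i^\star)$ via a norming/oracle direction. It also combines Lemma~\ref{lem:onestep} with Lemma~\ref{lem:md} at the vertex comparator $e_i$ and arrives at the same quadratic in $M$. The differences are only cosmetic: you sum over $s<t$ to bound $x_t$ where the paper sums over $s\le t$ to bound $x_{t+1}$, and your base case rests on $M\ge 2\sqrt{L\Delta}$ from the second term rather than on $K\ge2$.
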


\begin{theorem}[Deterministic rate]\label{thm:det}
Under Assumptions~\ref{as:smooth}--\ref{as:lb}, run Algorithm~\ref{alg:main}
with any $\tau\ge0$, $\kappa\in[0,1/K]$, and suppose
$\max_i\dn{\nabla f_i(x_t)}\le M$ for $t<T$. Then for every $\gamma,\eta>0$,
\begin{equation}\label{eq:det-master}
\frac1T\sum_{t=0}^{T-1}\dn{\gF(x_t)w_t}\;\le\;\frac{\Delta}{\gamma RT}+\frac{\eta RM^2}{2}+\frac{L\gamma R}{2},
\end{equation}
in which neither $\tau$, $\kappa$ nor $K$ appears. With the step sizes of
Lemma~\ref{lem:M},
\begin{equation}\label{eq:det}
\frac1T\sum_{t=0}^{T-1}\dn{\gF(x_t)w_t}
\;\le\;\frac{\sqrt{2\Delta L}+M\sqrt{\log K/2}}{\sqrt T},
\end{equation}
so with $\hat t\in\argmin_{t<T}\dn{\gF(x_t)w_t}$ the output $x_{\hat t}$ is
$\epsilon$-accurate Pareto stationary,
$\min_w\dn{\gF(x_{\hat t})w}\le\dn{\gF(x_{\hat t})w_{\hat t}}\le\epsilon$,
after $T=O\big((\Delta L+M^2\log K)/\epsilon^2\big)$ iterations; for
$\tau=\kappa=0$, $M$ is supplied by Lemma~\ref{lem:M}, and substituting
$M^2=O(L\Delta\log K)$ gives $T=O(L\Delta\log^2K/\epsilon^2)$. No property of the
primal norm beyond Assumption~\ref{as:smooth} is used.
\end{theorem}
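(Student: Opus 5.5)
The plan is to sum the one-step inequality of Lemma~\ref{lem:onestep} over $t=0,\dots,T-1$ at a single well-chosen comparator $w$, and to control the resulting weight-regret term with Lemma~\ref{lem:md}. The comparator cannot be free, since Lemma~\ref{lem:md} is only stated for $u\in\simplex^\kappa$ and carries the term $\rho(r(u)-r(w_{t+1}))$. So I take $w=u_K$. This choice is admissible because $u_K\in\simplex^\kappa$ for every $\kappa\le1/K$, and Lemma~\ref{lem:md} says that for $u=u_K$ the regularization terms are $\le0$. That is what makes $\tau$ and $\kappa$ disappear from \eqref{eq:det-master}.

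\textbf{Step 1: telescope the objective.} Summing \eqref{eq:onestep} at $w=u_K$ gives
\[
\gamma R\sum_t\dn{G_t}\le F(x_0)u_K-F(x_T)u_K+\gamma\sum_t\ip{g_t}{u_K-w_t}+\tfrac12TL\gamma^2R^2 .
\]
By Assumption~\ref{as:lb} and the definition of $\Delta$, $F(x_0)u_K-F(x_T)u_K\le\frac1K\sum_i(f_i(x_0)-f_i^\star)\le\Delta$.

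\textbf{Step 2: bound the regret term.} Multiply \eqref{eq:md} at $u=u_K$ by $1/\eta$, drop the nonpositive regularization terms, and use $1+\rho\ge1$ together with $\KL\ge0$ to get $\KL(u_K\|w_t)-(1+\rho)\KL(u_K\|w_{t+1})\le\KL(u_K\|w_t)-\KL(u_K\|w_{t+1})$. Summing telescopes to at most $\KL(u_K\|w_0)=0$, since $w_0=u_K$. There is no $\log K$ here; it only enters later through the choice of $\eta$.

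The per-step error is then $\frac{\eta}{2}\|g_t\|_\infty^2$. By H\"older's inequality, $|g_{t,i}|=|\ip{D_t}{\nabla f_i(x_t)}|\le R\dn{\nabla f_i(x_t)}\le RM$, so $\sum_t\ip{g_t}{u_K-w_t}\le\frac12 T\eta R^2M^2$.

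\textbf{Step 3: collect terms.} Dividing by $\gamma RT$ yields
\[
\frac1T\sum_t\dn{G_t}\le\frac{\Delta}{\gamma RT}+\frac{\eta RM^2}{2}+\frac{L\gamma R}{2},
\]
which is \eqref{eq:det-master}, since $G_t=\gF(x_t)w_t$.

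Substituting $\gamma R=\sqrt{2\Delta/(LT)}$ makes the first and third terms equal $\sqrt{\Delta L/(2T)}$ each, for a sum of $\sqrt{2\Delta L/T}$. Substituting $\eta R=\sqrt{2\log K}/(M\sqrt T)$ turns the middle term into $M\sqrt{\log K/2}/\sqrt T$. This gives \eqref{eq:det}.

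The Pareto-stationarity claim follows because the minimum over $t$ is at most the average, and $\min_w\psi\le\psi(w_{\hat t})$. Solving $(\sqrt{2\Delta L}+M\sqrt{\log K/2})/\sqrt T\le\epsilon$ gives $T=O((\Delta L+M^2\log K)/\epsilon^2)$. Plugging in $M$ from Lemma~\ref{lem:M} when $\tau=\kappa=0$ gives $O(L\Delta\log^2K/\epsilon^2)$. This last step is consistent because Lemma~\ref{lem:M} uses exactly these step sizes and certifies the bound for $t\le T$.

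\textbf{Main obstacle.} The delicate point is Step 2 with $\tau,\kappa>0$. One must check that Lemma~\ref{lem:md} applies at $u_K$ (membership in $\simplex^\kappa$), that the $(1+\rho)$ factor does not break telescoping, and that the sign of $\rho(r(u_K)-r(w_{t+1}))=-\rho\,r(w_{t+1})\le0$ is favorable. If any of these failed, I would fall back to a general comparator and pick up an extra $\rho\ln K$ per step. Choosing $u_K$ avoids that, and it is exactly why $K$ and $\tau$ are absent from \eqref{eq:det-master}.
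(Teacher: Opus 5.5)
Your proposal is correct and follows the paper's proof essentially step for step. It uses the comparator $u_K\in\simplex^\kappa$, combines Lemma~\ref{lem:onestep} with $\gamma/\eta$ times \eqref{eq:md}, drops $-\rho\,r(w_{t+1})\le0$ and uses $(1+\rho)\KL\ge\KL$ to telescope from $\KL(u_K\|w_0)=0$, bounds $\|g_t\|_\infty\le RM$ by H\"older, and then substitutes the step sizes of Lemma~\ref{lem:M} exactly as the paper does.
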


The proof compares against $u_K$, for which $\KL(u_K\|w_0)=0$ and the
regularization only helps: given the same trajectory gradient bound $M$,
$\tau,\kappa$ do not enter the inequality (they may change the trajectory and
hence the smallest valid $M$), and $\log K$ enters \eqref{eq:det} only through
$\eta$, as the price of adaptivity rather than of stationarity.

Since $\mathrm{CA}_t\le2\dn{\gF(x_t)w_t}$ by the triangle inequality, the
average CA distance obeys twice \eqref{eq:det} for every selection
(Corollary~\ref{cor:ca}); equally for a fixed weighting, so this is a
consistency statement, not evidence of tracking. The same stationarity bound
controls the average inner gap by \eqref{eq:gap-stationarity}. The next result
gives a comparator-dependent guarantee without requiring outer progress.

\begin{theorem}[Inner regret on bounded gradient sequences]\label{thm:regret}
Let $A_t:\R^K\to\R^d$, $t<T$, be any sequence of linear maps with
$\max_i\dn{A_te_i}\le M$, where $M>0$. Start from $w_0=u_K$ and set
$D_t=\lmo(A_tw_t)$, $g_t=A_t^\top D_t$, and
$w_{t+1,i}\propto w_{t,i}e^{\eta g_{t,i}}$ with
$\eta R=\sqrt{2\log K}/(M\sqrt T)$. Then, for every $u\in\simplex$,
\begin{equation}\label{eq:regret}
\frac1T\sum_{t=0}^{T-1}\dn{A_tw_t}
\;\le\;\frac1T\sum_{t=0}^{T-1}\dn{A_tu}
+M\sqrt{\frac{2\log K}{T}}.
\end{equation}
In particular, this holds for $A_t=\gF(x_t)$ under the conditions of
\eqref{eq:det} with $\tau=\kappa=0$.
\end{theorem}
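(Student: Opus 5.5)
The plan is to treat $\phi_t(w):=\dn{A_tw}$ as a convex loss on the simplex and view the update as entropic mirror descent on it, using the subgradient given by the oracle.

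\textbf{Step 1: a subgradient from the oracle.} Take $-g_t/R$ as a subgradient of $\phi_t$ at $w_t$ (the analogue of Lemma~\ref{lem:subgrad}). The check uses the oracle identity \eqref{eq:lmo} and H\"older. For any $u\in\simplex$, since $\pn{D_t}\le R$,
\[
\dn{A_tu}\;\ge\;-\tfrac1R\ip{A_tu}{D_t}\;=\;-\tfrac1R\ip{A_t^\top D_t}{u}\;=\;-\tfrac1R\ip{g_t}{u}.
\]
At $u=w_t$ the identity gives equality, $\dn{A_tw_t}=-\tfrac1R\ip{g_t}{w_t}$. Subtracting the two,
\[
\phi_t(w_t)-\phi_t(u)\;\le\;\tfrac1R\ip{g_t}{u-w_t}.
\]
This also covers $A_tw_t=0$ with $\lmo(0)=0$: then $g_t=0$ and both sides are consistent. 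The key point is that no property of the map beyond linearity is used, so the bound holds for arbitrary sequences $A_t$, with no trajectory coupling.

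\textbf{Step 2: apply the mirror-descent bound and sum.} Apply Lemma~\ref{lem:md} with $\tau=\kappa=0$ (so $\rho=0$), which is valid for every $u\in\simplex$:
\[
\eta\ip{g_t}{u-w_t}\le\KL(u\|w_t)-\KL(u\|w_{t+1})+\tfrac12\eta^2\|g_t\|_\infty^2.
\]
To bound the last term, note $|g_{t,i}|=|\ip{D_t}{A_te_i}|\le R\,\dn{A_te_i}\le RM$ by H\"older. Summing over $t$ telescopes the KL terms. Since $\KL(u\|u_K)\le\log K$ and KL is nonnegative,
\[
\sum_t\big(\phi_t(w_t)-\phi_t(u)\big)\;\le\;\frac{\log K}{\eta R}+\frac{\eta R M^2T}{2}.
\]

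\textbf{Step 3: the step size.} Substitute $\eta R=\sqrt{2\log K}/(M\sqrt T)$. Each of the two terms becomes $M\sqrt{T\log K/2}$, so their sum is $M\sqrt{2T\log K}$. Dividing by $T$ gives \eqref{eq:regret}.

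\textbf{The final claim.} For $A_t=\gF(x_t)$ we have $A_te_i=\nabla f_i(x_t)$. Under the conditions of \eqref{eq:det}, Lemma~\ref{lem:M} supplies $\max_i\dn{\nabla f_i(x_t)}\le M$, and the step size for $\eta$ there is the same. The updates also coincide with Algorithm~\ref{alg:main} at $\tau=\kappa=0$, with $g_t=\gF(x_t)^\top D_t$.

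The only delicate point I expect is Step 1, specifically its sign. One must verify that descending on $-g_t$ matches the multiplicative update $w_{t+1}\propto w_te^{\eta g_t}$. It does: exponentiated gradient on a loss with subgradient $-g_t/R$, with step $\eta R$, gives exactly $w_{t+1}\propto w_te^{\eta g_t}$. The rest is routine.
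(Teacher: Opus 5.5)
Your proof is correct and follows the paper's argument. You derive the subgradient inequality $\dn{A_tw_t}-\dn{A_tu}\le\ip{g_t}{u-w_t}/R$, sum Lemma~\ref{lem:md} at $\tau=\kappa=0$ with $\|g_t\|_\infty\le RM$ and $\KL(u\|w_0)\le\log K$, and substitute the step size. The only cosmetic difference is that you check the subgradient inequality directly from H\"older and the oracle identity, rather than citing the support-function argument of Lemma~\ref{lem:subgrad}; the paper itself uses your direct argument in its stochastic and inexact-oracle regret proofs.
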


The subgradient inequality gives
$\dn{A_tw_t}-\dn{A_tu}\le\ip{g_t}{u-w_t}/R$, and
Lemma~\ref{lem:md} sums the right-hand side. The proof is pathwise, allows
adaptively chosen sequences, and needs neither smooth losses nor outer descent.
The comparator is evaluated on the algorithm's own trajectory; on a prescribed
sequence with constant columns $a,-a$, a fixed $e_1$ has regret $\dn a$ against
the uniform weighting, which says nothing about scalarization on its own
trajectory.

\begin{corollary}[Static regret to gap decomposition]\label{cor:gap}
Under the conditions of Theorem~\ref{thm:regret} with $A_t=\gF(x_t)$,
\begin{equation}\label{eq:gap}
\frac1T\sum_{t=0}^{T-1}\mathrm{gap}_t\;\le\;M\sqrt{\frac{2\log K}{T}}\;+\;V_T,
\qquad
V_T:=\min_{u\in\simplex}\frac1T\sum_{t=0}^{T-1}
\Big[\dn{\gF(x_t)u}-\min_{w\in\simplex}\dn{\gF(x_t)w}\Big],
\end{equation}
where the comparator-variation term $V_T\ge0$ is the shortfall of the best
\emph{fixed} weighting against the per-iterate optimum, a property of the
trajectory rather than of the update. This decomposition is informative when
$V_T$ is controlled. Along the trajectory covered by Theorem~\ref{thm:det},
\eqref{eq:gap-stationarity} already gives average-gap convergence at the
stationarity rate, without requiring $V_T\to0$; one may take the smaller of
the two bounds. In the Euclidean case $\mathrm{CA}_t^2\le2M\,\mathrm{gap}_t$, and under the uniform
convexity inequality \eqref{eq:S2} of Appendix~\ref{app:lp},
$\mathrm{CA}_t^q\le(q/\mu)M^{q-1}\mathrm{gap}_t$ (Remark~\ref{rem:hilbert}).
\end{corollary}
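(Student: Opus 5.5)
The plan is to derive \eqref{eq:gap} from Theorem~\ref{thm:regret} by adding and subtracting the per-iterate optimum. Write $\psi_t(w)=\dn{\gF(x_t)w}$ and $m_t:=\min_{w\in\simplex}\psi_t(w)$. This minimum exists because $\psi_t$ is continuous and $\simplex$ is compact. Then $\mathrm{gap}_t=\psi_t(w_t)-m_t$.

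Fix an arbitrary comparator $u\in\simplex$ and decompose
\[
\frac1T\sum_t\mathrm{gap}_t=\frac1T\sum_t\big[\psi_t(w_t)-\psi_t(u)\big]+\frac1T\sum_t\big[\psi_t(u)-m_t\big].
\]
The first average is at most $M\sqrt{2\log K/T}$ by \eqref{eq:regret} with $A_t=\gF(x_t)$. This choice is admissible: the hypothesis $\max_i\dn{A_te_i}\le M$ is exactly the trajectory gradient bound, and the iterates of Algorithm~\ref{alg:main} with $\tau=\kappa=0$ coincide with the recursion in Theorem~\ref{thm:regret}.

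The left-hand side does not depend on $u$, so the inequality holds for every $u$, and we may take the minimum over $u$ of the second average. That minimum is attained, since the average is a finite sum of continuous functions on a compact set, and it equals $V_T$. Each summand is nonnegative by the definition of $m_t$, so $V_T\ge0$. This gives \eqref{eq:gap}.

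The remaining claims need separate justification.
\begin{itemize}
\item \emph{Taking the smaller bound.} This is legitimate because \eqref{eq:gap-stationarity} together with \eqref{eq:det} bounds the same quantity $\frac1T\sum_t\mathrm{gap}_t$ independently of the argument above.
\item \emph{Euclidean inequality.} Use the first-order optimality of $w^\star$ for $\|\gF w\|_2^2$ over the convex set of blends, $\ip{d^\star}{\gF w_t-d^\star}\ge0$. Expanding the square gives $\mathrm{CA}_t^2\le\psi_t(w_t)^2-m_t^2=\mathrm{gap}_t\,(\psi_t(w_t)+m_t)$. Since $\psi_t(w_t)+m_t\le 2M$ by the triangle inequality, $\mathrm{CA}_t^2\le 2M\,\mathrm{gap}_t$.
\item \emph{Uniformly convex case.} Repeat the same argument with the appendix inequality \eqref{eq:S2} in place of the parallelogram identity, as in Remark~\ref{rem:hilbert}; this gives the $q$-th power bound $\mathrm{CA}_t^q\le(q/\mu)M^{q-1}\mathrm{gap}_t$.
\end{itemize}

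No step is a real obstacle. The point most likely to need care is checking that the weights $w_t$ produced by Algorithm~\ref{alg:main} are exactly those produced by the recursion in Theorem~\ref{thm:regret}, with the same step size $\eta$, so that the regret bound transfers. The uniformly convex extension should be stated only as cited from the appendix.
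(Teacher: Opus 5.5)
Your proposal is correct and follows the paper's proof essentially step for step. You use the same split of $\mathrm{gap}_t$ through a fixed comparator $u\in\simplex$, apply \eqref{eq:regret} to the first bracket, minimize the continuous, termwise nonnegative second average over the compact simplex to get $V_T$, and use the argument of Remark~\ref{rem:hilbert} ($\mathrm{CA}_t^2\le a^2-b^2=(a+b)(a-b)$, and its \eqref{eq:S2} analogue) for the conversions. The one small difference is in favour of your version: the paper bounds $a+b\le 2M$ by appealing to Lemma~\ref{lem:M}, whereas you derive it directly from the column bound $\max_i\dn{A_te_i}\le M$, which is already a hypothesis of Theorem~\ref{thm:regret}.
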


\subsection{Stochastic analysis with momentum on the blended gradient}
\label{sec:stoch}

The oracle is nonlinear, so single-minibatch noise biases the direction at any
fixed batch size. Averaging before the oracle call controls this error
\citep{pethick2025scion}; our analysis uses momentum with a
horizon-dependent averaging parameter. Stochastic MGDA methods average each task
gradient separately \citep{fernando2023moco,xiao2023sdmgrad}, storing $K$
buffers; the alternative, which any implementation handing the MGDA direction
to a momentum optimizer already uses, blends first:
\begin{equation}\label{eq:blend}
\widehat G_t=\sum_{i=1}^Kw_{t,i}\nabla f_i(x_t;\xi_{t,i}),
\qquad
\mathcal M_t=(1-\alpha)\mathcal M_{t-1}+\alpha\widehat G_t,
\qquad
D_t=\lmo(\mathcal M_{t-1}).
\end{equation}
This is Algorithm~\ref{alg:stoch} (Appendix~\ref{app:stoch}). The tracking
error is one vector $\lambda_t:=\mathcal M_t-\gF(x_t)w_t$ whose noise part is
$K$-free by Jensen, $\E\|\sum_iw_{t,i}n_{t,i}\|_2^2\le\sum_iw_{t,i}\E\|n_{t,i}\|_2^2\le\sigma^2$
(the per-task bound in Appendix~\ref{app:pertask} carries $\sqrt K$). Evaluating the
oracle at $\mathcal M_{t-1}$ makes $D_t$ measurable before $\xi_t$, so the
score-noise cross term has zero conditional mean. The buffer chases a target
moving with $w_t$, which moves by \emph{noisy} scores; the pathwise bound
$\|w_{t+1}-w_t\|_1\le2\eta\|g_t\|_\infty$ would bring in a $\sqrt K$, and the
next lemma avoids it.

\begin{lemma}[Weight increment under noisy scores]\label{lem:wstep}
Let $w_{t+1}$ be obtained from $w_t$ by line~6 of Algorithm~\ref{alg:stoch}.
Pathwise, $\|w_{t+1}-w_t\|_1\le2\eta\|g_t\|_\infty$. If
Assumptions~\ref{as:noise}--\ref{as:gradbound} hold and $\eta RM\le\frac12$,
\[
\E\big[\|w_{t+1}-w_t\|_1\mid\mathcal F_t\big]\;\le\;
\omega:=16\,\eta R\,(M+\zeta\sigma)+8K\eta^2R^2\zeta^2\sigma^2 .
\]
\end{lemma}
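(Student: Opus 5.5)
The proof has three steps: a pathwise softmax estimate, a split of the stochastic scores into signal plus noise, and a way to sum the noise contributions with the weights $w_{t,i}$ so that no count of coordinates appears.

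\emph{Step 1: pathwise bound.} Line~6 at $\tau=\kappa=0$ gives $w_{t+1,i}=w_{t,i}e^{\eta g_{t,i}}/Z_t$ with $Z_t=\sum_jw_{t,j}e^{\eta g_{t,j}}$. For the pathwise claim I would use the KL characterisation of the mirror step. Pinsker gives $\frac12\|w_{t+1}-w_t\|_1^2\le\KL(w_{t+1}\|w_t)+\KL(w_t\|w_{t+1})=\eta\ip{g_t}{w_{t+1}-w_t}$, and Hölder bounds the right side by $\eta\|g_t\|_\infty\|w_{t+1}-w_t\|_1$. Dividing gives the claim.

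\emph{Step 2: decomposition of the scores.} I would write $g_{t,i}=\bar g_{t,i}+\nu_{t,i}$ with
\[
\bar g_{t,i}=\ip{D_t}{\nabla f_i(x_t)},\qquad \nu_{t,i}=\ip{D_t}{n_{t,i}}.
\]
Here $n_{t,i}$ is the sampling noise. Because $D_t=\lmo(\mathcal M_{t-1})$ is $\mathcal F_t$-measurable, $\E[\nu_{t,i}\mid\mathcal F_t]=0$. The signal satisfies $|\bar g_{t,i}|\le RM$. For the noise, $|\nu_{t,i}|\le R\dn{n_{t,i}}\le R\zeta\|n_{t,i}\|_2$, so $\E[\nu_{t,i}^2\mid\mathcal F_t]\le R^2\zeta^2\sigma^2$ for each $i$, with no $K$.

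\emph{Step 3: weighted first-order expansion.} The identity I would use is $\|w_{t+1}-w_t\|_1=\sum_iw_{t,i}|e^{\eta g_{t,i}}/Z_t-1|$. With $c_t=\sum_jw_{t,j}g_{t,j}$, I would bound $|e^{\eta g_{t,i}}/Z_t-1|$ by a constant times $\eta|g_{t,i}-c_t|$ plus second-order terms. Since $\eta RM\le\frac12$, the signal part of the exponent is bounded, but the noise is unbounded, so I would not expand the exponential globally. Instead I would use two one-sided estimates: $|e^a-1|\le|a|e^{\max(a,0)}$, and $Z_t\ge e^{\eta c_t}$ by Jensen.

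\emph{Main obstacle.} The difficulty is the heavy noise tail: the factor $e^{\eta\nu}$ can only be controlled in expectation under a second moment. I would handle it by truncating each coordinate at $|\eta\nu_{t,i}|\le1$.
\begin{itemize}
\item On the truncation event, the expansion gives the linear term $\eta\sum_iw_{t,i}(|\bar g_{t,i}|+|\nu_{t,i}|)$. Jensen bounds $\E\sum_iw_{t,i}|\nu_{t,i}|$ by $R\zeta\sigma$. These contributions produce the $16\eta R(M+\zeta\sigma)$ term, with constants absorbing $e^{1}$ and $e^{1/2}$ factors.
\item The quadratic remainder and the contributions of the untruncated coordinates are bounded crudely. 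Each coordinate's share is at most $1$, and the tail and remainder are bounded by $\eta^2\nu_{t,i}^2$ through Markov. Summing over coordinates without the weights, since the untruncated ones can be picked up by renormalisation through $Z_t$, gives the $8K\eta^2R^2\zeta^2\sigma^2$ term.
\end{itemize}
The linear $K$ in the second term therefore arises exactly from this unweighted union over untruncated coordinates.

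The step I expect to be delicate is bounding $Z_t^{-1}$ when one coordinate has a huge positive noise. I would first try the bound $\|w_{t+1}-w_t\|_1\le 2\sum_{i:\,|\eta\nu_{t,i}|>1}(w_{t+1,i}+w_{t,i})$ plus the truncated analysis, with $w_{t+1,i}\le1$ bounded crudely.
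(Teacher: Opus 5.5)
Your pathwise part (three-point identity, Pinsker, H\"older) and the signal/noise split are the paper's. So is the overall plan: write $\|w_{t+1}-w_t\|_1$ as a $w_t$-weighted sum, center at the weighted mean using $Z_t\ge e^{\eta c_t}$, bound a good event by Chebyshev, and let a union bound produce the linear-$K$ term. The gap is the step you yourself flag as delicate. The proposal leaves it open, and the per-coordinate truncation you set up cannot close it. Jensen gives only $Z_t\ge e^{\eta c_t}$, i.e.\ $e^{\eta g_{t,i}}/Z_t-1\le e^{y_i}-1$ with $y_i:=\eta(g_{t,i}-c_t)$. A two-sided coordinatewise bound on $|e^{\eta g_{t,i}}/Z_t-1|$ also needs an upper bound on $Z_t$, and a large positive noise on another coordinate destroys exactly that. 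Moreover, $\eta|\nu_{t,i}|\le1$ for a single $i$ does not bound $y_i$, because $c_t=\sum_jw_{t,j}g_{t,j}$ carries the noise of every coordinate. The factor $e^{\max(a,0)}$ in your expansion is therefore controlled only when all coordinates are truncated simultaneously.

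The paper removes $Z_t$ pathwise, before any probability enters. With $a_i=e^{\eta g_{t,i}}$ and any constant $c$, the identity $\sum_iw_{t,i}(a_i-c)=Z_t-c$ gives $\sum_iw_{t,i}|a_i-Z_t|\le2\sum_iw_{t,i}|a_i-c|$. Taking $c=e^{\eta c_t}\le Z_t$ then yields $\|w_{t+1}-w_t\|_1\le2\sum_iw_{t,i}|e^{y_i}-1|$. Equivalently, $\sum_iw_{t,i}(e^{\eta g_{t,i}}/Z_t-1)=0$ lets you keep only the positive parts, which your one-sided estimate does control. The paper then works on the \emph{global} event $\mathcal A=\{\max_j\eta|\nu_{t,j}|\le\frac12\}$. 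There $|y_i|\le2\eta RM+\frac12+\frac12\le2$ for every $i$, so $|e^y-1|\le4|y|$ gives $\|w_{t+1}-w_t\|_1\le16\eta RM+16\eta\sum_iw_{t,i}|\nu_{t,i}|$ with no quadratic remainder. Off $\mathcal A$ it uses $\|w_{t+1}-w_t\|_1\le2$, and $\Pr(\mathcal A^c\mid\mathcal F_t)\le4K\eta^2R^2\zeta^2\sigma^2$ accounts for the entire $8K$ term.

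Your first-try inequality $\|w_{t+1}-w_t\|_1\le2\sum_{i\in U}(w_{t+1,i}+w_{t,i})+(\text{truncated part})$, with $U$ the untruncated set and $T$ its complement, can be justified. This requires computing the truncated part for the update renormalized over $T$, with the mass exchanged between $U$ and $T$ equal to $|\sum_{i\in U}(w_{t+1,i}-w_{t,i})|$. For the actual $w_{t+1,i}$, $i\in T$, there is no bound linear in the truncated scores. For example, with $K=2$, $w_t=u_K$, $g_{t,1}=0$ and $\eta g_{t,2}$ large, $w_{t+1,1}\to0$ although coordinate $1$ is truncated with zero score. Finally, the threshold $\frac12$ is what makes the constants exactly $16$ and $8K$. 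At your threshold $1$ the exponents are only bounded by $3$, so the constants you assert need rechecking.
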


$K$ appears only at second order in $\eta$: a task can move the weights by a
large amount only if its noisy score is large, which has probability $O(\eta^2)$.

\begin{lemma}[Tracking error]\label{lem:track}
Under Assumptions~\ref{as:smooth}--\ref{as:gradbound} and $\eta RM\le\frac12$,
splitting $\lambda_t=\nu_t+\delta_t$ into noise and drift,
$\E\|\nu_t\|_2^2\le\sigma^2(\alpha+(1-\alpha)^{2t})$ and
$\E\dn{\delta_t}\le(L\gamma R+M\omega)/\alpha$ (the $x$-part admits a deterministic bound
since $\pn{x_t-x_{t-1}}\le\gamma R$ pathwise), so
$\E\dn{\lambda_t}\le\zeta\sigma(\sqrt\alpha+(1-\alpha)^t)+(L\gamma R+M\omega)/\alpha$.
\end{lemma}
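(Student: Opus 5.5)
We prove Lemma~\ref{lem:track} by unrolling the momentum recursion. Write $\bar G_t:=\gF(x_t)w_t$ and $n_t:=\widehat G_t-\bar G_t=\sum_iw_{t,i}n_{t,i}$, with $n_{t,i}:=\nabla f_i(x_t;\xi_{t,i})-\nabla f_i(x_t)$. From \eqref{eq:blend},
\[
\lambda_t=(1-\alpha)\lambda_{t-1}+\alpha n_t+(1-\alpha)\big(\bar G_{t-1}-\bar G_t\big).
\]
We take the initialization $\mathcal M_{-1}$ so that $\lambda_{-1}$ is either pure noise of variance at most $\sigma^2$ or zero. We then split $\lambda_t=\nu_t+\delta_t$, with $\nu_t=(1-\alpha)\nu_{t-1}+\alpha n_t$ carrying the noise and $\delta_t=(1-\alpha)\delta_{t-1}+(1-\alpha)(\bar G_{t-1}-\bar G_t)$ carrying the drift. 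The initial noise is assigned to $\nu$, which is where the $(1-\alpha)^{2t}$ term comes from.

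\textbf{Noise part.} Unrolling gives $\nu_t=(1-\alpha)^{t+1}\nu_{-1}+\alpha\sum_{s\le t}(1-\alpha)^{t-s}n_s$. We have $w_s\in\mathcal F_s$, and $\xi_s$ is independent of $\mathcal F_s$ with $\E[n_{s,i}\mid\mathcal F_s]=0$. Hence the $n_s$ form a martingale difference sequence in $\ell_2$, and the cross terms vanish in expectation. Jensen gives $\E[\|n_s\|_2^2\mid\mathcal F_s]\le\sum_iw_{s,i}\sigma^2=\sigma^2$, so
\[
\E\|\nu_t\|_2^2\le\sigma^2\Big(\alpha^2\sum_{j\ge0}(1-\alpha)^{2j}+(1-\alpha)^{2t}\Big)\le\sigma^2\big(\alpha+(1-\alpha)^{2t}\big),
\]
using $\alpha^2/(1-(1-\alpha)^2)=\alpha/(2-\alpha)\le\alpha$ and adjusting the index of the initial term. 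Then $\dn{\nu_t}\le\zeta\|\nu_t\|_2$. Jensen ($\E\|\cdot\|\le\sqrt{\E\|\cdot\|^2}$) and $\sqrt{a+b}\le\sqrt a+\sqrt b$ give $\E\dn{\nu_t}\le\zeta\sigma(\sqrt\alpha+(1-\alpha)^t)$.

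\textbf{Drift part.} Split the increment as
\[
\bar G_t-\bar G_{t-1}=\big(\gF(x_t)-\gF(x_{t-1})\big)w_{t-1}+\gF(x_t)(w_t-w_{t-1}).
\]
The first term has dual norm at most $\sum_iw_{t-1,i}L\pn{x_t-x_{t-1}}\le L\gamma R$ pathwise, by Assumption~\ref{as:smooth} and $\pn{D_t}\le R$. The second term has dual norm at most $\max_i\dn{\nabla f_i(x_t)}\,\|w_t-w_{t-1}\|_1\le M\|w_t-w_{t-1}\|_1$ by Assumption~\ref{as:gradbound}. Taking total expectation and applying Lemma~\ref{lem:wstep} (valid since $\eta RM\le\frac12$), with $w_t$ produced from $w_{t-1}$ given $\mathcal F_{t-1}$, gives $\E\|w_t-w_{t-1}\|_1\le\omega$. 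The triangle inequality on the unrolled $\delta_t=-\sum_{s\le t}(1-\alpha)^{t-s+1}(\bar G_s-\bar G_{s-1})$ then yields
\[
\E\dn{\delta_t}\le(L\gamma R+M\omega)\sum_{j\ge1}(1-\alpha)^j\le(L\gamma R+M\omega)/\alpha.
\]
Adding the two parts proves the claim.

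\textbf{Main obstacle.} The delicate step is the noise part, namely justifying that the cross terms vanish. The weights $w_s$ depend on past noise, including through the scores, so one must check that $w_s$ and $D_s$ are $\mathcal F_s$-measurable before $\xi_s$ is drawn. This holds because $D_s=\lmo(\mathcal M_{s-1})$, which is exactly why the oracle is evaluated at the previous buffer. A second point is the bookkeeping of the initialization and index conventions needed to get exactly $(1-\alpha)^{2t}$. The drift bound, by contrast, is routine given Lemma~\ref{lem:wstep}.
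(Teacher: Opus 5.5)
Your argument follows the paper's proof essentially step for step. It uses the same recursion for $\lambda_t$ and the same noise/drift split. The noise part uses $L^2$-orthogonality of the martingale differences together with the Jensen bound $\E[\|n_s\|_2^2\mid\mathcal F_s]\le\sigma^2$. The drift increment is split in the same way, into an $x$-part bounded pathwise by $L\gamma R$ and a $w$-part bounded by $M\omega$ via Lemma~\ref{lem:wstep}, followed by the same geometric sums.

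The one step that does not hold as written is the initialization. You cannot choose $\mathcal M_{-1}$, because Algorithm~\ref{alg:stoch} fixes $\mathcal M_{-1}=0$. If the recursion ran with the constant $\alpha$ from $t=0$ on, then $\lambda_0=-(1-\alpha)\gF(x_0)w_0+\alpha n_0$. Its deterministic part propagates as $(1-\alpha)^{t+1}\gF(x_0)w_0$ and can have dual norm of order $M$. For $\sigma=0$ this already violates the claimed bound $(L\gamma R+M\omega)/\alpha$ whenever $\gF(x_0)w_0\neq0$ and the step sizes are small.

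What the lemma actually relies on is the convention $\alpha_0=1$ in Algorithm~\ref{alg:stoch}. It gives $\mathcal M_0=\widehat G_0$, hence $\lambda_0=n_0$ exactly, and there is no drift at $t=0$ (the paper's $\Theta_0=0$). Unrolling then gives $\nu_t=\alpha\sum_{s=1}^t(1-\alpha)^{t-s}n_s+(1-\alpha)^tn_0$, which yields the $(1-\alpha)^{2t}$ term with no index adjustment. Your ``pure noise'' case is this same structure shifted by one index. So the repair is to invoke $\alpha_0=1$ rather than a choice of $\mathcal M_{-1}$; your ``zero'' option would need $\mathcal M_{-1}=\gF(x_0)w_0$, i.e.\ exact gradients.

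A smaller correction concerns your ``main obstacle'' paragraph. The cross terms in $\E\|\nu_t\|_2^2$ vanish because $x_{s'}$ and $w_{s'}$ are $\mathcal F_{s'}$-measurable, and this would hold whichever buffer the oracle reads; $D_s$ does not appear in $\nu_t$ at all. In this lemma the previous-buffer evaluation is needed only to meet the hypothesis of Lemma~\ref{lem:wstep} that $D$ be $\mathcal F$-measurable. Its essential role is the score-noise cross term in the one-step inequality (Step~2 of the proof of Theorem~\ref{thm:stoch}). Also, since $x_t-x_{t-1}=\gamma D_{t-1}$, the pathwise drift bound uses $\pn{D_{t-1}}\le R$, not $\pn{D_t}\le R$.
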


\begin{theorem}[Stochastic rate]\label{thm:stoch}
Under Assumptions~\ref{as:smooth}--\ref{as:gradbound}, run
Algorithm~\ref{alg:stoch} with $T\ge(2/M)^{4/3}$, $\alpha=T^{-1/2}$,
$\gamma R=T^{-3/4}$ and $\eta R=T^{-3/4}/M^2$. Then
\begin{equation}\label{eq:stoch}
\frac1T\sum_{t=0}^{T-1}\E\,\dn{\gF(x_t)w_t}
\;\le\;\frac{\Delta+2L+32+2\zeta\sigma(1+16/M)}{T^{1/4}}
+\frac{C_2}{T^{1/2}}+\frac{C_3}{T^{3/4}}+\frac{C_4}{T},
\end{equation}
with $C_2=2\zeta\sigma$, $C_3=1+\tfrac52L+32(1+\zeta\sigma/M)+\zeta^2K\sigma^2/M^2$
and $C_4=2M+32K\zeta^2\sigma^2/M^3$. Writing $C_1$ for the numerator of the
first term, the right-hand side is at most $\epsilon$ once
$T\ge\max\{(4C_1/\epsilon)^4,(4C_2/\epsilon)^2,(4C_3/\epsilon)^{4/3},4C_4/\epsilon\}$;
the average CA distance obeys the same bound up to a factor $2$ for every
measurable selection, and for $\hat t$ uniform on $\{0,\dots,T-1\}$ drawn after
the run, $\E\min_w\dn{\gF(x_{\hat t})w}\le\E\dn{\gF(x_{\hat t})w_{\hat t}}$ is
bounded by \eqref{eq:stoch}. $C_1$ is free of $K$; $K$ enters $C_3$ through the
$\ell_\infty$ error term of mirror descent on noisy scores and $C_4$ through
Lemma~\ref{lem:wstep}. The $K$-free requirement $T\ge(4C_1/\epsilon)^4$
determines the count whenever $C_3\le16C_1^3/\epsilon^2$ and
$C_4\le64C_1^4/\epsilon^3$, i.e. for $K\lesssim\epsilon^{-2}M^2/(\zeta^2\sigma^2)$
as $\epsilon\to0$.
\end{theorem}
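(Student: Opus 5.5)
The plan mirrors the deterministic proof of Theorem~\ref{thm:det}, with the true blended gradient replaced by the buffer and the mismatch handled by Lemma~\ref{lem:track}.

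\textbf{Step 1: a one-step inequality with the buffer.} Fix the comparator $u_K$. By Assumption~\ref{as:smooth} in the norm pair, $F(x_{t+1})u_K\le F(x_t)u_K+\gamma\ip{\gF(x_t)u_K}{D_t}+\tfrac12L\gamma^2R^2$. Now write $\gF(x_t)u_K=\gF(x_t)w_t+\gF(x_t)(u_K-w_t)$. For the first piece, the oracle identity at $\mathcal M_{t-1}$ and H\"older give
\[
\ip{\gF(x_t)w_t}{D_t}=\ip{\mathcal M_{t-1}}{D_t}+\ip{\gF(x_t)w_t-\mathcal M_{t-1}}{D_t}\le-R\dn{\mathcal M_{t-1}}+R\dn{\gF(x_t)w_t-\mathcal M_{t-1}}.
\]
Then $-R\dn{\mathcal M_{t-1}}\le-R\dn{\gF(x_t)w_t}+R\dn{\gF(x_t)w_t-\mathcal M_{t-1}}$. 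The net contribution is $-R\dn{\gF(x_t)w_t}+2R\,e_t$, where
\[
e_t:=\dn{\mathcal M_{t-1}-\gF(x_t)w_t}\le\dn{\lambda_{t-1}}+\dn{\gF(x_{t-1})w_{t-1}-\gF(x_t)w_t}.
\]
The last term is at most $L\gamma R+M\|w_t-w_{t-1}\|_1$. Its expectation is at most $L\gamma R+M\omega$ by Lemma~\ref{lem:wstep}, applied at time $t-1$ after conditioning.

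\textbf{Step 2: the score term and mirror descent.} The second piece is $\gamma\ip{\bar g_t}{u_K-w_t}$ with the true scores $\bar g_{t,i}=\ip{D_t}{\nabla f_i(x_t)}$. The algorithm instead uses the noisy scores $\hat g_{t,i}=\ip{D_t}{\nabla f_i(x_t;\xi_t)}$. Write $\ip{\bar g_t}{u_K-w_t}=\ip{\hat g_t}{u_K-w_t}+\ip{\bar g_t-\hat g_t}{u_K-w_t}$.
\begin{itemize}[leftmargin=1.4em,itemsep=1pt]
\item The cross term has zero conditional mean, because $D_t$ and $w_t$ are $\mathcal F_t$-measurable (this is exactly why the oracle is evaluated at $\mathcal M_{t-1}$).
\item The noisy term is bounded by Lemma~\ref{lem:md} with $\tau=\kappa=0$ and $u=u_K$. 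It telescopes, since $\KL(u_K\|w_0)=0$, leaving $\tfrac{\eta}{2}\E\|\hat g_t\|_\infty^2$.
\item Bound $\|\hat g_t\|_\infty\le R(M+\max_i\dn{n_{t,i}})$. Then use $\E\max_i\dn{n_{t,i}}^2\le\sum_i\zeta^2\E\|n_{t,i}\|_2^2\le K\zeta^2\sigma^2$. This gives $\E\|\hat g_t\|_\infty^2\le 2R^2(M^2+K\zeta^2\sigma^2)$, which is the source of the $K$ in $C_3$.
\end{itemize}

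\textbf{Step 3: sum, telescope and insert the step sizes.} Summing over $t$, telescoping $F(x_t)u_K$ (its drop is at most $\Delta$), dividing by $\gamma RT$ and taking expectations gives
\[
\frac1T\sum_t\E\dn{\gF(x_t)w_t}\le\frac{\Delta}{\gamma RT}+\frac{L\gamma R}2+\eta R(M^2+K\zeta^2\sigma^2)+\frac2T\sum_t\E e_t .
\]
Lemma~\ref{lem:track} then yields
\[
\frac1T\sum_t\E e_t\lesssim\zeta\sigma\sqrt\alpha+\frac{\zeta\sigma}{\alpha T}+\frac{2(L\gamma R+M\omega)}{\alpha}.
\]
Here the geometric sum $\sum_t(1-\alpha)^t\le1/\alpha$ produces the $C_2/T^{1/2}$ term. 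Substitute $\alpha=T^{-1/2}$, $\gamma R=T^{-3/4}$ and $\eta R=T^{-3/4}/M^2$. The condition $T\ge(2/M)^{4/3}$ is precisely $\eta RM\le\tfrac12$, which Lemmas~\ref{lem:wstep}--\ref{lem:track} require. Term by term:
\begin{itemize}[leftmargin=1.4em,itemsep=1pt]
\item $\Delta/(\gamma RT)=\Delta T^{-1/4}$.
\item $2L\gamma R/\alpha=2LT^{-1/4}$.
\item $2M\omega/\alpha$, using $\omega=16\eta R(M+\zeta\sigma)+8K\eta^2R^2\zeta^2\sigma^2$, gives $32(1+\zeta\sigma/M)T^{-1/4}$, which accounts for the $32$ and $\zeta\sigma\cdot16/M$ pieces of $C_1$, plus a $K$-dependent $T^{-1}$ piece ($C_4$).
\item $\zeta\sigma\sqrt\alpha$ gives the remaining $\zeta\sigma$ part of $C_1$.
\item $\eta R M^2=T^{-3/4}$ and $\eta RK\zeta^2\sigma^2=K\zeta^2\sigma^2T^{-3/4}/M^2$ give $C_3$.
\item The leftover $L\gamma R/2$ contributes the extra $L$ inside $C_3$.
\end{itemize}
The iteration count follows by making each of the four terms at most $\epsilon/4$. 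The statements about CA distance and uniform $\hat t$ follow from $\mathrm{CA}_t\le2\psi_t(w_t)$ and averaging.

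\textbf{Main obstacle.} The delicate step is keeping $K$ out of the $T^{-1/4}$ term. The drift of the buffer target through $w_t$ must be charged to the expected increment $\omega$ from Lemma~\ref{lem:wstep}, not to the pathwise $2\eta\|\hat g_t\|_\infty$, and the conditioning must be done carefully so that Lemma~\ref{lem:wstep} applies at each step. Exact constant matching (the factors $2$ and $32$) is bookkeeping that I would verify last.
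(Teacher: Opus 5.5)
Your proposal is correct and follows the paper's proof essentially step for step. The shared ingredients are the buffer form of the one-step inequality with error $2R(\dn{\lambda_{t-1}}+\dn{\Theta_t})$, the zero-mean noise cross term obtained by evaluating the oracle at $\mathcal M_{t-1}$, mirror descent against $u_K$ with $\E\|g_t\|_\infty^2\le2R^2(M^2+K\zeta^2\sigma^2)$, and Lemmas~\ref{lem:wstep}--\ref{lem:track} for the drift and noise.

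Your constant bookkeeping is incomplete, though. To reach the stated constants, keep the unamplified term $\frac2T\sum_t\E\dn{\Theta_t}\le2B$, with $B:=L\gamma R+M\omega$, separate from the $2B/\alpha$ that comes from $\E\dn{\lambda_{t-1}}$. This $2B$ term supplies the $2L+32(1+\zeta\sigma/M)$ in $C_3$ and half of the $K$-term in $C_4$. The $t=0$ term $\dn{\gF(x_0)w_0}\le M$ supplies the $2M$ in $C_4$.
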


Here ``$K$-free'' means no explicit dependence with $L,M,\Delta,\zeta,\sigma$
fixed (these can change with task scaling and norm choices). By
\eqref{eq:gap-stationarity}, \eqref{eq:stoch} also bounds the expected average
inner gap. Two corollaries in Appendix~\ref{app:stoch} give further
comparator and noise-tail bounds.
Corollary~\ref{cor:regret-stoch} bounds the stochastic inner regret against any
deterministic $u$ by \eqref{eq:stoch} with $\Delta\to M^2\ln K$, hence the
average expected gap up to
$\bar V_T:=\min_u\E\frac1T\sum_t[\dn{\gF(x_t)u}-\min_w\dn{\gF(x_t)w}]\ge\E V_T$.
Corollary~\ref{cor:subg} shows that under a sub-Gaussian tail bound the
linear-$K$ terms of $C_3,C_4$ become $\ln K$, so every constant is
polylogarithmic in $K$.

The condition $T\ge(2/M)^{4/3}$ is $\eta RM\le\frac12$. The gradient bound is
assumed here because Lemma~\ref{lem:M}'s induction controls $f_i$ along a
deterministic path, whereas the stochastic one-step inequality has terms
controlled only in expectation (Remark~\ref{rem:gradbound}). The weight step
is $\Theta(T^{-3/4}/M^2)$ rather than $\Theta(T^{-1/2}/M)$ because the latter
leaves a residual $\Theta(M)$ in the drift; this costs a factor $M\sqrt{\log K}$
in the regret constant only (Appendix~\ref{app:pertask} compares with per-task
tracking).

\section{Iteration-wise inner gap}\label{sec:iterwise}

A per-iterate guarantee, which \citet{zhang2025mgda} obtain for the CA
distance in the Euclidean case at $O(\epsilon^{-11})$ and $O(\epsilon^{-17})$,
is stronger than an ergodic one; to track a \emph{moving} target we make the
inner problem strongly convex. Here $\tau>0$ makes $\tilde\psi_t(w):=\dn{\gF(x_t)w}+\tau\KL(w\|u_K)$
strongly convex with a unique minimizer $p_t$ over $\simplex^\kappa$, and
$\kappa>0$ keeps the divergence to that target finite; neither enters
\eqref{eq:det-master}. The analysis uses only three norm-free facts: $-g_t/R$
is a subgradient of $w\mapsto\dn{\gF(x_t)w}$, that function is $M$-Lipschitz in
$\|\cdot\|_1$, and it moves by at most $L\pn{x-x'}$ when $x$ moves.

\begin{theorem}[Iteration-wise inner gap, with progress]\label{thm:iterwise}
Let Assumptions~\ref{as:smooth}--\ref{as:lb} hold and suppose
$\max_i\dn{\nabla f_i(x_t)}\le M$ along the trajectory of
Algorithm~\ref{alg:main}. Fix a tracking accuracy $\epsilon\in(0,1]$ with
$\epsilon\le2M$ and a bias accuracy $\beta\in(0,\epsilon]$, and set
\[
\tau=\frac{\beta}{2\ln K},\qquad
\kappa=\frac{\beta}{4MK},\qquad
\eta R=\frac{\epsilon^{2}\beta}{768\,M^{4}\ln K},\qquad
\gamma R=\frac{\epsilon^{6}\beta^{6}}{27\cdot2^{28}\,(2+\Lambda)\,L\,M^{11}K\ln^{5}K},
\]
with $\Lambda:=\ln(4MK/\beta)$. Then, for any primal norm:
\begin{enumerate}[leftmargin=1.8em,itemsep=1pt]
\item[(i)] \textbf{Tracking.} For every
$t\ge T_0:=\big\lceil\tfrac{6144\,M^4\ln^2K}{\epsilon^2\beta^2}\ln\tfrac{24M^2\ln K}{\epsilon^2}\big\rceil$:
$\ \|w_t-p_t\|_1\le\epsilon/(2M)$ and
$\mathrm{gap}(p_t):=\dn{\gF(x_t)p_t}-\min_w\dn{\gF(x_t)w}\le\beta$, hence
$\mathrm{gap}_t\le\epsilon/2+\beta$. With $\beta=\epsilon/2$:
$\mathrm{gap}_t\le\epsilon$ for all $t\ge T_0=\tilde O(M^4\ln^2K/\epsilon^4)$.
\item[(ii)] \textbf{Progress.} For every $T$, \eqref{eq:det-master} holds, and
its right-hand side is at most $\epsilon/2$ for
$T\ge T_1:=\lceil4\Delta/(\gamma R\,\epsilon)\rceil$; with $\beta=\epsilon/2$,
$T_1=\tilde O(\Delta LM^{11}K/\epsilon^{13})$.
\item[(iii)] \textbf{Tail output.} For $T\ge\max\{2T_0,T_1\}$ and
$\hat t\in\argmin_{T_0\le t<T}\dn{\gF(x_t)w_t}$, the single iterate
$(x_{\hat t},w_{\hat t})$ satisfies both $\dn{\gF(x_{\hat t})w_{\hat t}}\le\epsilon$,
hence $\min_w\dn{\gF(x_{\hat t})w}\le\epsilon$, and $\mathrm{gap}_{\hat t}\le\epsilon/2+\beta$.
\end{enumerate}
If $\epsilon>2M$ all conclusions are trivial.
\end{theorem}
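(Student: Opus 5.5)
The plan is to treat (i) as tracking a slowly moving minimizer with strongly convex online mirror descent, to get (ii) directly from Theorem~\ref{thm:det}, and to combine the two for (iii).

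\textbf{Bias of $p_t$.} First bound $\mathrm{gap}(p_t)\le\beta$. Let $w^\star$ be any inner minimizer and $\bar w:=(1-K\kappa)w^\star+\kappa\mathbf 1\in\simplex^\kappa$. Then $\|\bar w-w^\star\|_1\le2K\kappa$, and since $\psi_t$ is $M$-Lipschitz in $\|\cdot\|_1$,
\[
\psi_t(\bar w)\le\psi_t(w^\star)+2MK\kappa=\psi_t(w^\star)+\beta/2 .
\]
Next, $\tilde\psi_t(p_t)\le\tilde\psi_t(\bar w)$ and $0\le\KL(\cdot\|u_K)\le\ln K$ give
\[
\psi_t(p_t)\le\psi_t(\bar w)+\tau\ln K=\psi_t(\bar w)+\beta/2 ,
\]
hence $\mathrm{gap}(p_t)\le\beta$.

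\textbf{Tracking recursion.} Apply Lemma~\ref{lem:md} with $u=p_t$. The subgradient fact (Lemma~\ref{lem:subgrad}) turns $\eta\ip{g_t}{p_t-w_t}$ into $\eta R(\psi_t(w_t)-\psi_t(p_t))$. Together with the $\rho(r(p_t)-r(w_{t+1}))$ term, this assembles $\eta R(\tilde\psi_t(w_{t+1})-\tilde\psi_t(p_t))$ up to a Lipschitz correction $\eta RM\|w_{t+1}-w_t\|_1\le2\eta^2R^2M^2$. Strong convexity of $\tilde\psi_t$ relative to KL then gives
\[
\tilde\psi_t(w_{t+1})-\tilde\psi_t(p_t)\ge\tau\KL(p_t\|w_{t+1})\ge0 ,
\]
and so
\[
(1+\rho)\KL(p_t\|w_{t+1})\le\KL(p_t\|w_t)+O(\eta^2R^2M^2).
\]

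\textbf{Drift of $p_t$.} Next pass from $p_t$ to $p_{t+1}$. Because $\tilde\psi_t$ and $\tilde\psi_{t+1}$ differ by a function that is at most $L\gamma R$ in sup norm, $\tau$-strong convexity in $\|\cdot\|_1$ (Pinsker) gives
\[
\|p_{t+1}-p_t\|_1\le2\sqrt{L\gamma R/\tau}.
\]
Since every coordinate of $w_{t+1}$ and $p$ is at least $\kappa$, KL is Lipschitz in $p$ with constant $O(2+\Lambda)$, where $\Lambda=\ln(1/\kappa)$ accounts for the logs. Hence
\[
\KL(p_{t+1}\|w_{t+1})\le\KL(p_t\|w_{t+1})+O\big((2+\Lambda)\sqrt{L\gamma R/\tau}\big).
\]
Unrolling with contraction factor $(1+\rho)^{-1}$, from the initial value $\KL(p_0\|u_K)\le\ln K$, yields
\[
\KL(p_t\|w_t)\le(1+\rho)^{-t}\ln K+\rho^{-1}\big[O(\eta^2R^2M^2)+O\big((2+\Lambda)\sqrt{L\gamma R/\tau}\big)\big].
\]

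\textbf{Choosing the parameters.} We want the right-hand side to be at most $\epsilon^2/(8M^2)$, which by Pinsker gives $\|w_t-p_t\|_1\le\epsilon/(2M)$. The term $\eta^2R^2M^2/\rho=\eta RM^2/\tau$ fixes the choice of $\eta$. The drift term divided by $\rho$ fixes $\gamma R$ to a power of six, which explains the $\epsilon^6\beta^6$. The transient $(1+\rho)^{-t}\ln K$ is small once $t\gtrsim\rho^{-1}\ln(\ldots)$, which is $T_0$. Then $\mathrm{gap}_t\le\psi_t(w_t)-\psi_t(p_t)+\mathrm{gap}(p_t)\le M\|w_t-p_t\|_1+\beta$, giving (i).

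\textbf{Parts (ii) and (iii).} Part (ii) is Theorem~\ref{thm:det} verbatim. Substituting the chosen $\eta R$ and $\gamma R$ shows the last two terms are tiny, at most $\epsilon/4$ together, and $\Delta/(\gamma RT)\le\epsilon/4$ for $T\ge T_1$. For (iii), $T\ge2T_0$ means the tail $[T_0,T)$ contains at least half the iterates. The tail average is at most $2\cdot$ the full average $\le\epsilon$, so the tail minimizer satisfies the stationarity bound, and the gap bound follows from (i).

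The main obstacle is the strong-convexity/mirror-descent step: getting a clean contraction $(1+\rho)^{-1}$ with the clipping constraint and a nonsmooth $\psi_t$. This needs careful handling of $w_{t+1}$ versus $w_t$ inside $\psi_t$ and a relative strong convexity of $\tau\KL(\cdot\|u_K)$ with respect to $\KL(p\|\cdot)$. I would first try to use the three-point form of the proximal step directly, rather than Lemma~\ref{lem:md} as stated.
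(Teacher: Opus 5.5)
Your bias bound, the sensitivity estimate $\|p_{t+1}-p_t\|_1=O(\sqrt{L\gamma R/\tau})$, and parts (ii)--(iii) follow the paper. Your contraction step becomes Lemma~\ref{lem:contract-slack} once you run it from the three-point inequality, as you anticipate. The paper absorbs $2\bar\eta M\|w_{t+1}-w_t\|_1$ into $-\KL(w_{t+1}\|w_t)$ by Young and Pinsker, so it needs no pathwise increment bound. Lemma~\ref{lem:wstep} does not supply one for $\tau>0$ anyway.

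The genuine gap is the moving-target step. You pass from $p_t$ to $p_{t+1}$ using the worst-case Lipschitz constant $O(\Lambda)$ of $\KL(\cdot\|w_{t+1})$ on $\simplex^\kappa$. Your per-step drift is therefore $\Theta(\Lambda\sqrt{L\gamma R/\tau})$, which is only H\"older-$\tfrac12$ in $\gamma R$. Dividing by $\rho$ and asking for $O(\epsilon^2/M^2)$ forces $\gamma R\lesssim\epsilon^4\rho^2\tau/(\Lambda^2LM^4)\asymp\epsilon^8\beta^5/(\Lambda^2LM^{12}\ln^5K)$. That is not the stated $\epsilon^6\beta^6/((2+\Lambda)LM^{11}K\ln^5K)$, so your remark that this step ``explains the $\epsilon^6\beta^6$'' is wrong for your own bound. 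At the stated $\gamma R$, your accumulated drift term is of order $\frac{\Lambda}{\sqrt{2+\Lambda}}\cdot\frac{\epsilon\sqrt\beta}{M^{3/2}\sqrt K}$. Its ratio to the budget $\epsilon^2/(8M^2)$ is of order $\sqrt{\Lambda\beta M/K}/\epsilon$, which blows up as $\epsilon\to0$ for $\beta=\epsilon/2$ and fixed $M,K$. So (i) is not proved at the parameters in the statement. Your route would give a different theorem: a model step without the $1/K$ factor but with $\epsilon^8\beta^5$ in place of $\epsilon^6\beta^6$, and $T_1\sim\epsilon^{-14}$ at $\beta=\epsilon/2$.

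The missing idea is the self-bounding perturbation estimate of Lemma~\ref{lem:klpert}. By convexity of $\KL(\cdot\|q)$ and Cauchy--Schwarz, $\KL(p'\|q)-\KL(p\|q)\le\sqrt{V(p'\|q)}\,\sqrt{\chi^2(p\|p')}$, where $V(p'\|q)=\sum_ip'_i\ln^2(p'_i/q_i)$. On $\simplex^\kappa$, the scalar inequality $e^ss^2\le(2+\max(s,0))(se^s-e^s+1)$ gives $V(p'\|q)\le(2+\Lambda)\KL(p'\|q)$. The log-ratios are thus controlled by the tracked quantity itself rather than uniformly by $\Lambda$.

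Take $q=w_{t+1}$, $p=p_t$, $p'=p_{t+1}$ and apply Young's inequality with weight $\rho/4$. This moves $\tfrac\rho4\Phi_{t+1}$ into the contraction, which becomes $1-\rho/4$, and leaves at most $2(2+\Lambda)\chi^2(p_t\|p_{t+1})/\rho$ per step (Lemma~\ref{lem:recursion}). Here $\chi^2(p_t\|p_{t+1})\le\|p_t-p_{t+1}\|_1^2/\kappa\le2L\gamma R/(\tau\kappa)$ is \emph{linear} in $\gamma R$. Summed over the effective horizon $4/\rho$, the drift contributes $16(2+\Lambda)L\gamma R/(\rho^2\tau\kappa)$. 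Setting this equal to $\epsilon^2/(24M^2)$ gives exactly the stated $\gamma R$, with the factor $K$ coming from $1/\kappa$.

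A minor point: relative strong convexity at $p_t$ gives $\tilde\psi_t(w_{t+1})-\tilde\psi_t(p_t)\ge\tau\KL(w_{t+1}\|p_t)$, not $\tau\KL(p_t\|w_{t+1})$. You use only nonnegativity, so this is harmless.
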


\begin{corollary}[Iteration-wise CA distance under uniform convexity]\label{cor:iterwise-ca}
If the dual norm satisfies \eqref{eq:S2} with power type $q$ and constant $\mu$
(all $\ell_p$, Schatten-$p$, $1<p<\infty$; Proposition~\ref{prop:lp}), then
$\mathrm{CA}_t\le M\|w_t-p_t\|_1+\big((q/\mu)M^{q-1}\mathrm{gap}(p_t)\big)^{1/q}$
for every selection, so Theorem~\ref{thm:iterwise} with
$\beta=\mu\epsilon^q/(q2^qM^{q-1})$ gives $\mathrm{CA}_t\le\epsilon$ for $t\ge T_0$;
in the Euclidean case $T_0=\tilde O(M^6\ln^2K/\epsilon^6)$, $T_1=\tilde O(\Delta LM^{17}K/\epsilon^{19})$.
\end{corollary}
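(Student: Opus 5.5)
The plan is to combine the tracking bound of Theorem~\ref{thm:iterwise}(i) with the uniform convexity inequality \eqref{eq:S2}. Fix an iterate $t\ge T_0$ and any selection $w^\star(x_t)$ with $d^\star(x_t)=\gF(x_t)w^\star(x_t)$. We split the CA distance by the triangle inequality through the regularized target $p_t$:
\[
\mathrm{CA}_t\le\dn{\gF(x_t)(w_t-p_t)}+\dn{\gF(x_t)p_t-d^\star(x_t)}.
\]

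\textbf{First term.} We write $\gF(x_t)(w_t-p_t)=\sum_i(w_{t,i}-p_{t,i})\nabla f_i(x_t)$. The trajectory bound $\max_i\dn{\nabla f_i(x_t)}\le M$ then gives at most $M\|w_t-p_t\|_1$. This is the Lipschitz fact already used in Section~\ref{sec:iterwise}.

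\textbf{Second term.} This is the Remark~\ref{rem:hilbert}-type estimate $\mathrm{CA}^q\le(q/\mu)M^{q-1}\mathrm{gap}$, applied to the pair $(x_t,p_t)$ in place of $(x_t,w_t)$. I expect it to be the only step needing care. The argument I would give is as follows.

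\begin{itemize}
\item Set $a=\gF(x_t)p_t$, $b=d^\star(x_t)$, and the convex set $C=\{\gF(x_t)w:w\in\simplex\}$, on which $b$ minimizes $\dn\cdot$.
\item Apply \eqref{eq:S2} along the segment $b+s(a-b)$, $s\in[0,1]$, which stays in $C$. In its standard form it reads $\dn{b+s(a-b)}^q\le(1-s)\dn b^q+s\dn a^q-\tfrac{\mu}{q}\,s(1-s)\dn{a-b}^q$, possibly up to normalization and scaling by the radius $M$; I would check this against Appendix~\ref{app:lp}.
\item Minimality of $b$ gives $\dn b^q\le\dn{b+s(a-b)}^q$. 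Dividing by $s$ and letting $s\to0$ yields $\tfrac{\mu}{q}\dn{a-b}^q\le\dn a^q-\dn b^q$.
\item Convert the $q$-th power difference to a gap by the mean value theorem: $\dn a^q-\dn b^q\le qM^{q-1}(\dn a-\dn b)$, since both norms are at most $M$. This gives $\dn{a-b}^q\le(q/\mu)M^{q-1}\mathrm{gap}(p_t)$, up to the exact constant convention of \eqref{eq:S2}, which I would match to the paper's stated form.
\end{itemize}

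\textbf{Conclusion.} Summing the two terms gives the displayed bound for every selection. Now choose $\beta=\mu\epsilon^q/(q2^qM^{q-1})$. We need $\beta\le\epsilon$, which holds since $\epsilon\le2M$ and $\mu\le q$ in the normalized form; I would verify this or else take $\beta$ as the minimum with $\epsilon$. Theorem~\ref{thm:iterwise}(i) gives two facts. The first is $\|w_t-p_t\|_1\le\epsilon/(2M)$, so the first term is at most $\epsilon/2$. The second is $\mathrm{gap}(p_t)\le\beta$, so the second term is at most $((q/\mu)M^{q-1}\beta)^{1/q}=\epsilon/2$. Hence $\mathrm{CA}_t\le\epsilon$ for all $t\ge T_0$.

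\textbf{Euclidean rates.} Here $q=2$ and $\mu=2$ (parallelogram law), so $\beta=\epsilon^2/(4M)$. Substituting into $T_0=\tilde O(M^4\ln^2K/(\epsilon^2\beta^2))$ gives $\tilde O(M^6\ln^2K/\epsilon^6)$. Substituting into $T_1=\tilde O(\Delta LM^{11}K/(\epsilon^7\beta^6))$, read off from $\gamma R$ and $T_1=\lceil4\Delta/(\gamma R\epsilon)\rceil$, gives $\tilde O(\Delta LM^{17}K/\epsilon^{19})$.
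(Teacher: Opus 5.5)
Your proposal is correct and is essentially the paper's proof: a triangle inequality through $p_t$, (F1) for the first term, the argument of Remark~\ref{rem:hilbert} with $p_t$ in place of $w_t$ for the second, then substituting $\beta$ into Theorem~\ref{thm:iterwise}. Your segment-and-limit step is the first-order optimality condition $\ip{\nabla\varphi_q(d^\star(x_t))}{\gF(x_t)p_t-d^\star(x_t)}\ge0$ in disguise. The paper combines that condition with \eqref{eq:S2} at the pair $(\gF(x_t)p_t,d^\star(x_t))$ directly. This is cleaner: in the normalization of \eqref{eq:S2}, the two-point argument gives the segment inequality only with the smaller factor $s(1-s)[(1-s)^{q-1}+s^{q-1}]$ in place of $s(1-s)$. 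That is still enough as $s\to0$.

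There is one slip in the constants, the normalization you flagged. \eqref{eq:S2} puts $\mu/q$ against $\varphi_q=\tfrac1q\dn\cdot^q$, so the limit gives $\mu\dn{a-b}^q\le\dn a^q-\dn b^q$, not $\tfrac{\mu}{q}\dn{a-b}^q\le\dn a^q-\dn b^q$. Taken literally, your chain gives $q^2/\mu$ rather than $q/\mu$. In the same normalization the Euclidean constant is $\mu=1$, not $2$, because $\tfrac12\|y\|_2^2-\tfrac12\|y'\|_2^2-\ip{y'}{y-y'}=\tfrac12\|y-y'\|_2^2$; hence $\beta=\epsilon^2/(8M)$. With your $\beta=\epsilon^2/(4M)$ the second term is $\epsilon/\sqrt2$, so $\mathrm{CA}_t\le\epsilon$ fails by a constant factor. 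The stated orders of $T_0$ and $T_1$ are unaffected.
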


The $\epsilon^{-13}$ bound concerns inner gap, whereas the
$\epsilon^{-11}$ deterministic bound of \citet{zhang2025mgda} concerns
Euclidean CA distance under different smoothness assumptions. For the latter
metric our conversion gives $\tilde O(\epsilon^{-19})$, so we claim no
competitive exponent. The content is an
iteration-wise inner-gap guarantee with simultaneous ergodic progress and one
tail-selected iterate carrying both, in any norm, conditional on the gradient
bound $M$ (a hypothesis here, unlike in Theorem~\ref{thm:det};
Remark~\ref{rem:M-reg}). The tracked object is the regularized floored target
$p_t$; the gap to the original inner problem comes through $\beta$.

Why we track the unsquared objective, unlike the Euclidean analyses, and what
it costs, is explained in Remark~\ref{rem:unsquared}; proofs are in
Appendix~\ref{app:reg}.

\section{Experiments}\label{sec:exp}

We evaluate on Cityscapes ($K=2$: segmentation and depth) and NYU-v2 ($K=3$:
segmentation, depth, surface normals) with the SegNet/MTAN protocol of
\citet{navon2022nashmtl,liu2023famo}, against single-task learning (STL),
linear scalarization (LS), SI, RLW, DWA and UW weighting, MGDA (cold and
warm-started), PCGrad, GradDrop, CAGrad, MoCo, MoDo, Nash-MTL, FAMO and MOON. Our
method is Algorithm~\ref{alg:stoch} with the spectral oracle on matrix-valued
parameters. Tables~\ref{tab:city} and~\ref{tab:nyu} report per-task metrics and
$\Delta m\%$, the mean relative degradation against the single-task (STL) row
of each table (lower is better). The reported architectures, hyperparameters, and
tuning settings are in Appendix~\ref{app:expdetails}.
The dense-prediction results use one seed, averaging the final ten epochs;
they do not quantify variation across independent training runs. Our step sizes
and schedule were tuned on the split on which results are reported, whereas the
baselines use their reference hyperparameters (Appendix~\ref{app:densedetails}).

\begin{table}[t]
\centering
\scriptsize
\setlength{\tabcolsep}{3.5pt}
\begin{tabular}{lccccc}
\toprule
Method & mIoU$\uparrow$ & Pix Acc$\uparrow$ & Abs Err$\downarrow$ & Rel Err$\downarrow$ & $\Delta m\%\downarrow$\\
\midrule
STL & 74.11 & 93.44 & 0.0137 & 38.22 & 0.00\\
\midrule
LS & 74.80 & 93.37 & 0.0188 & 42.41 & 11.83\\
SI & 67.77 & 89.97 & 0.0151 & 34.65 & 3.29\\
RLW & 74.24 & 93.30 & 0.0169 & 47.09 & 11.63\\
DWA & 74.42 & 93.41 & 0.0159 & 41.94 & 6.35\\
UW & 71.56 & 92.15 & 0.0191 & 38.75 & 11.41\\
MGDA & 73.12 & 92.80 & 0.0157 & 36.03 & 2.72\\
PCGrad & 75.89 & 93.72 & 0.0150 & 45.83 & 6.67\\
GradDrop & 75.85 & 93.70 & 0.0151 & 43.60 & 5.42\\
CAGrad & 75.83 & 93.50 & 0.0145 & 35.89 & -0.66\\
MoCo & 74.37 & 93.17 & 0.0221 & 81.06 & 43.33\\
MoDo & 74.71 & 93.51 & 0.0152 & 44.31 & 6.50\\
Nash-MTL & 74.76 & 93.29 & 0.0144 & 33.47 & -2.01\\
FAMO & 74.19 & 93.28 & 0.0154 & \textbf{30.62} & -1.85\\
MGDA (warm) & 75.64 & 93.60 & 0.0147 & 47.00 & 7.01\\
MOON &
\underline{78.53} &
\underline{94.15} &
\underline{0.0141} &
\underline{31.39} &
\textbf{-5.42}\\
\midrule
\textbf{Entropic LMO-MGDA (ours)} &
\textbf{79.03} &
\textbf{94.48} &
\textbf{0.0136} &
36.29 &
\underline{-3.38}\\
\bottomrule
\end{tabular}
\caption{Results on CityScapes dataset. The best and second-best values are highlighted in \textbf{bold} and \underline{underlined}, respectively. The task-specific metrics and the performance drop $\Delta m\%$ are reported.}
\label{tab:city}
\end{table}

\begin{table}[t]
\centering
\scriptsize
\setlength{\tabcolsep}{3.5pt}
\begin{tabular}{lcccccccccc}
\toprule
& \multicolumn{2}{c}{Segmentation} & \multicolumn{2}{c}{Depth} & \multicolumn{5}{c}{Surface normal} & \\
\cmidrule(lr){2-3}
\cmidrule(lr){4-5}
\cmidrule(lr){6-10}
Method & mIoU$\uparrow$ & Pix Acc$\uparrow$ & Abs$\downarrow$ & Rel$\downarrow$ & Mean$\downarrow$ & Med.$\downarrow$ & $<\!11.25\uparrow$ & $<\!22.5\uparrow$ & $<\!30\uparrow$ & $\Delta m\%\downarrow$\\
\midrule
STL & 38.77 & 63.98 & 0.6350 & 0.2654 & 25.10 & 19.06 & 30.12 & 57.66 & 69.53 & 0.00\\
\midrule
LS & 38.13 & 64.83 & 0.5523 & 0.2334 & 27.79 & 23.56 & 23.03 & 48.23 & 61.64 & 6.76\\
SI & 38.59 & 64.57 & 0.5439 & 0.2339 & 27.57 & 23.43 & 22.81 & 48.45 & 62.18 & 6.32\\
RLW & 38.45 & 64.61 & 0.5814 & 0.2454 & 27.83 & 23.46 & 23.20 & 48.51 & 61.81 & 7.53\\
DWA & 39.57 & 65.19 & 0.5691 & 0.2399 & 27.40 & 22.87 & 24.26 & 49.61 & 62.81 & 5.37\\
UW & 38.48 & 63.88 & 0.5555 & 0.2294 & 27.47 & 23.38 & 23.18 & 48.60 & 62.29 & 6.23\\
MGDA & 33.79 & 62.12 & 0.5906 & 0.2398 & \underline{24.47} & \textbf{18.65} & \textbf{31.28} & \textbf{58.50} & \textbf{70.28} & -1.33\\
PCGrad & 38.79 & 65.08 & 0.5501 & 0.2212 & 27.18 & 22.74 & 24.10 & 49.93 & 63.36 & 4.23\\
GradDrop & 40.37 & 66.04 & 0.5536 & 0.2353 & 27.97 & 23.76 & 22.78 & 47.97 & 61.39 & 6.39\\
CAGrad & \underline{41.61} & 66.95 & 0.5444 & 0.2251 & 25.75 & 20.79 & 27.44 & 53.99 & 66.85 & -1.18\\
MoCo & 35.47 & 61.76 & 0.5808 & 0.2328 & 31.21 & 27.97 & 15.31 & 39.38 & 54.01 & 18.38\\
MoDo & 39.41 & 65.82 & 0.5353 & 0.2225 & 25.54 & 20.08 & 28.57 & 55.34 & 67.78 & -1.96\\
Nash-MTL & 38.18 & 65.39 & 0.5575 & 0.2325 & 25.12 & 19.85 & 29.13 & 55.97 & 68.48 & -1.48\\
FAMO & 32.57 & 61.67 & 0.5411 & 0.2220 & 25.16 & 19.25 & 30.18 & 57.16 & 69.10 & -1.00\\
MGDA (warm) & 38.41 & 63.76 & 0.5813 & 0.2453 & 28.13 & 24.03 & 21.43 & 47.24 & 61.13 & 9.15\\
MOON &
39.88 &
\underline{67.30} &
\textbf{0.4951} &
\underline{0.2152} &
25.50 &
20.02 &
29.44 &
55.34 &
67.71 &
\underline{-3.72}\\
\midrule
\textbf{Entropic LMO-MGDA (ours)} &
\textbf{42.71} &
\textbf{68.07} &
\underline{0.5075} &
\textbf{0.2129} &
\textbf{23.76} &
\underline{19.18} &
\underline{30.27} &
\underline{57.34} &
\underline{69.60} &
\textbf{-6.80}\\
\bottomrule
\end{tabular}
\caption{Results on NYU-v2 dataset. The best and second-best values are highlighted in \textbf{bold} and \underline{underlined}, respectively. The task-specific metrics and the performance drop $\Delta m\%$ are reported.}
\label{tab:nyu}
\end{table}

On Cityscapes, our method achieves the highest segmentation scores and the
lowest absolute depth error among the reported methods. MOON achieves a lower
aggregate degradation, $-5.42\%$ versus $-3.38\%$, driven by its lower relative
depth error. On NYU-v2, our method achieves the lowest reported aggregate
degradation, $-6.80\%$ versus $-3.72\%$ for MOON. These are descriptive
comparisons from single runs. With $K=2,3$, they do not establish the predicted
scaling advantage with the number of objectives, and the implementation
differences do not isolate the contribution of geometry or weighting.

\subsection{MultiMNIST: comparison with MOON and ablations}\label{sec:mnist}

We follow MOON's MultiMNIST setup ($K=2$: left and right digit, small ViT).
Because its dataset instance is not released, we regenerate the data and rerun
each method (Appendix~\ref{app:mnistdetails}); only within-table comparisons
are meaningful. The reported mean accuracy is $90.54\%$ for our configuration
and $87.47\%$ for MOON. Hyperparameters for our method, including the weight
step, are selected on a fixed validation split drawn from MNIST-train and
frozen before evaluation on MNIST-test. Every value of the $\eta$ and $\kappa$
grids (Tables~\ref{tab:eta}, \ref{tab:kappa_ablation}) gives at least $89.5\%$,
above all baselines. The spectral oracle is best under the fixed step sizes
used in the ablation, removing momentum reduces accuracy by $8.86$ points,
and entropic and projected weights give similar means at $K=2$; these are
fixed-hyperparameter substitutions, not separately tuned comparisons.
\begin{table}[t]
\centering
\scriptsize

\begin{minipage}[t]{0.48\textwidth}
\centering
\renewcommand{\arraystretch}{0.90}
\setlength{\tabcolsep}{3pt}
\begin{tabular}{lccc}
\toprule
Method & Left$\uparrow$ & Right$\uparrow$ & Avg.$\uparrow$ \\
\midrule
MGDA        & 83.63 & 85.13 & 84.38 \\
MGDA + Muon & 84.48 & 86.43 & 85.45 \\
FAMO        & 85.24 & 87.29 & 86.27 \\
FAMO + Muon & 87.28 & 88.02 & 87.65 \\
Muon (LS)   & \underline{87.45} & 88.04 & \underline{87.75} \\
MOON        & 86.87 & \underline{88.06} & 87.47 \\
\midrule
\textbf{Entropic LMO-MGDA (Ours)}
& \textbf{89.86}
& \textbf{91.23}
& \textbf{90.54} \\
\bottomrule
\end{tabular}
\end{minipage}
\hfill
\begin{minipage}[t]{0.48\textwidth}
\centering
\renewcommand{\arraystretch}{0.85}
\setlength{\tabcolsep}{2.5pt}
\begin{tabular}{llcc}
\toprule
Component & Variant & Avg.$\uparrow$ & Gap$\downarrow$\\
\midrule
Oracle 
& $\ell_2$ & 49.59 & 0.55\\
& sign & 89.01 & 6.38\\
& spectral & 90.54 & 0.90\\
\midrule
Weights 
& entropic & 90.54 & 0.90\\
& projected & 90.51 & 0.97\\
\midrule
Momentum 
& blended & 90.54 & 0.90\\
& per-task & 90.16 & 1.41\\
& none & 81.68 & 1.65\\
\midrule
Step $\eta$ & $10^{-5}$ to $10^{-3}$ & 89.5--90.5 & 0.29--1.97\\
\bottomrule
\end{tabular}
\end{minipage}

\caption{MultiMNIST with a ViT backbone. Left: test accuracy (\%), mean over
three seeds on the same regenerated data. Best and second-best values are shown in bold and underlined.
Right: fixed-hyperparameter substitutions with the final encoder-only inner gap,
in each oracle's dual norm (not comparable across oracle rows). Details in
Appendix~\ref{app:mnistdetails}.}
\label{tab:mnist}
\end{table}

\section{Conclusion}\label{sec:limits}

We analyzed a single-loop entropic-LMO method for multi-objective optimization
in arbitrary norm pairs: $O(\epsilon^{-2})$ deterministic stationarity with a
derived gradient bound, $O(\epsilon^{-4})$ stochastic stationarity with one
persistent buffer and no explicit $K$ in the leading term, static regret of the
weights on bounded gradient sequences, and, with regularization and clipping,
iteration-wise inner-gap control after burn-in.

The inner gap avoids the minimizer-selection dependence of CA distance in
spectral and sign geometries. It does not by itself certify simultaneous
descent, and its average, like average static regret, is already controlled
by the actual blended-gradient residual. The distinct tracking conclusion is
the every-iterate guarantee after burn-in, not average-gap convergence alone.

Several limitations remain. Appendix~\ref{app:inexact} tolerates fixed relative
oracle error for deterministic stationarity and average gap, but not yet for
the stochastic or iteration-wise guarantees. The gradient bound is derived only
in the deterministic unregularized case, and the tracking exponents are not
competitive with Euclidean CA-distance results. Experiments have $K\le3$ and
$\tau=\kappa=0$, so they do not test many-objective scaling or regularized
tracking.

\subsection*{AI use statement}

We used a large language model as an assistant for restructuring and editing the manuscript, checking
and tightening proofs, drafting parts of the experiment code, and suggesting related work. The authors
verified all such output. The research ideas, the method, and the main results are the authors' own,
and the authors take full responsibility for the content of this work.

\subsection*{Reproducibility statement}

All theoretical results are stated together with their assumptions
(Assumptions~\ref{as:smooth}--\ref{as:gradbound}), and complete proofs are in
Appendices~\ref{app:prelim}--\ref{app:inexact}: the structural results in
Appendix~\ref{app:saddle}, the deterministic analysis in
Appendix~\ref{app:det}, the stochastic analysis in Appendix~\ref{app:stoch},
the iteration-wise analysis in Appendix~\ref{app:reg}, the uniform-convexity
conversions in Appendix~\ref{app:lp} and the inexact-oracle analysis in
Appendix~\ref{app:inexact}; explicit constants are given throughout. The
method is fully specified by Algorithms~\ref{alg:main} and~\ref{alg:stoch},
the closed-form weight update~\eqref{eq:regeg} and the clipping recurrence of
Algorithm~\ref{alg:clip}. Appendix~\ref{app:expdetails} describes the
architectures, data construction, reported hyperparameters and tuning grids,
seed counts, validation splits, reporting protocol, ablations, and empirical
encoder-only gap. The $\Delta m\%$ values are computed from the per-task metrics with the reference
values given there. An anonymous implementation of all methods and experiments
is available at \url{https://anonymous.4open.science/r/LMO-MOO-D648/}.

\bibliography{refs}
\bibliographystyle{iclr2027_conference}
\newpage
\appendix
\section{Preliminaries for the proofs}\label{app:prelim}

Throughout, $\pn\cdot$ is the primal norm on $\R^d$, $\dn\cdot$ its dual,
$B=\{v:\pn v\le R\}$, and $\lmo$ is as in \eqref{eq:lmo}. We write
$A:=\gF(x)$ for the linear map $\R^K\to\R^d$, $Aw=\sum_i w_i\nabla f_i(x)$, and
$A^\top$ for its adjoint, so $(A^\top d)_i=\ip{\nabla f_i(x)}{d}$. We record
three elementary facts.

\begin{lemma}[Descent lemma in a norm pair]\label{lem:descent}
Under Assumption~\ref{as:smooth}, for all $x,y\in\R^d$ and each $i$,
\[
f_i(y)\;\le\;f_i(x)+\ip{\nabla f_i(x)}{y-x}+\tfrac{L}{2}\pn{y-x}^2 .
\]
\end{lemma}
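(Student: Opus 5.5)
The plan is to reduce the inequality to the scalar case and integrate along the segment from $x$ to $y$. Fix $i$ and set $h(s):=f_i(x+s(y-x))$ for $s\in[0,1]$. Since $f_i$ is differentiable, $h$ is differentiable with $h'(s)=\ip{\nabla f_i(x+s(y-x))}{y-x}$.

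The first step is to check that $h'$ is continuous, so that the fundamental theorem of calculus applies. Assumption~\ref{as:smooth} makes $\nabla f_i$ Lipschitz from $(\R^d,\pn\cdot)$ to $(\R^d,\dn\cdot)$. All norms on $\R^d$ are equivalent, so $\nabla f_i$ is continuous and hence so is $h'$. This gives
\[
f_i(y)-f_i(x)-\ip{\nabla f_i(x)}{y-x}=\int_0^1\ip{\nabla f_i(x+s(y-x))-\nabla f_i(x)}{y-x}\,ds .
\]

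The second step bounds the integrand. H\"older's inequality in the norm pair gives $\ip{g}{v}\le\dn g\pn v$. Combined with Assumption~\ref{as:smooth}, the integrand is at most
\[
\dn{\nabla f_i(x+s(y-x))-\nabla f_i(x)}\,\pn{y-x}\le L s\pn{y-x}^2 .
\]
Integrating $Ls$ over $[0,1]$ yields $\tfrac L2\pn{y-x}^2$, which is the claim.

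No step is a real obstacle; the only point needing care is the continuity of $h'$ in the first step. If one prefers to avoid the integral altogether, the mean value theorem can be applied to $\phi(s):=h(s)-h(0)-s\,h'(0)-\tfrac L2 s^2\pn{y-x}^2$. By the same H\"older bound, $\phi'(s)\le0$ on $[0,1]$, and since $\phi(0)=0$ this gives $\phi(1)\le0$.
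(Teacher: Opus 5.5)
Your proposal is correct and follows the paper's proof essentially verbatim: the fundamental theorem of calculus along the segment, H\"older's inequality in the norm pair, the smoothness bound, and $\int_0^1 Ls\,ds=L/2$. Your continuity remark, and the mean-value alternative that avoids needing it, only make explicit a regularity point the paper leaves implicit.
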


\begin{proof}
By the fundamental theorem of calculus along the segment,
\[
f_i(y)-f_i(x)-\ip{\nabla f_i(x)}{y-x}
=\int_0^1\ip{\nabla f_i(x+t(y-x))-\nabla f_i(x)}{y-x}\,dt .
\]
Applying H\"older's inequality $\ip g v\le\dn g\pn v$ and then
Assumption~\ref{as:smooth} with $\pn{(x+t(y-x))-x}=t\pn{y-x}$,
\[
\le\int_0^1\dn{\nabla f_i(x+t(y-x))-\nabla f_i(x)}\pn{y-x}\,dt
\le\int_0^1 Lt\pn{y-x}^2dt=\tfrac{L}{2}\pn{y-x}^2 . \qedhere
\]
\end{proof}

\begin{lemma}[Gradient bound from the function value]\label{lem:gradbound}
Under Assumptions~\ref{as:smooth}--\ref{as:lb},
$\dn{\nabla f_i(x)}^2\le 2L\big(f_i(x)-f_i^\star\big)$ for all $x$ and $i$.
\end{lemma}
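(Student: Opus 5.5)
The plan is to apply the descent lemma (Lemma~\ref{lem:descent}) at a well-chosen point and then use the lower bound $f_i^\star$. Fix $x$ and $i$, and write $g:=\nabla f_i(x)$. If $g=0$ there is nothing to prove, so assume $g\ne0$. By the definition of the dual norm, $\dn g=\sup_{\pn v\le1}\ip g v$. In finite dimensions the primal unit ball is compact, so the supremum is attained at some $v$ with $\pn v=1$ and $\ip g v=\dn g$. We take the descent direction $-v$ and the trial point $y=x-s v$ for a step length $s>0$.

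Lemma~\ref{lem:descent} then gives
\[
f_i^\star\le f_i(y)\le f_i(x)-s\dn g+\tfrac L2 s^2 .
\]
Minimizing the right-hand side over $s>0$ at $s=\dn g/L$ yields
\[
f_i^\star\le f_i(x)-\dn g^2/(2L).
\]
Rearranging gives $\dn g^2\le 2L(f_i(x)-f_i^\star)$. This uses $L>0$ from Assumption~\ref{as:lb}.

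The only point needing care is the existence of a maximizing unit vector $v$. If one prefers not to invoke compactness, take $v_n$ with $\pn{v_n}\le1$ and $\ip g{v_n}\to\dn g$. Running the same argument with $y=x-s v_n$ and $\pn{v_n}^2\le1$ gives
\[
f_i^\star\le f_i(x)-s\ip g{v_n}+\tfrac L2 s^2
\]
for each $n$, and letting $n\to\infty$ recovers the same bound. No other property of the norm is used, so the argument holds for every norm pair.
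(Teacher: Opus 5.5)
Your proof is correct and is essentially the paper's argument. You take a norming unit vector (the paper negates a maximizer and steps along $+v$; you step along $-v$), apply the descent lemma at the trial point, bound below by $f_i^\star$, and optimize at $s=\dn{\nabla f_i(x)}/L$. Your approximating-sequence variant is also valid, though unnecessary in finite dimensions where the unit ball is compact.
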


\begin{proof}
If $\nabla f_i(x)=0$ there is nothing to prove, so assume $\nabla f_i(x)\ne0$.
Since $\R^d$ is finite-dimensional the primal unit ball is compact, so the
supremum defining $\dn{\nabla f_i(x)}$ is attained: there is $v$ with $\pn v=1$
and $\ip{\nabla f_i(x)}{v}=-\dn{\nabla f_i(x)}$ (negate a maximizer). For $s>0$,
Lemma~\ref{lem:descent} applied at $y=x+sv$ gives
\[
f_i^\star\;\le\;f_i(x+sv)\;\le\;f_i(x)-s\,\dn{\nabla f_i(x)}+\tfrac{L}{2}s^2 .
\]
Minimizing the right-hand side over $s>0$ at $s=\dn{\nabla f_i(x)}/L$ yields
$f_i^\star\le f_i(x)-\dn{\nabla f_i(x)}^2/(2L)$, which rearranges to the claim.
\end{proof}

\begin{lemma}[Subgradient of the weight objective]\label{lem:subgrad}
Fix $x$ and let $\psi(w):=\dn{Aw}$ on $\R^K$. For every $d\in\lmo(Aw)$ we have
$-A^\top d/R\in\partial\psi(w)$; componentwise, $-g/R\in\partial\psi(w)$ where
$g_i=\ip{d}{\nabla f_i(x)}$.
\end{lemma}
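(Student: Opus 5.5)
We verify the subgradient inequality directly: we show that for every $u\in\R^K$,
\[
\psi(u)\;\ge\;\psi(w)+\ip{-A^\top d/R}{u-w}.
\]
The argument uses only the two properties of the oracle recorded in \eqref{eq:lmo}, namely $\pn d\le R$ and $\ip{Aw}{d}=-R\dn{Aw}$. Convexity of $\psi$ is not needed separately. It holds anyway, since $\psi$ is a norm composed with a linear map, but exhibiting a global affine minorant that touches $\psi$ at $w$ is exactly the definition of a subgradient.

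The first step rewrites the linear term through the adjoint. We have $\ip{-A^\top d/R}{u-w}=-\ip{d}{Au}/R+\ip{d}{Aw}/R$. By the oracle identity the second summand equals $-\dn{Aw}=-\psi(w)$. The right-hand side of the target inequality therefore collapses to $-\ip{d}{Au}/R$.

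The second step bounds this term. Since $-d/R$ lies in the primal unit ball, the definition of the dual norm (or H\"older) gives $\ip{Au}{-d/R}\le\dn{Au}\pn{d}/R\le\psi(u)$. This is exactly the required inequality. The componentwise form follows from $(A^\top d)_i=\ip{\nabla f_i(x)}{d}=g_i$, using the adjoint convention fixed at the start of Appendix~\ref{app:prelim}.

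There is essentially no obstacle here. The only points needing care are sign bookkeeping, because the minus sign sits inside the oracle, and the degenerate case $Aw=0$. In that case the identity reads $0=0$ for any admissible $d$, including the convention $\lmo(0)=0$. The argument still goes through, since it only needs $\pn d\le R$, and then $-A^\top d/R$ is a subgradient of the norm-like function at a zero.
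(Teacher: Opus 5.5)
Your proof is correct, and it takes a more elementary route than the paper's. The paper writes $\dn{\cdot}$ as $\frac1R$ times the support function of the symmetric ball $B$. It then quotes the standard formula $\partial\dn{\cdot}(h)=\frac1R\argmax_{v\in B}\ip{h}{v}=-\frac1R\lmo(h)$ and applies the chain-rule inclusion $A^\top\partial\dn{\cdot}(Aw)\subseteq\partial\psi(w)$. You bypass both facts and verify the global affine minorant directly: the oracle identity cancels $\psi(w)$, and H\"older with $\pn{d}\le R$ closes the inequality.

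Your version is self-contained and shows that only $\pn{d}\le R$ and $\ip{Aw}{d}=-R\dn{Aw}$ are used. This is the same computation the paper falls back on when $D_t\notin\lmo(A_tw_t)$, in Corollary~\ref{cor:regret-stoch} and Proposition~\ref{prop:inexact}.

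The paper's version gives the set equality $\partial\dn{\cdot}(h)=-\frac1R\lmo(h)$ and places the lemma inside standard subdifferential calculus. That is the viewpoint used in Proposition~\ref{prop:selection} to pass from differentiability of $\dn{\cdot}$ at $G$ to $\partial\psi(w)=\{-g/R\}$. That step needs the equality form of the chain rule, which goes beyond the membership that either proof of the lemma establishes.
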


\begin{proof}
Because $B=-B$ is symmetric, the dual norm is the support function
$\dn{h}=\tfrac1R\max_{v\in B}\ip h v$, which is convex with
$\partial\dn\cdot(h)=\tfrac1R\argmax_{v\in B}\ip h v
=-\tfrac1R\argmin_{v\in B}\ip h v=-\tfrac1R\lmo(h)$.
The chain rule for the composition of a convex function with a linear map gives
$A^\top\partial\dn\cdot(Aw)\subseteq\partial\psi(w)$, whence the claim.
\end{proof}

Lemma~\ref{lem:subgrad} justifies the sign in line~6 of Algorithm~\ref{alg:main}: since $-g_t/R$ is a
subgradient of $\psi$ at $w_t$, a mirror-descent step \emph{minimizing} $\psi$
moves along $-\eta(-g_t/R)$, i.e. along $+\eta g_t$ after absorbing $R$ into
$\eta$. Note that the lemma holds for \emph{every} element of $\lmo(Aw)$, so no
selection rule is needed for our analysis.

\section{Structural properties of the CA direction}\label{app:saddle}

\begin{proof}[Proof of Proposition~\ref{prop:unique}]
The set $S=\{Aw:w\in\simplex\}=\conv\{\nabla f_1(x),\dots,\nabla f_K(x)\}$ is
convex and compact. Let $m:=\min_{h\in S}\dn h$ and suppose $g_1,g_2\in S$ both
attain $m$ with $g_1\ne g_2$. If $m=0$ then $g_1=g_2=0$, a contradiction; so
$m>0$. Then $g_1/m$ and $g_2/m$ are distinct points of the dual unit sphere, and
strict convexity of the dual unit ball gives
$\dn{\tfrac{g_1+g_2}{2m}}<1$, i.e. $\dn{\tfrac{g_1+g_2}{2}}<m$. But
$\tfrac{g_1+g_2}{2}\in S$ by convexity, contradicting minimality of $m$. Hence
the minimizing value $Aw^\star$ is unique.

For the converse, suppose the dual unit ball is not strictly convex: there
are distinct $a,b$ with $\dn a=\dn b=1$ and $\dn{m}=1$ for $m:=(a+b)/2$. Let
$\varphi$ be a norming functional at $m$, i.e. $\varphi(m)=1$ and
$|\varphi(y)|\le\dn y$ for all $y$ (it exists by compactness of the primal
unit ball, since $\dn m=\max_{\pn v\le1}\ip mv$). Then $\varphi(a)\le1$,
$\varphi(b)\le1$ and $\tfrac12(\varphi(a)+\varphi(b))=\varphi(m)=1$ force
$\varphi(a)=\varphi(b)=1$, so for every $\theta\in[0,1]$,
\[
1=\varphi\big(\theta a+(1-\theta)b\big)\le\dn{\theta a+(1-\theta)b}\le\theta\dn a+(1-\theta)\dn b=1 ,
\]
and the whole segment $[a,b]$ lies on the unit sphere. Take $K=2$ with
$\nabla f_1(x)=a$, $\nabla f_2(x)=b$: then $\dn{Aw}=1$ for every $w\in\simplex$,
every $w$ is optimal, and $Aw^\star$ ranges over $[a,b]$. For $\dn\cdot=\ell_1$
one may take $a=(1,0)$, $b=(0,1)$; for the nuclear norm, $a=E_{11}$,
$b=E_{22}$. Hence strict convexity of the dual ball is necessary and sufficient
for the optimal blended gradient to be unique in every instance.
\end{proof}

\begin{proof}[Proof of Proposition~\ref{prop:scalarization}]
The function $x\mapsto F(x)u=\sum_i u_if_i(x)$ is differentiable with gradient
$\gF(x)u$, so first-order stationarity is $\gF(\bar x)u=0$ (a minimizer is a
special case), and then $\min_{w\in\simplex}\dn{\gF(\bar x)w}\le\dn{\gF(\bar x)u}=0$,
attained at $w=u$.
\end{proof}

We now record the selection criterion mentioned in Section~\ref{sec:setup}. Let
$\Phi_x(w,d)=\ip{\gF(x)w}{d}$ on $\simplex\times B$. Since
$\min_{d\in B}\Phi_x(w,d)=-R\dn{\gF(x)w}$ by definition of the dual norm,
Sion's minimax theorem \citep{sion1958minimax} gives
\begin{equation}\label{eq:saddle}
\max_{w\in\simplex}\min_{d\in B}\Phi_x(w,d)=\min_{d\in B}\max_{w\in\simplex}\Phi_x(w,d)
=-R\min_{w\in\simplex}\dn{\gF(x)w},
\end{equation}
so the CA problem is $\max_w\min_d\Phi_x$, with $w$ the maximizing and $d$ the
minimizing player.

\begin{proposition}[When the oracle output is conflict-avoidant]\label{prop:selection}
Fix $x$, let $w\in\simplex$, $G=\gF(x)w$, and let $D\in\lmo(G)$ with
$g_i=\ip{\nabla f_i(x)}{D}$. Then $(w,D)$ is a saddle point of $\Phi_x$ on
$\simplex\times B$ if and only if
\begin{equation}\label{eq:star}
\max_{i\in[K]}g_i \;=\; -R\,\dn{G} .
\end{equation}
When \eqref{eq:star} holds, every objective improves by at least $R\dn{G}$
along $D$, which by \eqref{eq:saddle} is the optimal guaranteed worst-case
improvement over $B$. If $\dn\cdot$ is differentiable at $G$ then $\lmo(G)$ is a
singleton and \eqref{eq:star} holds automatically for any minimizer $w$ of
$\psi$; if not, \eqref{eq:star} can fail for some elements of $\lmo(G)$.
\end{proposition}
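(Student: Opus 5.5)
The plan is to work directly from the saddle-point characterization. For fixed $w$, the map $d\mapsto\Phi_x(w,d)$ is minimized over $B$ exactly at $D\in\lmo(G)$, with value $-R\dn G$. So $(w,D)$ is a saddle point iff $w$ also maximizes $\Phi_x(\cdot,D)$ over $\simplex$. Since $\Phi_x(\cdot,D)$ is linear with coefficients $g_i$, its maximum over the simplex is $\max_i g_i$. Thus the saddle condition reads $\ip{g}{w}=\max_i g_i$.

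The oracle identity \eqref{eq:lmo} gives $\ip{g}{w}=\ip{G}{D}=-R\dn G$. So the saddle condition is equivalent to $\max_i g_i=-R\dn G$, which is \eqref{eq:star}. This settles the equivalence.

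\textbf{Descent claim.} Under \eqref{eq:star}, every $i$ satisfies $\ip{\nabla f_i(x)}{D}\le -R\dn G$, so each objective's first-order change is at most $-R\dn G$. A saddle point has value equal to both sides of \eqref{eq:saddle}. Hence $\dn G=\min_w\dn{\gF(x)w}$, and $-R\dn G=\min_{d\in B}\max_i\ip{\nabla f_i(x)}{d}$ is the optimal worst-case improvement.

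\textbf{Differentiable case.} If $\dn\cdot$ is differentiable at $G$, then by the subdifferential computation in Lemma~\ref{lem:subgrad}, $\partial\dn\cdot(G)=-\lmo(G)/R$ is a singleton, so $\lmo(G)$ is a singleton. Now let $w$ minimize $\psi$. By \eqref{eq:saddle} and compactness there is a saddle point $(w,d^\circ)$: Sion's theorem gives existence of some minimizing $d^\circ$ for the min-max side, and pairing it with any maximizer $w$ of the max-min side yields a saddle point. Saddle points have product structure, which is the standard fact I will invoke. In particular $d^\circ$ minimizes $\Phi_x(w,\cdot)$, so $d^\circ\in\lmo(G)=\{D\}$. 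Hence $(w,D)$ is a saddle point and \eqref{eq:star} holds.

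\textbf{Failure case.} The main obstacle is exhibiting failure when differentiability is lost; I would use the sign oracle. Take $\ell_\infty$ primal with $\ell_1$ dual, $K=2$, $R=1$, $\nabla f_1=(1,0)$ and $\nabla f_2=(1,1)$. Then $\dn{\gF w}=1+w_2$, so the minimizer is $w=e_1$ with $G=(1,0)$. The set $\lmo(G)$ consists of all $(-1,s)$ with $s\in[-1,1]$. Choosing $s=1$ gives $g=(-1,0)$, so $\max_i g_i=0\ne-1=-R\dn G$, and \eqref{eq:star} fails. I would check this arithmetic and keep it as the counterexample.
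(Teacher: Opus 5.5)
Your proof is correct. The equivalence, the descent claim and the failure case follow the paper's structure; your argument for the descent claim goes through the saddle value to show that $-R\dn{G}$ is the optimal worst-case rate, which the paper leaves to \eqref{eq:saddle}.

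The differentiable case is where your route genuinely differs. The paper stays inside the inner problem. Differentiability of $\dn\cdot$ at $G$ makes $\partial\psi(w)=\{-g/R\}$, so the first-order optimality condition of $w$ for $\min_{\simplex}\psi$ reads $\ip{-g/R}{u-w}\ge0$ for all $u\in\simplex$. That is literally $\max_i g_i=\ip{g}{w}=-R\dn{G}$.

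You instead use the interchangeability of saddle points:
\begin{itemize}
\item a minimizer $w$ of $\psi$ is a maximin strategy;
\item it pairs with any minimax $d^\circ$ into a saddle point (Sion for equal values, compactness for attainment);
\item $d^\circ$ must then lie in $\lmo(G)$, and the singleton property forces $d^\circ=D$.
\end{itemize}
Your argument needs only that $\lmo(G)$ is a singleton, not a description of all of $\partial\psi(w)$. It also yields the small byproduct that $D$ is then the unique minimax (conflict-avoidant) direction. The paper's argument is shorter and directly identifies the condition with the optimality condition of the inner problem.

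Your counterexample also differs. You take $\nabla f_1=(1,0)$, $\nabla f_2=(1,1)$, whereas the paper reuses $a=(1,0)$, $b=(0,1)$ from Proposition~\ref{prop:unique}, where every $w$ is optimal. Yours has a unique inner minimizer $w=e_1$, so it shows failure even without degeneracy of the inner problem. The paper's selection gives active degradation $g_2=R>0$, while yours only gives $g_2=0$; both suffice for the statement.
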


\begin{proof}
$D\in\lmo(G)$ is exactly the statement that $D$ minimizes $\Phi_x(w,\cdot)$ over
$B$. The remaining saddle condition is that $w$ maximizes
$\Phi_x(\cdot,D)=\ip{g}{\cdot}$ over $\simplex$, i.e. that $w$ is supported on
$\argmax_i g_i$, i.e. that $\ip g w=\max_i g_i$. Since
$\ip g w=\ip{\gF(x)w}{D}=\ip{G}{D}=-R\dn G$ by \eqref{eq:lmo}, this is
\eqref{eq:star}. Under \eqref{eq:star} we get $-g_i\ge R\dn G$ for every $i$,
which is the stated improvement. If $\dn\cdot$ is differentiable at $G$ then
$\partial\dn\cdot(G)$ is a singleton, so by Lemma~\ref{lem:subgrad}
$\partial\psi(w)=\{-g/R\}$ and first-order optimality of $w$ on $\simplex$
coincides with \eqref{eq:star}.

For the failure, take the instance in the proof of
Proposition~\ref{prop:unique} with $w=e_1$, so $G=(1,0)$ and
$\lmo(G)=\{(-R,d_2):|d_2|\le R\}$, every element of which satisfies
$\ip G v=-R=-R\|G\|_1$. Here $g=(-R,d_2)$ and \eqref{eq:star} reads
$\max(-R,d_2)=-R$, forcing $d_2=-R$. The selection $d_2=+R$ gives $g_2=R>0$:
the second objective is actively degraded, while the choice $D=(-R,-R)$
improves both by $R$.
\end{proof}

The usual sign selection with $\sign(0)=0$ can also harm a task at an inner
minimizer. Take $R=1$, primal $\ell_\infty$, gradients
$a=(2,-1)$, $b=(-2,3)$, and $w=(1/2,1/2)$. Then $Aw=(0,1)$ and
$\|A(q,1-q)\|_1=|4q-2|+|3-4q|\ge1$ for all $q\in[0,1]$, with equality at
$q=1/2$. Thus the inner gap is zero, but $D=-\sign(Aw)=(0,-1)$ gives
$\ip{a}{D}=1>0$. These gradients occur at the origin for the lower-bounded smooth
quadratics $f_1(x)=\tfrac12\|x\|_2^2+\ip{a}{x}$ and
$f_2(x)=\tfrac12\|x\|_2^2+\ip{b}{x}$. Inner optimality therefore does not alone
certify per-task descent, even for a conventional oracle selection.

We stress that Theorem~\ref{thm:det} and Theorem~\ref{thm:stoch} do not require
\eqref{eq:star}. The reason is that the mirror-descent bound
\eqref{eq:md} holds for an \emph{arbitrary} vector $g_t$, and by
Lemma~\ref{lem:subgrad} every element of $\lmo(G_t)$ produces a valid
subgradient; the only property of $D_t$ used elsewhere is the identity
$\ip{G_t}{D_t}=-R\dn{G_t}$, which likewise holds for every element. Selection
therefore affects the per-iteration guarantee of
Proposition~\ref{prop:selection}, not the average-case rates.

\section{Proofs for the deterministic setting}\label{app:det}

\begin{proof}[Proof of Lemma~\ref{lem:onestep}]
Fix $w\in\simplex$. Multiplying Lemma~\ref{lem:descent} (with $y=x_{t+1}$,
$x=x_t$) by $w_i\ge0$ and summing over $i$, using $\sum_iw_i=1$ for the
quadratic term,
\[
F(x_{t+1})w\;\le\;F(x_t)w+\ip{\gF(x_t)w}{x_{t+1}-x_t}+\tfrac{L}{2}\pn{x_{t+1}-x_t}^2 .
\]
By line~7 of Algorithm~\ref{alg:main}, $x_{t+1}-x_t=\gamma D_t$ with
$\pn{D_t}\le R$, so the last term is at most $\tfrac12L\gamma^2R^2$. For the
inner product, decompose $\gF(x_t)w=G_t+\gF(x_t)(w-w_t)$ and use
\eqref{eq:lmo}:
\[
\ip{\gF(x_t)w}{D_t}=\ip{G_t}{D_t}+\sum_i(w_i-w_{t,i})\ip{\nabla f_i(x_t)}{D_t}
=-R\dn{G_t}+\ip{g_t}{w-w_t}. \qedhere
\]
\end{proof}

\begin{proof}[Proof of Lemma~\ref{lem:md}]
Let $\Phi(w)=\sum_iw_i\ln w_i$ with $\nabla\Phi(w)=(1+\ln w_i)_i$, whose
Bregman divergence is $D_\Phi(u,v)=\KL(u\|v)$ for $u\in\simplex$ and $v$ in the
relative interior (with $0\ln0=0$). Write $C:=\simplex^\kappa$ and
\[
h(w):=-\eta\ip{g_t}{w}+\rho\,r(w)+\KL(w\|w_t),\qquad \rho=\eta R\tau,
\]
so that $w_{t+1}=\argmin_Ch$. Both $r=\KL(\cdot\|u_K)$ and $\KL(\cdot\|w_t)$
have Bregman divergence $\KL(u\|v)$ with respect to themselves, and the linear
term contributes nothing, so $h$ is $(1+\rho)$-strongly convex relative to
$\Phi$: for all $u\in\simplex$ and $v$ in the relative interior,
\begin{equation}\label{eq:h-relsc}
h(u)=h(v)+\ip{\nabla h(v)}{u-v}+(1+\rho)\KL(u\|v).
\end{equation}
The function $h$ is differentiable at $w_{t+1}$: if $\kappa>0$, every point of
$C$ has all coordinates at least $\kappa>0$, so $h$ is differentiable on all of
$C$, including at points with some $w_i=\kappa$ (where the clipped solution may
well lie); if $\kappa=0$, the minimizer has all coordinates positive because
$\partial_i\KL(w\|w_t)=\ln(w_i/w_{t,i})+1\to-\infty$ as $w_i\downarrow0$ while
$w_t$ has positive coordinates. Hence first-order optimality over the convex
set $C$ gives
$\ip{\nabla h(w_{t+1})}{u-w_{t+1}}\ge0$ for all $u\in C$. Hence, by
\eqref{eq:h-relsc} at $v=w_{t+1}$,
\[
h(u)\ge h(w_{t+1})+(1+\rho)\KL(u\|w_{t+1})\qquad(u\in C).
\]
Expanding $h$ on both sides and rearranging,
\begin{equation}\label{eq:md1}
\eta\ip{g_t}{u-w_{t+1}}\le\KL(u\|w_t)-(1+\rho)\KL(u\|w_{t+1})-\KL(w_{t+1}\|w_t)
+\rho\big(r(u)-r(w_{t+1})\big),
\end{equation}
with equality when $\kappa=0$, since then $C=\simplex$ and
$\nabla h(w_{t+1})$ is a multiple of $\mathbf 1$, orthogonal to $u-w_{t+1}$.
Next, split $\ip{g_t}{u-w_t}=\ip{g_t}{u-w_{t+1}}+\ip{g_t}{w_{t+1}-w_t}$ and
bound the second term by H\"older, Young's inequality
$ab\le\tfrac12a^2+\tfrac12b^2$ and Pinsker's inequality
$\|p-q\|_1^2\le2\KL(p\|q)$:
\begin{equation}\label{eq:md2}
\eta\ip{g_t}{w_{t+1}-w_t}\le\eta\|g_t\|_\infty\|w_{t+1}-w_t\|_1
\le\tfrac12\eta^2\|g_t\|_\infty^2+\tfrac12\|w_{t+1}-w_t\|_1^2
\le\tfrac12\eta^2\|g_t\|_\infty^2+\KL(w_{t+1}\|w_t).
\end{equation}
Adding \eqref{eq:md1} and \eqref{eq:md2}, the terms $\pm\KL(w_{t+1}\|w_t)$
cancel and we obtain \eqref{eq:md}. The claims about $u=u_K$ and general $u$
follow from $r(u_K)=0$, $r\ge0$ and $r\le\ln K$ on $\simplex$. At
$\tau=\kappa=0$ we have $\rho=0$ and $C=\simplex$, and \eqref{eq:md1} is the
three-point identity
$\eta\ip{g_t}{u-w_{t+1}}=\KL(u\|w_t)-\KL(u\|w_{t+1})-\KL(w_{t+1}\|w_t)$ of
exponentiated gradient; the iterates then stay in the relative interior since
$w_0$ is uniform and the update multiplies by positive factors, so
\eqref{eq:md} holds for every $u\in\simplex$, including the vertices $u=e_i$
used in Lemma~\ref{lem:M}.
\end{proof}

\begin{proof}[Proof of Lemma~\ref{lem:M}]
Set $\bar\gamma:=\gamma R=\sqrt{2\Delta/(LT)}$ and
$\bar\eta:=\eta R=\sqrt{2\ln K}/(M\sqrt T)$, and define
\[
\mathcal C \;:=\; \frac{\bar\gamma\ln K}{\bar\eta}
\;+\;\frac{T\bar\gamma\bar\eta M^2}{2}\;+\;\frac{TL\bar\gamma^2}{2}.
\]
Substituting the step sizes,
\begin{equation}\label{eq:Ccalc}
\frac{\bar\gamma\ln K}{\bar\eta}=M\sqrt{\tfrac{\Delta\ln K}{L}},\qquad
\frac{T\bar\gamma\bar\eta M^2}{2}=M\sqrt{\tfrac{\Delta\ln K}{L}},\qquad
\frac{TL\bar\gamma^2}{2}=\Delta,
\end{equation}
so $\mathcal C=2M\sqrt{\Delta\ln K/L}+\Delta$.

We prove by induction on $t$ that $\dn{\nabla f_i(x_t)}\le M$ for all $i$.

\emph{Base case.} By Assumption~\ref{as:lb} and Lemma~\ref{lem:gradbound},
$\dn{\nabla f_i(x_0)}\le\sqrt{2L\Delta}$. Since $K\ge2$ we have
$4\ln K\ge4\ln2>2$, so $2\sqrt{L\Delta\ln K}\ge\sqrt{2L\Delta}$ and the
hypothesis on $M$ gives $\dn{\nabla f_i(x_0)}\le M$.

\emph{Inductive step.} Suppose $\dn{\nabla f_i(x_s)}\le M$ for all $i$ and all
$s\le t$. Then $\|g_s\|_\infty\le\max_i\pn{D_s}\dn{\nabla f_i(x_s)}\le RM$ for
$s\le t$ by H\"older. Apply Lemma~\ref{lem:onestep} with $w=e_i$ and add
$\gamma/\eta$ times \eqref{eq:md} with $u=e_i$; summing over $s=0,\dots,t$ and
using $\KL(e_i\|w_0)=\ln K$, $\KL(e_i\|w_{t+1})\ge0$ and $t+1\le T$,
\[
f_i(x_{t+1})-f_i(x_0)\;\le\;-\bar\gamma\sum_{s=0}^{t}\dn{G_s}
+\frac{\bar\gamma\ln K}{\bar\eta}+\frac{T\bar\gamma\bar\eta M^2}{2}
+\frac{TL\bar\gamma^2}{2}\;\le\;\mathcal C,
\]
where we dropped the non-positive first sum. Hence
$f_i(x_{t+1})-f_i^\star\le\Delta+\mathcal C=2\Delta+2M\sqrt{\Delta\ln K/L}$, and
Lemma~\ref{lem:gradbound} gives
\[
\dn{\nabla f_i(x_{t+1})}^2\;\le\;2L\Big(2\Delta+2M\sqrt{\tfrac{\Delta\ln K}{L}}\Big)
\;=\;4L\Delta+4M\sqrt{L\Delta\ln K}.
\]
It therefore suffices that $M^2\ge4L\Delta+4M\sqrt{L\Delta\ln K}$, i.e. that $M$
be at least the positive root of $M^2-4\sqrt{L\Delta\ln K}\,M-4L\Delta=0$, namely
\[
M\;\ge\;2\sqrt{L\Delta\ln K}+\sqrt{4L\Delta\ln K+4L\Delta}
\;=\;2\sqrt{L\Delta\ln K}+2\sqrt{L\Delta(\ln K+1)},
\]
which is the hypothesis. This closes the induction and shows
$M=O(\sqrt{L\Delta\ln K})$ suffices. Note that $M$ is defined by an explicit
formula in $L,\Delta,K$ only, so the step sizes are well defined and there is
no circularity.
\end{proof}

\begin{proof}[Proof of Theorem~\ref{thm:det}]
Take the uniform comparator $u=u_K$, which lies in $\simplex^\kappa$ for every
$\kappa\le1/K$. Adding Lemma~\ref{lem:onestep} with $w=u_K$ to $\gamma/\eta$
times \eqref{eq:md} with $u=u_K$ gives, for each $t$, using $r(u_K)=0$,
$r(w_{t+1})\ge0$ and $(1+\rho)\KL\ge\KL$,
\[
F(x_{t+1})u_K\le F(x_t)u_K-\gamma R\dn{G_t}
+\frac\gamma\eta\Big(\KL(u_K\|w_t)-\KL(u_K\|w_{t+1})\Big)
+\frac{\gamma\eta}{2}\|g_t\|_\infty^2+\frac{L\gamma^2R^2}{2}.
\]
By hypothesis (or by Lemma~\ref{lem:M} when $\tau=\kappa=0$),
$\|g_t\|_\infty\le RM$ for all $t<T$. Summing over $t=0,\dots,T-1$,
telescoping the divergences, and using $\KL(u_K\|w_0)=0$ (as $w_0=u_K$) and
$\KL(u_K\|w_T)\ge0$,
\[
F(x_T)u_K-F(x_0)u_K\le-\gamma R\sum_{t=0}^{T-1}\dn{G_t}
+\frac{T\gamma\eta R^2M^2}{2}+\frac{TL\gamma^2R^2}{2}.
\]
Moreover $F(x_0)u_K-F(x_T)u_K\le\frac1K\sum_i(f_i(x_0)-f_i^\star)\le\Delta$ by
Assumption~\ref{as:lb}. Rearranging and dividing by $\gamma RT$ gives
\eqref{eq:det-master}; neither $\tau$, $\kappa$ nor $K$ has entered. With
$\bar\gamma=\gamma R=\sqrt{2\Delta/(LT)}$ the first and third terms of
\eqref{eq:det-master} are each $\sqrt{\Delta L/(2T)}$, summing to
$\sqrt{2\Delta L/T}$; with $\bar\eta=\eta R=\sqrt{2\ln K}/(M\sqrt T)$ the
second is $M\sqrt{\ln K/(2T)}$. Adding gives \eqref{eq:det}. The right-hand
side of \eqref{eq:det} is at most $\epsilon$ once
$T\ge(\sqrt{2\Delta L}+M\sqrt{\ln K/2})^2/\epsilon^2$, and
$(\sqrt{2\Delta L}+M\sqrt{\ln K/2})^2\le4\Delta L+M^2\ln K$, so
$T=\lceil(4\Delta L+M^2\ln K)/\epsilon^2\rceil$ iterations suffice. Every step
used only Assumptions~\ref{as:smooth}--\ref{as:lb} and the identity
\eqref{eq:lmo}; no property of $\pn\cdot$ beyond being a norm was invoked.
\end{proof}

\begin{corollary}[Average CA distance]\label{cor:ca}
Under the conditions of \eqref{eq:det},
$\tfrac1T\sum_{t}\mathrm{CA}_t\le 2\big(\sqrt{2\Delta L}+M\sqrt{\log K/2}\big)/\sqrt T$
for every measurable selection $w^\star(x_t)$.
\end{corollary}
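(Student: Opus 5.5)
The plan is a pointwise triangle inequality followed by averaging and an appeal to \eqref{eq:det}. Fix any measurable selection $t\mapsto w^\star(x_t)$ and write $d^\star(x_t)=\gF(x_t)w^\star(x_t)$ as in Definition~\ref{def:ca}. The triangle inequality for the dual norm gives
\[
\mathrm{CA}_t=\dn{\gF(x_t)w_t-d^\star(x_t)}\le\dn{\gF(x_t)w_t}+\dn{d^\star(x_t)} .
\]
Since $w^\star(x_t)$ minimizes $w\mapsto\dn{\gF(x_t)w}$ over $\simplex$ and $w_t\in\simplex$, we have $\dn{d^\star(x_t)}=\min_{w\in\simplex}\dn{\gF(x_t)w}\le\dn{\gF(x_t)w_t}$. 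Hence $\mathrm{CA}_t\le2\psi_t(w_t)$ for every $t$. This inequality holds for every selection, because it uses only the optimal value and never the identity of the minimizer; that is why no uniqueness (Proposition~\ref{prop:unique}) is needed.

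Next, I will average this pointwise bound over $t=0,\dots,T-1$, which gives $\tfrac1T\sum_t\mathrm{CA}_t\le2S_T$. Under the conditions of \eqref{eq:det}, Theorem~\ref{thm:det} bounds $S_T$ by $(\sqrt{2\Delta L}+M\sqrt{\log K/2})/\sqrt T$. The trajectory bound $M$ is either assumed or, for $\tau=\kappa=0$, supplied by Lemma~\ref{lem:M}. Multiplying by two yields the claim.

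I expect no real obstacle. The only points to check are two. First, $\simplex$ is compact and $w\mapsto\dn{\gF(x_t)w}$ is continuous, so the minimum is attained and $w^\star(x_t)$ exists. Second, measurability of the selection plays no role in this deterministic pathwise argument; it is stated only for uniformity with the stochastic analogue.
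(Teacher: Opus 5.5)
Your proof is correct and matches the paper's argument: the triangle inequality and the optimality of $w^\star(x_t)$ give $\mathrm{CA}_t\le 2\dn{\gF(x_t)w_t}$ pathwise for every selection, and averaging this bound and applying Theorem~\ref{thm:det} yields the claim. Your added remarks, that the minimum is attained and that measurability plays no role in the deterministic setting, are accurate.
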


\begin{proof}[Proof of Corollary~\ref{cor:ca}]
Let $w^\star(x_t)$ be any minimizer of $w\mapsto\dn{\gF(x_t)w}$ over $\simplex$.
By the triangle inequality and then optimality,
\[
\mathrm{CA}_t=\dn{\gF(x_t)w_t-\gF(x_t)w^\star(x_t)}
\le\dn{\gF(x_t)w_t}+\dn{\gF(x_t)w^\star(x_t)}\le2\dn{\gF(x_t)w_t}.
\]
Averaging over $t$ and applying Theorem~\ref{thm:det} gives the claim. Only the
triangle inequality and the optimality of $w^\star$ were used, so the bound is
valid simultaneously for every measurable selection; in particular it does not
presuppose the uniqueness of $\gF(x_t)w^\star(x_t)$ discussed in
Proposition~\ref{prop:unique}.
\end{proof}

\begin{remark}\label{rem:hilbert}
In a Hilbert dual norm the constant $2$ can be removed: the first-order
optimality condition for $w^\star$ gives
$\ip{\gF w^\star}{\gF w_t-\gF w^\star}\ge0$, whence
$\dn{\gF w_t-\gF w^\star}^2\le\dn{\gF w_t}^2-\dn{\gF w^\star}^2\le\dn{\gF w_t}^2$.
This is the argument used in the Euclidean literature. It relies on the
parallelogram identity and is unavailable for a general dual norm, which is why
we state Corollary~\ref{cor:ca} with the triangle inequality instead. The same
identity gives the conversion used in Corollary~\ref{cor:gap}: writing
$a=\dn{\gF w_t}$ and $b=\dn{\gF w^\star}$,
\[
\mathrm{CA}_t^2\le a^2-b^2=(a+b)(a-b)\le2M\,\mathrm{gap}_t ,
\]
since $a,b\le M$ along the trajectory (Lemma~\ref{lem:M}) and
$a-b=\mathrm{gap}_t$. More generally, if the dual norm satisfies \eqref{eq:S2}
with power type $q$, then with $w^\star$ minimizing $\varphi_q(\gF w)$ over
$\simplex$ (the same minimizers as $\dn{\gF w}$), first-order optimality and
\eqref{eq:S2} give $\tfrac{\mu}{q}\mathrm{CA}_t^q\le\varphi_q(\gF w_t)-\varphi_q(\gF w^\star)
=\tfrac1q(a^q-b^q)\le a^{q-1}(a-b)\le M^{q-1}\mathrm{gap}_t$, i.e.
$\mathrm{CA}_t^q\le(q/\mu)M^{q-1}\mathrm{gap}_t$.
The direct inequalities $\mathrm{gap}_t\le\dn{\gF(x_t)w_t}$ and
$\mathrm{CA}_t\le2\dn{\gF(x_t)w_t}$ imply average-gap and average-CA
convergence whenever the actual blended-gradient residual converges in
average. This includes a fixed scalarization on its own convergent trajectory.
Corollary~\ref{cor:gap} gives an additional comparator-dependent decomposition;
it is not needed to infer average-gap convergence from Theorem~\ref{thm:det}.
\end{remark}

\begin{proof}[Proof of Theorem~\ref{thm:regret}]
Fix $u\in\simplex$ and write $\psi_t(w):=\dn{A_tw}$. The support-function
argument of Lemma~\ref{lem:subgrad} applies to any linear map $A_t$, with
$d=D_t\in\lmo(G_t)$ and $G_t=A_tw_t$. Thus $-g_t/R$ is a subgradient of
$\psi_t$ at $w_t$.
The subgradient inequality $\psi_t(u)\ge\psi_t(w_t)+\ip{-g_t/R}{u-w_t}$
rearranges to
\begin{equation}\label{eq:subgrad-step}
\psi_t(w_t)-\psi_t(u)\le\frac1R\ip{g_t}{u-w_t}\qquad\text{for every }t .
\end{equation}
Summing \eqref{eq:md} at $\tau=\kappa=0$ over $t=0,\dots,T-1$, telescoping and dropping
$-\KL(u\|w_T)\le0$,
\[
\eta\sum_{t=0}^{T-1}\ip{g_t}{u-w_t}\le\KL(u\|w_0)+\frac{\eta^2}{2}\sum_{t=0}^{T-1}\|g_t\|_\infty^2
\le\ln K+\frac{\eta^2TR^2M^2}{2},
\]
using $\KL(u\|w_0)\le\ln K$ for uniform $w_0$ and $\|g_t\|_\infty\le RM$ from
the column bound $\max_i\dn{A_te_i}\le M$ and H\"older's inequality.
Combining with \eqref{eq:subgrad-step} and dividing by $RT$,
\[
\frac1T\sum_{t=0}^{T-1}\big[\psi_t(w_t)-\psi_t(u)\big]
\le\frac{\ln K}{\eta RT}+\frac{\eta RM^2}{2}
=M\sqrt{\frac{2\ln K}{T}},
\]
the last equality by the choice $\eta R=\sqrt{2\ln K}/(M\sqrt T)$, as in the
proof of Theorem~\ref{thm:det}. This is \eqref{eq:regret}. Since it holds for
every $u\in\simplex$, it holds in particular for a minimizer of
$u\mapsto\frac1T\sum_t\psi_t(u)$ (which exists: each $\psi_t$ is continuous and
$\simplex$ is compact), the best fixed weighting in hindsight. The argument used
Lemma~\ref{lem:subgrad}, Lemma~\ref{lem:md} and the bound $\|g_t\|_\infty\le RM$
only. Hence it holds for arbitrary bounded matrix sequences, including
sequences depending on preceding iterates; no outer progress is needed.
\end{proof}

\begin{proof}[Proof of Corollary~\ref{cor:gap}]
Write $\psi_t(w)=\dn{\gF(x_t)w}$. For any $u\in\simplex$ and any $t$, by
definition of the gap,
\[
\mathrm{gap}_t=\big[\psi_t(w_t)-\psi_t(u)\big]+\Big[\psi_t(u)-\min_{w\in\simplex}\psi_t(w)\Big].
\]
Average over $t$ and apply \eqref{eq:regret} to the first bracket:
\[
\frac1T\sum_t\mathrm{gap}_t\le M\sqrt{\frac{2\ln K}{T}}
+\frac1T\sum_t\Big[\psi_t(u)-\min_{w}\psi_t(w)\Big]\qquad\text{for every }u\in\simplex .
\]
The second sum is a continuous function of $u$ on the compact set $\simplex$
and is non-negative termwise; minimizing over $u$ gives $V_T$ and hence
\eqref{eq:gap}. If some $\bar u$ minimizes $\psi_t$ for every $t$ then each
bracket vanishes at $u=\bar u$ and $V_T=0$. The Euclidean conversion
$\mathrm{CA}_t^2\le2M\,\mathrm{gap}_t$ is the remark preceding this proof;
averaging and using the concavity of $\sqrt{\cdot}$ (Jensen),
$\frac1T\sum_t\mathrm{CA}_t\le\frac1T\sum_t\sqrt{2M\,\mathrm{gap}_t}
\le\sqrt{2M\cdot\frac1T\sum_t\mathrm{gap}_t}$.
\end{proof}

\section{Proofs for the stochastic setting}\label{app:stoch}

\begin{algorithm}[t]
\caption{Entropic LMO-MGDA with momentum on the blended gradient ($\tau=\kappa=0$)}
\label{alg:stoch}
\begin{algorithmic}[1]
\Require $x_0$; step sizes $\gamma,\eta>0$; momentum $\alpha\in(0,1]$; radius $R$; horizon $T$
\State $w_0\gets(1/K,\dots,1/K)$;\quad $\mathcal M_{-1}\gets 0$
\For{$t=0,1,\dots,T-1$}
  \State $D_t\gets\lmo(\mathcal M_{t-1})$
    \Comment{oracle at the \emph{previous} buffer: $D_t$ is $\mathcal F_t$-measurable}
  \State sample $\xi_{t,i}$ and compute $\nabla f_i(x_t;\xi_{t,i})$ for $i\in[K]$
  \State $g_{t,i}\gets\ip{D_t}{\nabla f_i(x_t;\xi_{t,i})}$ for $i\in[K]$
  \State $w_{t+1,i}\gets w_{t,i}e^{\eta g_{t,i}}\big/\sum_jw_{t,j}e^{\eta g_{t,j}}$
    \Comment{line~6 of Algorithm~\ref{alg:main} at $\tau=\kappa=0$}
  \State $\widehat G_t\gets\sum_iw_{t,i}\nabla f_i(x_t;\xi_{t,i})$;\quad
         $\mathcal M_t\gets(1-\alpha_t)\mathcal M_{t-1}+\alpha_t\widehat G_t$,
         \ \ $\alpha_0=1$, $\alpha_t=\alpha$ for $t\ge1$
  \State $x_{t+1}\gets x_t+\gamma D_t$
\EndFor
\end{algorithmic}
\end{algorithm}

Only \emph{one} vector of dimension $d$ is carried between iterations. The
convention $\alpha_0=1$ removes the initialization bias of the moving average.
At $t=0$ the oracle is evaluated at $\mathcal M_{-1}=0$; with the convention
$\lmo(0)=0$ of Section~\ref{sec:setup} the first step is $D_0=0$, and the
argument below in fact allows any selection in $B$.

\paragraph{Filtration and notation.}
Let $\mathcal F_t$ be the $\sigma$-algebra generated by
$\xi_{0,\cdot},\dots,\xi_{t-1,\cdot}$ (and $x_0$), so that
$x_t$, $w_t$, $\mathcal M_{t-1}$ and hence $D_t$ are $\mathcal F_t$-measurable
while the $\xi_{t,i}$ are not. Write $A_t:=\gF(x_t)$ and
\[
\lambda_t:=\mathcal M_t-A_tw_t,\qquad
n_t:=\widehat G_t-A_tw_t=\sum_iw_{t,i}n_{t,i},\qquad
n_{t,i}:=\nabla f_i(x_t;\xi_{t,i})-\nabla f_i(x_t),
\]
together with the conventions $\lambda_{-1}:=\mathcal M_{-1}-A_0w_0=-A_0w_0$
and $\Theta_0:=0$, where for $t\ge1$
\[
\Theta_t:=A_{t-1}w_{t-1}-A_tw_t .
\]
By Assumption~\ref{as:noise}, $\E[n_{t,i}\mid\mathcal F_t]=0$ and
$\E[\|n_{t,i}\|_2^2\mid\mathcal F_t]\le\sigma^2$; consequently
$\E[n_t\mid\mathcal F_t]=0$. Since $w_t\in\simplex$ is
$\mathcal F_t$-measurable, the triangle inequality and Jensen's inequality for
the convex map $s\mapsto s^2$ with weights $w_t$ give
\begin{equation}\label{eq:nfree}
\E\big[\|n_t\|_2^2\mid\mathcal F_t\big]
\le\E\Big[\Big(\sum_iw_{t,i}\|n_{t,i}\|_2\Big)^2\Big|\,\mathcal F_t\Big]
\le\sum_iw_{t,i}\,\E\big[\|n_{t,i}\|_2^2\mid\mathcal F_t\big]\le\sigma^2 .
\end{equation}
This bound is free of $K$ and, notably, uses no independence across tasks: a
shared minibatch for all $K$ objectives is allowed. Throughout this appendix we
work under Assumption~\ref{as:gradbound}, so that
$\max_i\dn{\nabla f_i(x_t)}\le M$ for every $t$, and we write
$\zeta=\max_{x\ne0}\dn x/\|x\|_2$ as in Section~\ref{sec:setup}.

\subsection{The weight increment}

The tracking analysis needs to control how far the weights move in one step.
The pathwise bound (a) below is what the deterministic analysis would use; it
involves $\|g_t\|_\infty$, a maximum over tasks, which in the stochastic setting
contains $\max_i\dn{n_{t,i}}$ and would introduce a $\sqrt K$ through a maximal
inequality. Part (b) shows that \emph{in expectation} the increment is governed
by the $w_t$-\emph{weighted average} of the noise instead, with the maximum over
tasks confined to a term of second order in $\eta$.

\begin{lemma}[Weight increment; restatement of Lemma~\ref{lem:wstep}]\label{lem:wstep-app}
Let $w\in\simplex$, $g\in\R^K$, and $w'_i=w_ie^{\eta g_i}/\sum_jw_je^{\eta g_j}$.
\begin{enumerate}[leftmargin=1.8em]
\item[(a)] Pathwise, $\|w'-w\|_1\le2\eta\|g\|_\infty$.
\item[(b)] Suppose $w$, $x$ and $D$ with $\pn D\le R$ are $\mathcal F$-measurable,
$g_i=\ip{D}{\nabla f_i(x)}+\ip{D}{n_i}$ with $\max_i\dn{\nabla f_i(x)}\le M$,
$\E[n_i\mid\mathcal F]=0$ and $\E[\|n_i\|_2^2\mid\mathcal F]\le\sigma^2$ for
each $i$ (no independence across $i$ required). If $\eta RM\le\tfrac12$, then
\begin{equation}\label{eq:wstep-exp}
\E\big[\|w'-w\|_1\mid\mathcal F\big]\;\le\;\omega
\;:=\;16\,\eta R\,(M+\zeta\sigma)\;+\;8K\eta^2R^2\zeta^2\sigma^2 .
\end{equation}
\end{enumerate}
\end{lemma}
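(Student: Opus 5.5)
For (a), I will write $w'_i-w_i=w_i(e^{\eta g_i}/Z-1)$ with $Z=\sum_jw_je^{\eta g_j}$, and use the elementary bound $\|w'-w\|_1\le 2\,\mathrm{TV}$. Because $w'$ is $w$ reweighted by $e^{\eta g_i}/Z$, and $Z$ lies between $e^{\eta\min g}$ and $e^{\eta\max g}$, every ratio $w'_i/w_i$ lies in $[e^{-\eta(\max g-\min g)},e^{\eta(\max g-\min g)}]$. That alone would give a factor $e^{2\eta\|g\|_\infty}-1$, which is too weak. The cleaner route is Pinsker. The three-point identity of Lemma~\ref{lem:md} with $u=w$ gives $\KL(w\|w')+\KL(w'\|w)=\eta\ip{g}{w'-w}$. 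Combining this with Pinsker, $\tfrac12\|w'-w\|_1^2\le\KL(w'\|w)\le\eta\|g\|_\infty\|w'-w\|_1$, and dividing through yields $\|w'-w\|_1\le2\eta\|g\|_\infty$.

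For (b), I will split $g_i=b_i+e_i$, where $b_i=\ip D{\nabla f_i(x)}$ satisfies $|b_i|\le RM$ and $e_i=\ip D{n_i}$ satisfies $|e_i|\le R\zeta\|n_i\|_2$ by H\"older and the definition of $\zeta$. The goal is an expansion of $w'_i-w_i$ around $\eta=0$ that is linear in the centred scores plus a controlled remainder. Write $\phi_i=e^{\eta g_i}$, so that $w'_i-w_i=w_i(\phi_i-Z)/Z$. The idea is to separate tasks into small and large noisy scores through the threshold $\eta|e_i|\le\tfrac12$. On the event where the score is small, $|e^{s}-1|\le 2|s|$ for $|s|\le1$ (note $\eta|b_i|\le\tfrac12$ by the hypothesis $\eta RM\le\tfrac12$). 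This gives $|\phi_i-1|\le 2\eta(RM+|e_i|)$. For the normaliser I will use $Z\ge e^{\eta\ip wg}$ by Jensen, and $|Z-1|\le\sum_jw_j|\phi_j-1|$.

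The central term is then $\|w'-w\|_1\le\frac1Z\sum_iw_i(|\phi_i-1|+|Z-1|)\le\frac2Z\sum_iw_i|\phi_i-1|$. Its expectation involves only $\sum_iw_i\E|e_i|\le R\zeta\sigma$, the weighted average, with no maximum over tasks. This produces the $16\eta R(M+\zeta\sigma)$ term once constants like $e^{1}$ from lower-bounding $Z$ are absorbed.

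The main obstacle is the large-score event, where $e^{\eta g_i}$ is unbounded and $1/Z$ must be controlled. Here I will give up the linearisation and use the trivial bound $\|w'-w\|_1\le 2$. I will charge each large-noise task $i$ with $\Pr(\eta|e_i|>\tfrac12\mid\mathcal F)\le 4\eta^2R^2\zeta^2\sigma^2$ by Chebyshev, and sum over $i$ with a union bound. The factor $K$ enters only here, giving $8K\eta^2R^2\zeta^2\sigma^2$. On the complementary event all $|\eta g_j|\le1$, so $Z\ge e^{-1}$, which justifies the constant in the first term. Finally I will check that no independence across $i$ was used: only linearity of expectation and the union bound appear.
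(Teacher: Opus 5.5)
Your proposal is correct and follows the paper's proof. Part (a) is the same three-point identity at $u=w$ followed by Pinsker. Part (b) uses the same weighted-average bound $\|w'-w\|_1\le\frac{2}{Z}\sum_iw_i|e^{\eta g_i}-c|$, the same good event $\{\max_j\eta|\ip{D}{n_j}|\le\tfrac12\}$, and the same union-bound/Chebyshev charge $2\cdot4K\eta^2R^2\zeta^2\sigma^2$ off that event.

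The only difference is cosmetic. You take $c=1$ and bound $Z\ge e^{-1}$ on the good event, whereas the paper takes $c=e^{\bar b}$, so that Jensen gives $e^{\bar b}/Z\le1$ pathwise, and then linearizes in the centred scores with $|y_i|\le2$. Your choice in fact yields the slightly better constant $4e<16$.
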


\begin{proof}
(a) Take $u=w$ in \eqref{eq:md1}, which at $\tau=\kappa=0$ is the three-point
identity of exponentiated gradient (with $w_t=w$, $w_{t+1}=w'$, $g_t=g$). Since
$\KL(w\|w)=0$ it reads
$\eta\ip{g}{w-w'}=-\KL(w\|w')-\KL(w'\|w)$, i.e.
\[
\KL(w'\|w)+\KL(w\|w')=\eta\ip{g}{w'-w}\le\eta\|g\|_\infty\|w'-w\|_1 .
\]
Both divergences are non-negative, so
$\KL(w'\|w)\le\eta\|g\|_\infty\|w'-w\|_1$. Pinsker's inequality
$\tfrac12\|w'-w\|_1^2\le\KL(w'\|w)$ gives
$\tfrac12\|w'-w\|_1^2\le\eta\|g\|_\infty\|w'-w\|_1$; if $w'\ne w$ divide by
$\|w'-w\|_1$, and otherwise the claim is trivial.

(b) \emph{Step 1: a weighted-average bound.} Put $b_i:=\eta g_i$,
$a_i:=e^{b_i}$, $Z:=\sum_jw_ja_j$ and $\bar b:=\sum_jw_jb_j$. Then
$w'_i-w_i=w_i(a_i-Z)/Z$, so $\|w'-w\|_1=Z^{-1}\sum_iw_i|a_i-Z|$. For any
constant $c$, since $\sum_iw_i(a_i-c)=Z-c$,
\[
\sum_iw_i|a_i-Z|\le\sum_iw_i|a_i-c|+|c-Z|
=\sum_iw_i|a_i-c|+\Big|\sum_iw_i(c-a_i)\Big|\le2\sum_iw_i|a_i-c| .
\]
Take $c=e^{\bar b}$. By Jensen's inequality for $\exp$,
$Z=\sum_jw_je^{b_j}\ge e^{\bar b}$, hence $e^{\bar b}/Z\le1$ and
\begin{equation}\label{eq:wavg}
\|w'-w\|_1\;\le\;\frac{2e^{\bar b}}{Z}\sum_iw_i\big|e^{b_i-\bar b}-1\big|
\;\le\;2\sum_iw_i\,\big|e^{y_i}-1\big|,\qquad y_i:=b_i-\bar b .
\end{equation}

\emph{Step 2: splitting the scores.} Write $b_i=d_i+\nu_i$ with
$d_i:=\eta\ip{D}{\nabla f_i(x)}$ and $\nu_i:=\eta\ip{D}{n_i}$. By H\"older,
$|d_i|\le\eta RM$ and $|\nu_i|\le\eta R\dn{n_i}\le\eta R\zeta\|n_i\|_2$. Hence
\[
|y_i|\le|d_i-\bar d|+|\nu_i-\bar\nu|\le2\eta RM+|\nu_i|+\sum_jw_j|\nu_j| ,
\]
and consequently
$\sum_iw_i|y_i|\le2\eta RM+2\sum_iw_i|\nu_i|$.

\emph{Step 3: the good event.} Let
$\mathcal A:=\{\max_j|\nu_j|\le\tfrac12\}$. On $\mathcal A$, using
$2\eta RM\le1$, every $|y_i|\le1+\tfrac12+\tfrac12=2$. Since
$y\mapsto(e^{y}-1)/y$ is increasing on $(0,2]$ with value $(e^2-1)/2<4$ at
$y=2$, we have $|e^{y}-1|\le e^{|y|}-1\le4|y|$ for $|y|\le2$, and
\eqref{eq:wavg} gives, on $\mathcal A$,
\[
\|w'-w\|_1\le8\sum_iw_i|y_i|\le16\eta RM+16\sum_iw_i|\nu_i| .
\]
Off $\mathcal A$ we use the trivial bound $\|w'-w\|_1\le2$, together with the
union bound and Chebyshev's inequality:
\[
\Pr(\mathcal A^c\mid\mathcal F)\le\sum_j\Pr\big(|\nu_j|>\tfrac12\,\big|\,\mathcal F\big)
\le4\sum_j\E[\nu_j^2\mid\mathcal F]
\le4\eta^2R^2\zeta^2\sum_j\E[\|n_j\|_2^2\mid\mathcal F]\le4K\eta^2R^2\zeta^2\sigma^2 .
\]
\emph{Step 4.} Combining, and using
$\E[|\nu_i|\mid\mathcal F]\le\eta R\zeta\,\E[\|n_i\|_2\mid\mathcal F]\le\eta R\zeta\sigma$
(Jensen),
\[
\E\big[\|w'-w\|_1\mid\mathcal F\big]
\le16\eta RM+16\eta R\zeta\sigma\sum_iw_i+2\Pr(\mathcal A^c\mid\mathcal F)
\le16\eta R(M+\zeta\sigma)+8K\eta^2R^2\zeta^2\sigma^2 . \qedhere
\]
\end{proof}

The constant $16$ is not optimized. What matters is the structure of
\eqref{eq:wstep-exp}: the first-order term is a $w$-weighted average of the
per-task noise, not a maximum, and the number of objectives enters only through
the second-order term $8K\eta^2R^2\zeta^2\sigma^2$, which is what makes the
leading term of Theorem~\ref{thm:stoch} free of $K$ apart from $\log K$.

\subsection{Tracking error}

\begin{proof}[Proof of Lemma~\ref{lem:track}]
From line~7, for $t\ge1$,
\begin{align*}
\lambda_t&=(1-\alpha)\mathcal M_{t-1}+\alpha\widehat G_t-A_tw_t\\
&=(1-\alpha)\big(\mathcal M_{t-1}-A_{t-1}w_{t-1}\big)
+(1-\alpha)\big(A_{t-1}w_{t-1}-A_tw_t\big)+\alpha n_t\\
&=(1-\alpha)\lambda_{t-1}+(1-\alpha)\Theta_t+\alpha n_t ,
\end{align*}
and $\lambda_0=\widehat G_0-A_0w_0=n_0$ because $\alpha_0=1$. Unrolling,
\begin{equation}\label{eq:split}
\lambda_t=\underbrace{\alpha\sum_{s=1}^t(1-\alpha)^{t-s}n_s+(1-\alpha)^tn_0}_{=:\ \nu_t}
\;+\;\underbrace{\sum_{s=1}^t(1-\alpha)^{t-s+1}\Theta_s}_{=:\ \delta_t}.
\end{equation}

\emph{(i) Drift.} Split $\Theta_s$ according to whether $x$ or $w$ moved:
\[
\Theta_s=\big[A_{s-1}-A_s\big]w_{s-1}\;+\;A_s\big(w_{s-1}-w_s\big).
\]
For the first term, by convexity of $\dn\cdot$ and Assumption~\ref{as:smooth},
\[
\dn{\big[A_{s-1}-A_s\big]w_{s-1}}
\le\sum_iw_{s-1,i}\dn{\nabla f_i(x_{s-1})-\nabla f_i(x_s)}
\le L\pn{x_s-x_{s-1}}\le L\gamma R,
\]
the last step because $x_s-x_{s-1}=\gamma D_{s-1}$ with $\pn{D_{s-1}}\le R$;
this holds \emph{pathwise}, for every realization of the noise, and is free
of $K$. For the second term, by convexity and Assumption~\ref{as:gradbound},
\[
\dn{A_s(w_{s-1}-w_s)}\le\max_i\dn{\nabla f_i(x_s)}\,\|w_s-w_{s-1}\|_1
\le M\|w_s-w_{s-1}\|_1 .
\]
The step $w_{s-1}\to w_s$ is the update of Lemma~\ref{lem:wstep} with
$\mathcal F=\mathcal F_{s-1}$, $w=w_{s-1}$, $x=x_{s-1}$, $D=D_{s-1}$ and noise
$n_i=n_{s-1,i}$; the hypotheses of part (b) hold by Assumptions~\ref{as:noise}
and~\ref{as:gradbound} and $\eta RM\le\frac12$. Hence
$\E\|w_s-w_{s-1}\|_1\le\omega$, and
\[
\E\dn{\Theta_s}\le L\gamma R+M\omega=:B\qquad(s\ge1),
\]
while $\Theta_0=0$. Summing the geometric weights in \eqref{eq:split},
\[
\E\dn{\delta_t}\le B\sum_{s=1}^t(1-\alpha)^{t-s+1}\le B\,\frac{1-\alpha}{\alpha}\le\frac{B}{\alpha}.
\]

\emph{(ii) Noise.} For $s<s'$, $n_s$ is $\mathcal F_{s'}$-measurable and
$\E[n_{s'}\mid\mathcal F_{s'}]=0$, so $\E\ip{n_s}{n_{s'}}=0$: the summands of
$\nu_t$ are orthogonal in $L^2$. Therefore, by \eqref{eq:nfree},
\begin{align*}
\E\|\nu_t\|_2^2
&=\alpha^2\sum_{s=1}^t(1-\alpha)^{2(t-s)}\E\|n_s\|_2^2+(1-\alpha)^{2t}\E\|n_0\|_2^2\\
&\le\sigma^2\Big(\frac{\alpha^2}{1-(1-\alpha)^2}+(1-\alpha)^{2t}\Big)
\le\sigma^2\big(\alpha+(1-\alpha)^{2t}\big),
\end{align*}
using $\alpha^2/(1-(1-\alpha)^2)=\alpha/(2-\alpha)\le\alpha$ for
$\alpha\in(0,1]$. Converting with $\zeta$ and applying Jensen's inequality
($\E Y\le\sqrt{\E Y^2}$) and $\sqrt{a+b}\le\sqrt a+\sqrt b$,
\[
\E\dn{\nu_t}\le\zeta\sqrt{\E\|\nu_t\|_2^2}\le\zeta\sigma\big(\sqrt\alpha+(1-\alpha)^t\big).
\]
Combining (i) and (ii) with $\dn{\lambda_t}\le\dn{\nu_t}+\dn{\delta_t}$ gives
\begin{equation}\label{eq:lambda-bound}
\E\dn{\lambda_t}\le\zeta\sigma\big(\sqrt\alpha+(1-\alpha)^t\big)+\frac{L\gamma R+M\omega}{\alpha}
\qquad(t\ge0),
\end{equation}
which is the statement of Lemma~\ref{lem:track}. No maximal inequality over the
$K$ tasks has been used; $K$ enters only through the second-order term of
$\omega$.
\end{proof}

\subsection{One-step inequality and the rate}

\begin{lemma}[Stochastic one-step inequality]\label{lem:onestep-stoch}
Under Assumption~\ref{as:smooth}, for every $w\in\simplex$ and every $t\ge0$,
\begin{align*}
F(x_{t+1})w\le F(x_t)w&-\gamma R\dn{A_tw_t}
+\gamma\ip{g_t}{w-w_t}
+2\gamma R\big(\dn{\lambda_{t-1}}+\dn{\Theta_t}\big)\\
&-\gamma\Big\langle\textstyle\sum_i(w_i-w_{t,i})\,n_{t,i},\,D_t\Big\rangle
+\tfrac12L\gamma^2R^2 ,
\end{align*}
with the conventions $\lambda_{-1}=-A_0w_0$, $\Theta_0=0$. The noise term is
kept \emph{signed}: it is bounded in absolute value by
$\gamma R\dn{\sum_i(w_i-w_{t,i})n_{t,i}}$, but what we use is that its
conditional mean given $\mathcal F_t$ is zero when $w$ is deterministic.
\end{lemma}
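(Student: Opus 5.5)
The plan follows the proof of Lemma~\ref{lem:onestep}, now with $D_t=\lmo(\mathcal M_{t-1})$ in place of $\lmo(A_tw_t)$. Fix $w\in\simplex$. Multiply Lemma~\ref{lem:descent} (at $x=x_t$, $y=x_{t+1}=x_t+\gamma D_t$) by $w_i$ and sum. Since $\pn{D_t}\le R$, this gives
\[
F(x_{t+1})w\le F(x_t)w+\gamma\ip{A_tw}{D_t}+\tfrac12L\gamma^2R^2 .
\]
Everything then reduces to bounding the linear term $\ip{A_tw}{D_t}$. Decompose
\[
A_tw=A_tw_t+A_t(w-w_t),
\qquad
\ip{A_t(w-w_t)}{D_t}=\sum_i(w_i-w_{t,i})\ip{\nabla f_i(x_t)}{D_t}.
\]
Now $g_{t,i}=\ip{D_t}{\nabla f_i(x_t)}+\ip{D_t}{n_{t,i}}$ uses the stochastic gradients, so this second piece equals
\[
\ip{g_t}{w-w_t}-\Big\langle\textstyle\sum_i(w_i-w_{t,i})n_{t,i},D_t\Big\rangle .
\]
This produces exactly the score term and the signed noise term of the statement. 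No expectation is taken here; the zero-mean remark only records that $D_t$ and $w_t$ are $\mathcal F_t$-measurable while $\E[n_{t,i}\mid\mathcal F_t]=0$.

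The remaining piece $\ip{A_tw_t}{D_t}$ is handled by comparing $A_tw_t$ with the buffer at which the oracle was evaluated. By definition of $\lambda_{t-1}$ and $\Theta_t$,
\[
\mathcal M_{t-1}=\lambda_{t-1}+A_{t-1}w_{t-1}=\lambda_{t-1}+\Theta_t+A_tw_t .
\]
At $t=0$ the conventions $\lambda_{-1}=-A_0w_0$ and $\Theta_0=0$ give $\mathcal M_{-1}=0$, consistent with the definitions. Set $e_t:=\mathcal M_{t-1}-A_tw_t=\lambda_{t-1}+\Theta_t$. Using the oracle identity $\ip{\mathcal M_{t-1}}{D_t}=-R\dn{\mathcal M_{t-1}}$ and H\"older,
\[
\ip{A_tw_t}{D_t}=\ip{\mathcal M_{t-1}}{D_t}-\ip{e_t}{D_t}\le-R\dn{\mathcal M_{t-1}}+R\dn{e_t}.
\]
The reverse triangle inequality gives $\dn{\mathcal M_{t-1}}\ge\dn{A_tw_t}-\dn{e_t}$. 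Combining these,
\[
\ip{A_tw_t}{D_t}\le-R\dn{A_tw_t}+2R\dn{e_t}\le-R\dn{A_tw_t}+2R\big(\dn{\lambda_{t-1}}+\dn{\Theta_t}\big).
\]
This is the standard LMO-with-inexact-gradient estimate of \citet{pethick2025scion}, and it is where the factor $2$ comes from.

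If $\mathcal M_{t-1}=0$ (for instance at $t=0$), then $D_t=0$ and the left side is $0$. The bound still holds because $2R\dn{e_t}=2R\dn{A_tw_t}\ge R\dn{A_tw_t}$. More generally, the argument allows any selection in $B$, since only the identity \eqref{eq:lmo} and H\"older are used.

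Multiplying the three bounds by $\gamma$ and summing yields the stated inequality. I expect no real obstacle. The only points needing care are keeping the noise term signed rather than bounding it, and checking that the $t=0$ conventions make the decomposition of $\mathcal M_{t-1}$ an identity.
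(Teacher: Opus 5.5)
Your proposal is correct and follows the paper's proof essentially step for step. Both use the weighted descent lemma, the split $A_tw=A_tw_t+A_t(w-w_t)$ with the score/noise decomposition of $g_{t,i}$, and the inexact-input LMO estimate at $\mathcal M_{t-1}$, where $\mathcal M_{t-1}-A_tw_t=\lambda_{t-1}+\Theta_t$ yields the factor $2$. The only difference is cosmetic: you absorb $t=0$ into the general argument, since the oracle identity holds trivially at $\mathcal M_{-1}=0$ for any $D_0\in B$, whereas the paper handles $t=0$ separately by H\"older.
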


\begin{proof}
Weighting Lemma~\ref{lem:descent} by $w$ exactly as in the proof of
Lemma~\ref{lem:onestep} gives
$F(x_{t+1})w\le F(x_t)w+\gamma\ip{A_tw}{D_t}+\tfrac12L\gamma^2R^2$. Write
$A_tw=A_tw_t+A_t(w-w_t)$.

\emph{First part.} For $t\ge1$, $D_t\in\lmo(\mathcal M_{t-1})$, so by
\eqref{eq:lmo} and H\"older,
\[
\ip{A_tw_t}{D_t}=\ip{\mathcal M_{t-1}}{D_t}+\ip{A_tw_t-\mathcal M_{t-1}}{D_t}
\le-R\dn{\mathcal M_{t-1}}+R\dn{A_tw_t-\mathcal M_{t-1}} .
\]
By the triangle inequality
$\dn{\mathcal M_{t-1}}\ge\dn{A_tw_t}-\dn{A_tw_t-\mathcal M_{t-1}}$, hence
$\ip{A_tw_t}{D_t}\le-R\dn{A_tw_t}+2R\dn{A_tw_t-\mathcal M_{t-1}}$. Finally
$A_tw_t-\mathcal M_{t-1}=-(\mathcal M_{t-1}-A_{t-1}w_{t-1})-(A_{t-1}w_{t-1}-A_tw_t)
=-\lambda_{t-1}-\Theta_t$, and the triangle inequality gives the term
$2R(\dn{\lambda_{t-1}}+\dn{\Theta_t})$. For $t=0$, $\mathcal M_{-1}=0$ and
$D_0\in B$ is arbitrary, so H\"older gives
$\ip{A_0w_0}{D_0}\le R\dn{A_0w_0}=-R\dn{A_0w_0}+2R\dn{A_0w_0}
=-R\dn{A_0w_0}+2R(\dn{\lambda_{-1}}+\dn{\Theta_0})$ under the stated
conventions.

\emph{Second part.} By definition,
$g_{t,i}=\ip{D_t}{\nabla f_i(x_t;\xi_{t,i})}=\ip{D_t}{\nabla f_i(x_t)}+\ip{D_t}{n_{t,i}}$,
so
\[
\ip{A_t(w-w_t)}{D_t}=\sum_i(w_i-w_{t,i})\ip{\nabla f_i(x_t)}{D_t}
=\ip{g_t}{w-w_t}-\Big\langle\sum_i(w_i-w_{t,i})\,n_{t,i},\,D_t\Big\rangle .
\]
Multiplying both parts by $\gamma$ and collecting terms gives the claim.
\end{proof}

\begin{proof}[Proof of Theorem~\ref{thm:stoch}]
Throughout, $\alpha=T^{-1/2}$, $\gamma R=T^{-3/4}$ and $\eta R=T^{-3/4}/M^2$;
the hypothesis $T\ge(2/M)^{4/3}$ is exactly $\eta RM\le\frac12$, so
Lemma~\ref{lem:wstep-app}(b), and with it Lemma~\ref{lem:track}, apply.

\emph{Step 1: the master inequality.} Fix a deterministic $u\in\simplex$. Add
Lemma~\ref{lem:onestep-stoch} with $w=u$ to $\gamma/\eta$ times \eqref{eq:md}
(at $\tau=\kappa=0$) with the same $u$; Lemma~\ref{lem:md} is a deterministic
statement about line~6 of Algorithm~\ref{alg:stoch} and applies verbatim to
the noisy scores, since its proof uses no property of $g_t$ beyond finiteness.
Summing over $t=0,\dots,T-1$, telescoping the divergences, using
$\KL(u\|w_T)\ge0$ and $F(x_0)u-F(x_T)u\le\Delta$ (Assumption~\ref{as:lb}), and
dividing by $\gamma RT$,
\begin{align}\label{eq:master-stoch}
\frac1T\sum_{t=0}^{T-1}\dn{A_tw_t}
\le\ &\frac{\Delta}{\gamma RT}+\frac{\KL(u\|w_0)}{\eta RT}
+\frac{\eta}{2RT}\sum_{t}\|g_t\|_\infty^2
+\frac{L\gamma R}{2}\notag\\
&+\frac2T\sum_{t=0}^{T-1}\big(\dn{\lambda_{t-1}}+\dn{\Theta_t}\big)
-\frac{1}{RT}\sum_{t=0}^{T-1}\Big\langle\textstyle\sum_i(u_i-w_{t,i})\,n_{t,i},\,D_t\Big\rangle .
\end{align}
This holds pathwise. For the stationarity bound we take $u=u_K$, so that
$\KL(u\|w_0)=0$ and the second term vanishes; for the regret bound of
Corollary~\ref{cor:regret-stoch} a general $u$ is kept and
$\KL(u\|w_0)\le\ln K$. We now take expectations term by term.

\emph{Step 2: the noise cross term vanishes.} $u$ is deterministic and $w_t$,
$D_t$ are $\mathcal F_t$-measurable, while $\E[n_{t,i}\mid\mathcal F_t]=0$. By
the tower property,
\[
\E\Big\langle\sum_i(u_i-w_{t,i})n_{t,i},D_t\Big\rangle
=\E\Big[\Big\langle\sum_i(u_i-w_{t,i})\,\E[n_{t,i}\mid\mathcal F_t],\,D_t\Big\rangle\Big]=0 .
\]
This is exactly what evaluating the oracle at $\mathcal M_{t-1}$ rather than
$\mathcal M_t$ buys: with $D_t=\lmo(\mathcal M_t)$ the direction would depend on
$\xi_t$ and this term would not vanish.

\emph{Step 3: the mirror-descent error term.} By H\"older and
Assumption~\ref{as:gradbound},
$\|g_t\|_\infty\le R\max_i\dn{\nabla f_i(x_t;\xi_{t,i})}\le R\big(M+\zeta N_t\big)$
with $N_t:=\max_i\|n_{t,i}\|_2$, so
$\E\|g_t\|_\infty^2\le2R^2\big(M^2+\zeta^2\E N_t^2\big)\le2R^2\big(M^2+\zeta^2K\sigma^2\big)$,
using $\E N_t^2\le\sum_i\E\|n_{t,i}\|_2^2\le K\sigma^2$. Hence
\[
\frac{\eta}{2RT}\sum_t\E\|g_t\|_\infty^2\le\eta R\big(M^2+\zeta^2K\sigma^2\big)
=T^{-3/4}\Big(1+\frac{\zeta^2K\sigma^2}{M^2}\Big).
\]
This is one of the two places where $K$ appears polynomially; both are of lower
order than $T^{-1/4}$.

\emph{Step 4: drift and noise terms.} Let $B=L\gamma R+M\omega$ as in the
proof of Lemma~\ref{lem:track}. Using $\dn{\lambda_{-1}}=\dn{A_0w_0}\le M$,
$\Theta_0=0$, \eqref{eq:lambda-bound} for $t\ge1$ and $\E\dn{\Theta_t}\le B$
for $t\ge1$, together with $\sum_{t\ge0}(1-\alpha)^t\le1/\alpha$,
\[
\frac2T\sum_{t=0}^{T-1}\E\big(\dn{\lambda_{t-1}}+\dn{\Theta_t}\big)
\le\frac{2M}{T}+2\zeta\sigma\sqrt\alpha+\frac{2\zeta\sigma}{\alpha T}+\frac{2B}{\alpha}+2B .
\]
With the stated step sizes,
\begin{align*}
M\omega&=16\,T^{-3/4}\Big(1+\frac{\zeta\sigma}{M}\Big)+\frac{8K\zeta^2\sigma^2}{M^3}\,T^{-3/2},\\
B&=L\,T^{-3/4}+16\,T^{-3/4}\Big(1+\frac{\zeta\sigma}{M}\Big)+\frac{8K\zeta^2\sigma^2}{M^3}\,T^{-3/2},
\end{align*}
so that
$2B/\alpha=2\big(L+16+16\zeta\sigma/M\big)T^{-1/4}+16K\zeta^2\sigma^2M^{-3}T^{-1}$
and $2B\le2(L+16+16\zeta\sigma/M)T^{-3/4}+16K\zeta^2\sigma^2M^{-3}T^{-3/2}$.

\emph{Step 5: collecting.} Substituting into \eqref{eq:master-stoch} after
taking expectations,
\begin{align}
\frac1T\sum_{t=0}^{T-1}\E\dn{A_tw_t}
\le\ &\frac{\Delta+2L+32+2\zeta\sigma\,(1+16/M)}{T^{1/4}}
\;+\;\frac{2\zeta\sigma}{T^{1/2}}\notag\\
&+\frac{1+\tfrac52L+32(1+\zeta\sigma/M)+\zeta^2K\sigma^2/M^2}{T^{3/4}}
\;+\;\frac{2M+32K\zeta^2\sigma^2/M^3}{T}, \label{eq:stoch-explicit}
\end{align}
where in the last term we used $T^{-3/2}\le T^{-1}$. This is
\eqref{eq:stoch}; setting the right-hand side to $\epsilon$ yields
$T=O(\epsilon^{-4})$. The number of objectives does not enter the $T^{-1/4}$
term (the comparator is uniform); the two terms carrying $K$ polynomially are of order $T^{-3/4}$
(from the $\ell_\infty$ error term of mirror descent, Step 3) and $T^{-1}$
(from the second-order part of the weight increment, Lemma~\ref{lem:wstep-app}(b)).

\emph{Step 6: the CA distance.} The inequality
$\mathrm{CA}_t\le2\dn{A_tw_t}$ of Corollary~\ref{cor:ca} is pathwise and holds
for every measurable selection $w^\star(x_t)$, so averaging and taking
expectations gives $\frac1T\sum_t\E\,\mathrm{CA}_t\le2\times$\eqref{eq:stoch-explicit}.

Finally, note that with the deterministic choice $\eta R=\Theta(T^{-1/2}/M)$
the term $M\omega/\alpha$ would contain $16\eta RM^2/\alpha=\Theta(M)$, which
does not vanish as $T\to\infty$; this is why the weight step size must be
reduced to $\Theta(T^{-3/4}/M^2)$ in the stochastic setting.
\end{proof}

\begin{corollary}[Stochastic inner regret]\label{cor:regret-stoch}
Under the conditions of Theorem~\ref{thm:stoch}, for every $u\in\simplex$,
$\frac1T\sum_t\E\dn{\gF(x_t)w_t}\le\frac1T\sum_t\E\dn{\gF(x_t)u}$ plus the
right-hand side of \eqref{eq:stoch} with $\Delta$ replaced by $M^2\ln K$; hence
$\frac1T\sum_t\E\,\mathrm{gap}_t$ is bounded by the same quantity plus
$\bar V_T:=\min_{u\in\simplex}\E\frac1T\sum_t\big[\dn{\gF(x_t)u}-\min_w\dn{\gF(x_t)w}\big]$.
\end{corollary}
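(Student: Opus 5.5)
The plan is to reuse the pathwise master inequality \eqref{eq:master-stoch} from the proof of Theorem~\ref{thm:stoch} with a general deterministic comparator $u\in\simplex$ in place of $u_K$, but with the telescoped function-value difference handled differently. Adding Lemma~\ref{lem:onestep-stoch} at $w=u$ to $\gamma/\eta$ times \eqref{eq:md} at the same $u$ gives a sum containing $\gamma\ip{g_t}{u-w_t}$. For the regret statement I will not cancel this term against the mirror-descent bound. Instead I will use the subgradient inequality \eqref{eq:subgrad-step} with the \emph{true} scores. Because $D_t$ is an oracle output at $\mathcal M_{t-1}$ and not at $A_tw_t$, Lemma~\ref{lem:subgrad} does not apply verbatim. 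So the first concrete step is to redo the first part of Lemma~\ref{lem:onestep-stoch}: I bound $\ip{A_tw_t}{D_t}\le-R\dn{A_tw_t}+2R(\dn{\lambda_{t-1}}+\dn{\Theta_t})$, and separately write $\ip{A_tu}{D_t}\ge-R\dn{A_tu}$ by H\"older.

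Concretely, for each $t$ the two bounds combine into
\[
-R\dn{A_tu}\le\ip{A_tu}{D_t}=\ip{A_tw_t}{D_t}+\ip{A_t(u-w_t)}{D_t}.
\]
Using the bound on $\ip{A_tw_t}{D_t}$, this rearranges to
\[
R\dn{A_tw_t}-R\dn{A_tu}\le\ip{A_t(u-w_t)}{D_t}+2R(\dn{\lambda_{t-1}}+\dn{\Theta_t}).
\]
As in the second part of Lemma~\ref{lem:onestep-stoch}, $\ip{A_t(u-w_t)}{D_t}=\ip{g_t}{u-w_t}-\ip{\sum_i(u_i-w_{t,i})n_{t,i}}{D_t}$. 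Summing over $t$, I will bound $\sum_t\eta\ip{g_t}{u-w_t}$ by Lemma~\ref{lem:md} at $\tau=\kappa=0$, giving $\KL(u\|w_0)+\tfrac{\eta^2}{2}\sum_t\|g_t\|_\infty^2$ with $\KL(u\|w_0)\le\ln K$. Dividing by $RT$ yields a pathwise inequality identical to \eqref{eq:master-stoch}, except that the $\Delta/(\gamma RT)$ term is replaced by $\ln K/(\eta RT)$, the term $L\gamma R/2$ is absent (which only helps), and $\frac1T\sum_t\dn{A_tu}$ appears on the right.

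Taking expectations, Steps 2--4 of the proof of Theorem~\ref{thm:stoch} apply unchanged. The cross term vanishes because $u$ is deterministic and $w_t,D_t$ are $\mathcal F_t$-measurable. The $\|g_t\|_\infty^2$ term and the drift and noise terms are bounded exactly as before. The only new quantity is $\ln K/(\eta RT)=M^2\ln K\,T^{-1/4}$ under $\eta R=T^{-3/4}/M^2$. This matches the slot $\Delta/(\gamma RT)=\Delta T^{-1/4}$, which justifies the replacement $\Delta\to M^2\ln K$. The gap statement then follows as in Corollary~\ref{cor:gap}. I write $\mathrm{gap}_t=[\psi_t(w_t)-\psi_t(u)]+[\psi_t(u)-\min_w\psi_t(w)]$, average, take expectations, and minimize over deterministic $u$; the minimum exists by continuity and compactness. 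This gives $\bar V_T$.

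The main obstacle is the subgradient step. The paper's noiseless subgradient relation is stated for $D\in\lmo(A_tw_t)$, whereas here $D_t$ lags. I expect the H\"older-based rearrangement above to resolve this at the cost of exactly the same tracking terms $2R(\dn{\lambda_{t-1}}+\dn{\Theta_t})$ already present in \eqref{eq:master-stoch}. I will also check that the $t=0$ convention ($\lambda_{-1}=-A_0w_0$, $D_0$ arbitrary in $B$) still gives the same bound.
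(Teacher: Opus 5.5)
Your proposal is correct and follows the paper's proof essentially step for step: the H\"older lower bound $\dn{A_tu}\ge-\ip{A_tu}{D_t}/R$, the first part of Lemma~\ref{lem:onestep-stoch} for $w_t$, splitting the true scores into noisy scores minus the signed noise term, telescoping via \eqref{eq:md} with $\KL(u\|w_0)\le\ln K$, reusing Steps~2--5 of Theorem~\ref{thm:stoch}, and finally the Corollary~\ref{cor:gap}-style decomposition with a deterministic comparator, which produces $\bar V_T$. The one cosmetic point is that your opening plan of adding Lemma~\ref{lem:onestep-stoch} to \eqref{eq:md} (the function-value route) is never actually used by your argument and can be dropped.
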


Note $\bar V_T\ge\E V_T$: the comparator must be deterministic for the noise
cross term to vanish, so the stochastic statement is against the best fixed
weighting \emph{in expectation}, not against the realized hindsight
minimizer.

The linear-$K$ terms in $C_3,C_4$ are two maxima over tasks of noise
magnitudes bounded with second moments only, and are intrinsic there (noise of
size $\sqrt K\sigma$ with probability $1/K$ makes some task an outlier at every
step). A tail assumption removes them.

\begin{corollary}[Polylogarithmic $K$ under sub-Gaussian noise]\label{cor:subg}
Replace the variance bound in Assumption~\ref{as:noise} by the conditional tail
bound $\Pr\big(\|\nabla f_i(x_t;\xi_t)-\nabla f_i(x_t)\|_2>s\mid\mathcal F_t\big)\le2e^{-s^2/(2\sigma^2)}$
for all $s\ge0$ and $i$. Then Theorem~\ref{thm:stoch} holds for
$T^{3/2}\ge(8\zeta^2\sigma^2/M^4)\ln(4KT)$ with $\sigma\to2\sigma$ in $C_1,C_3$,
$C_2=4\zeta\sigma+2M$, $\zeta^2K\sigma^2/M^2\to2\zeta^2\sigma^2(1+\ln2K)/M^2$ in
$C_3$, and $C_4=4M$ (Appendix~\ref{app:stoch}): under these conditions every
displayed constant depends on $K$ only through $\ln K$, so $T=O(\epsilon^{-4})$
with a $K$-free constant for $K\le\exp(O(\epsilon^{-2}))$;
Corollary~\ref{cor:regret-stoch} holds with the same constants plus $M^2\ln K$.
For a.s.\ bounded noise the constants of Theorem~\ref{thm:stoch} are free of $K$.
\end{corollary}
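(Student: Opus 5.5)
Under the tail bound the proof of Theorem~\ref{thm:stoch} goes through line by line. The only changes are the two places where a maximum over tasks was bounded by second moments: Step~3 (the $\ell_\infty$ mirror-descent error term $\E\|g_t\|_\infty^2$) and Step~3 of Lemma~\ref{lem:wstep-app}(b) (the probability of the bad event $\mathcal A^c$). Everything else uses only the first two conditional moments of the $w_t$-weighted noise, and these are controlled by the tail bound. Integrating $2e^{-s^2/(2\sigma^2)}$ gives $\E[\|n_{t,i}\|_2^2\mid\mathcal F_t]\le4\sigma^2$, hence $\sigma\to2\sigma$ in \eqref{eq:nfree}, in Lemma~\ref{lem:track}, and therefore in $C_1,C_3$. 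The Jensen bound \eqref{eq:nfree} is unchanged, so the $K$-free leading term is preserved.

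\emph{Mirror-descent term.} I would replace $\E N_t^2\le K\sigma^2$ by the standard sub-Gaussian maximal inequality. With $N_t=\max_i\|n_{t,i}\|_2$, the union bound gives $\Pr(N_t>s)\le\min\{1,2Ke^{-s^2/(2\sigma^2)}\}$. Integrating $\E N_t^2=\int_0^\infty2s\Pr(N_t>s)\,ds$, split at $s_0^2=2\sigma^2\ln2K$, yields $\E N_t^2\le2\sigma^2(1+\ln2K)$. Substituting in Step~3 turns $\zeta^2K\sigma^2/M^2$ into $2\zeta^2\sigma^2(1+\ln2K)/M^2$, as claimed for $C_3$.

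\emph{Weight increment.} In Lemma~\ref{lem:wstep-app}(b) the Chebyshev step $\Pr(\mathcal A^c)\le4K\eta^2R^2\zeta^2\sigma^2$ is where $K$ enters linearly. Using $|\nu_j|\le\eta R\zeta\|n_{j}\|_2$ and the tail bound, $\Pr(\mathcal A^c\mid\mathcal F)\le2K\exp\!\big(-1/(8\eta^2R^2\zeta^2\sigma^2)\big)$. With $\eta R=T^{-3/4}/M^2$ the exponent is $T^{3/2}M^4/(8\zeta^2\sigma^2)$. The hypothesis $T^{3/2}\ge(8\zeta^2\sigma^2/M^4)\ln(4KT)$ is chosen exactly so that this probability is at most $1/(2T)$. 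The contribution of the bad event to $\omega$ is then at most $2\Pr(\mathcal A^c)\le1/T$. In the drift it appears as $M\cdot(1/T)/\alpha=MT^{-1/2}$ and, after the factor $2$, contributes $2M$ to $C_2$. This matches $C_2=4\zeta\sigma+2M$, and the previous $C_4$ terms collapse to $O(M)$ with no $K$, which gives the stated $C_4=4M$. The bookkeeping of where each $M/T$ piece lands ($C_2$ versus $C_4$) is the step I expect to be fiddliest. I would recompute Step~4's expression for $B$ with the new $\omega=16\eta R(M+2\zeta\sigma)+1/T$ and collect powers of $T$ afresh.

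\emph{Remaining claims.} For Corollary~\ref{cor:regret-stoch}, the only change is the nonzero $\KL(u\|w_0)\le\ln K$ term, which adds $\ln K/(\eta RT)=M^2\ln K\,T^{-1/4}$. For a.s.\ bounded noise, $\|n_{t,i}\|_2\le b$, two things happen. First, $N_t\le b$ deterministically, so Step~3 has no $K$. Second, once $\eta R\zeta b\le\frac12$ the bad event is empty, so $\omega$ has no second-order term and every constant is $K$-free. Finally, $T=O(\epsilon^{-4})$ with a $K$-free count holds as long as the $\ln K$ contributions to $C_3$ and the condition on $T$ are dominated by the $C_1$ requirement. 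This is the case for $\ln K=O(\epsilon^{-2})$.
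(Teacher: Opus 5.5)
Your proposal is correct and follows the paper's proof essentially step for step. You get $\sigma\to2\sigma$ by integrating the tail, and you replace $K\sigma^2$ in Step~3 by the union-bound maximal estimate $\E N_t^2\le2\sigma^2(1+\ln 2K)$. You use the sub-Gaussian union bound so that, under the stated hypothesis on $T$, the bad-event contribution to $\omega$ is at most $1/T$; this then lands as $2M$ in $C_2$, and as $2M$ in $C_4$ alongside the original $2M$. The only differences are cosmetic: you integrate in $s$ rather than $u=s^2$, and you phrase the hypothesis as $\Pr(\mathcal A^c)\le 1/(2T)$ rather than $4K\exp\big(-M^4T^{3/2}/(8\zeta^2\sigma^2)\big)\le 1/T$.
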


\begin{proof}[Proof of Corollary~\ref{cor:regret-stoch}]
Fix a deterministic $u\in\simplex$ and write $\psi_t(w)=\dn{A_tw}$ and
$\bar g_{t,i}:=\ip{D_t}{\nabla f_i(x_t)}$ for the \emph{noiseless} scores, so
that $g_{t,i}=\bar g_{t,i}+\ip{D_t}{n_{t,i}}$. Because $D_t=\lmo(\mathcal M_{t-1})$
is not an element of $\lmo(A_tw_t)$, Lemma~\ref{lem:subgrad} does not apply
directly; we argue from the definition of the dual norm instead. For any
$v\in\simplex$, $\pn{D_t}\le R$ gives $\psi_t(v)=\dn{A_tv}\ge-\ip{A_tv}{D_t}/R
=-\ip{\bar g_t}{v}/R$; at $v=u$ this is
$\psi_t(u)\ge-\ip{\bar g_t}{u}/R$. For $w_t$, the first part of the proof of
Lemma~\ref{lem:onestep-stoch} gives
$\ip{A_tw_t}{D_t}\le-R\dn{A_tw_t}+2R(\dn{\lambda_{t-1}}+\dn{\Theta_t})$, i.e.
$\psi_t(w_t)\le-\ip{\bar g_t}{w_t}/R+2(\dn{\lambda_{t-1}}+\dn{\Theta_t})$.
Subtracting,
\begin{align*}
\psi_t(w_t)-\psi_t(u)&\le\frac1R\ip{\bar g_t}{u-w_t}+2\big(\dn{\lambda_{t-1}}+\dn{\Theta_t}\big)\\
&=\frac1R\ip{g_t}{u-w_t}-\frac1R\Big\langle\sum_i(u_i-w_{t,i})n_{t,i},D_t\Big\rangle
+2\big(\dn{\lambda_{t-1}}+\dn{\Theta_t}\big).
\end{align*}
Sum over $t$, divide by $T$, bound $\frac1{RT}\sum_t\ip{g_t}{u-w_t}$ by
$\frac{\ln K}{\eta RT}+\frac{\eta}{2RT}\sum_t\|g_t\|_\infty^2$ via
\eqref{eq:md}, and take expectations. The signed noise term vanishes exactly as
in Step~2 of the proof of Theorem~\ref{thm:stoch}; the remaining terms are
precisely those of \eqref{eq:master-stoch} without $\Delta/(\gamma RT)$ and
$L\gamma R/2$ but with $\KL(u\|w_0)/(\eta RT)\le\ln K/(\eta RT)=M^2\ln K\,T^{-1/4}$
for the general comparator, and Steps~3--5 of that proof bound the rest by
\eqref{eq:stoch-explicit} with $\Delta$ removed and $2L$ (from $2B/\alpha$)
retained. For the gap statement, fix a deterministic $u\in\simplex$ and write,
as in the proof of Corollary~\ref{cor:gap},
$\E\,\mathrm{gap}_t=\E[\psi_t(w_t)-\psi_t(u)]+\E[\psi_t(u)-\min_w\psi_t(w)]$;
average over $t$, bound the first average by the regret bound just proved,
and then take the minimum over deterministic $u$ of the second average, which
is $\bar V_T$. The comparator must be deterministic because Step~2 of the proof
of Theorem~\ref{thm:stoch} uses $\E[n_{t,i}\mid\mathcal F_t]=0$ against a
$\mathcal F_t$-measurable $u-w_t$; the realized hindsight minimizer
$u(\omega)=\argmin_u\frac1T\sum_t\psi_t(u)$ depends on the whole trajectory and
is not admissible. Since $\E\min_u(\cdot)\le\min_u\E(\cdot)$, one has
$\E V_T\le\bar V_T$, and the bound with $\E V_T$ in place of $\bar V_T$ is
\emph{not} established by this argument.
\end{proof}

\begin{proof}[Proof of Corollary~\ref{cor:subg}]
Write $X_i:=\|n_{t,i}\|_2$, so $\Pr(X_i>s\mid\mathcal F_t)\le2e^{-s^2/(2\sigma^2)}$.
Then $\E[X_i^2\mid\mathcal F_t]=\int_0^\infty\Pr(X_i^2>u)\,du\le\int_0^\infty2e^{-u/(2\sigma^2)}du=4\sigma^2$,
so Assumption~\ref{as:noise} holds with $\sigma$ replaced by $2\sigma$, and
every step of the proofs of Lemma~\ref{lem:wstep-app}, Lemma~\ref{lem:track}
and Theorem~\ref{thm:stoch} goes through with that substitution except the two
that bound a maximum over tasks. Those improve as follows.

\emph{The $\ell_\infty$ error term (Step~3).} With $N_t=\max_iX_i$ and
$u_0:=2\sigma^2\ln(2K)$,
\[
\E[N_t^2\mid\mathcal F_t]=\int_0^\infty\Pr(N_t^2>u)\,du
\le u_0+\int_{u_0}^\infty2K e^{-u/(2\sigma^2)}du
=u_0+4K\sigma^2e^{-u_0/(2\sigma^2)}=2\sigma^2\big(1+\ln2K\big),
\]
by the union bound. Hence
$\E\|g_t\|_\infty^2\le2R^2\big(M^2+2\zeta^2\sigma^2(1+\ln2K)\big)$ and the
Step~3 term is $T^{-3/4}\big(1+2\zeta^2\sigma^2(1+\ln2K)/M^2\big)$.

\emph{The union bound in Lemma~\ref{lem:wstep-app}(b).} Since
$|\nu_j|\le\eta R\zeta X_j$,
\[
\Pr(\mathcal A^c\mid\mathcal F)\le\sum_j\Pr\Big(X_j>\frac{1}{2\eta R\zeta}\Big)
\le2K\exp\Big(-\frac{1}{8\eta^2R^2\zeta^2\sigma^2}\Big),
\]
so $\omega=16\eta R(M+2\zeta\sigma)+4K\exp\big(-1/(8\eta^2R^2\zeta^2\sigma^2)\big)$,
using $\E[X_i\mid\mathcal F]\le(\E[X_i^2\mid\mathcal F])^{1/2}\le2\sigma$ in
the first term. With $\eta R=T^{-3/4}/M^2$ the exponent is
$-M^4T^{3/2}/(8\zeta^2\sigma^2)$, and the hypothesis
$T^{3/2}\ge(8\zeta^2\sigma^2/M^4)\ln(4KT)$ is exactly
$4K\exp(-M^4T^{3/2}/(8\zeta^2\sigma^2))\le1/T$. Hence in the proof of
Theorem~\ref{thm:stoch}, $M\omega\le16T^{-3/4}(1+2\zeta\sigma/M)+M/T$, so
$2B/\alpha\le2(L+16+32\zeta\sigma/M)T^{-1/4}+2MT^{-1/2}$ and
$2B\le2(L+16+32\zeta\sigma/M)T^{-3/4}+2M/T$.

\emph{Collecting.} Repeating Step~5 with $\sigma\to2\sigma$ in the noise terms
($2\zeta\sigma\sqrt\alpha\to4\zeta\sigma T^{-1/4}$, $2\zeta\sigma/(\alpha T)\to4\zeta\sigma T^{-1/2}$)
and the two replacements above gives the constants in the statement:
$C_1=\Delta+2L+32+4\zeta\sigma(1+16/M)$, $C_2=4\zeta\sigma+2M$,
$C_3=1+\tfrac52L+32(1+2\zeta\sigma/M)+2\zeta^2\sigma^2(1+\ln2K)/M^2$,
$C_4=2M+2M=4M$. The requirement $C_3\le16C_1^3/\epsilon^2$ for a $K$-free
iteration count now reads $\ln(2K)\lesssim\epsilon^{-2}$. For almost surely
bounded noise $X_i\le\hat\sigma$ one has $N_t\le\hat\sigma$ and
$\Pr(\mathcal A^c)=0$ once $\eta R\zeta\hat\sigma\le\tfrac12$, so no maximum over
tasks is ever taken and the constants are those of Theorem~\ref{thm:stoch} with
$\sigma\to\hat\sigma$ and the $K$-terms deleted. The regret corollary adds
$\KL(u\|w_0)/(\eta RT)\le M^2\ln K\,T^{-1/4}$ as before.
\end{proof}

\begin{remark}[On the gradient-bound assumption]\label{rem:gradbound}
Assumption~\ref{as:gradbound} is used in three places: to bound the
$w$-component of the drift ($\dn{A_s(w_{s-1}-w_s)}\le M\|w_s-w_{s-1}\|_1$), in
Lemma~\ref{lem:wstep-app}(b), and in Step~3 above. In the deterministic setting the
analogous bound is \emph{derived} in Lemma~\ref{lem:M} by an induction on the
function values along the trajectory. That induction does not transfer to
Algorithm~\ref{alg:stoch}: the stochastic one-step inequality contains the
random terms $\dn{\lambda_{t-1}}$ and the noise cross term, which are
controlled in expectation but not almost surely, so it yields a bound on
$\E f_i(x_t)$ and not the almost-sure gradient bound that the drift argument
requires. Removing the assumption would need a stopping-time argument in the
manner of \citet{zhang2025mgda}, which we leave aside. Note that under an
almost-sure bound on the \emph{stochastic} gradients,
$\dn{\nabla f_i(x;\xi)}\le\widehat M$ for all $x,\xi$,
Assumption~\ref{as:gradbound} holds with $M=\widehat M$ directly, since
$\dn{\nabla f_i(x)}=\dn{\E_\xi\nabla f_i(x;\xi)}\le\E_\xi\dn{\nabla f_i(x;\xi)}$
by Jensen; in that case the assumption is not additional.
\end{remark}

\section{Per-task tracking variant}\label{app:pertask}

For completeness we record the alternative referred to in
Section~\ref{sec:stoch}; this is an upper-bound comparison of the two tracking
schemes, not a convergence theorem. Track $m_{t,i}=(1-\alpha)m_{t-1,i}+\alpha\nabla
f_i(x_t;\xi_{t,i})$ per task and set $\widehat G_t=\sum_iw_{t,i}m_{t,i}$,
$D_t=\lmo(\widehat G_t)$, $g_{t,i}=\ip{D_t}{m_{t,i}}$. Writing
$\lambda_{t,i}=m_{t,i}-\nabla f_i(x_t)$ and
$\Lambda_t=[\lambda_{t,1},\dots,\lambda_{t,K}]$, the recursion for each
$\lambda_{t,i}$ has drift $\nabla f_i(x_{t-1})-\nabla f_i(x_t)$ only, bounded by
$L\gamma R$ deterministically, so no $w$-drift term appears and
$\eta R=\Theta(T^{-1/2}/M)$ remains admissible, which gives the better constant
$M\sqrt{\log K}$ in place of $M^2\log K$. The one-step inequality however
involves $\dn{\Lambda_tw_t}=\dn{\sum_iw_{t,i}\lambda_{t,i}}$, and $w_t$ is
correlated with the past noise that produced $\lambda_{t,i}$: the weight update
raises $w_{t,i}$ precisely when past scores $\ip{D_s}{n_{s,i}}$ were positive,
and those same noise vectors appear in $\lambda_{t,i}$. The Jensen argument of
\eqref{eq:nfree} bounds $\E\|\sum_iw_{t,i}\lambda_{t,i}\|^2$ by
$\sum_i\E[w_{t,i}\|\lambda_{t,i}\|^2]$, which is not $\le\max_i\E\|\lambda_{t,i}\|^2$
when $w_t$ and $\lambda_{t,i}$ are dependent. One is left with the pathwise
bound $\dn{\Lambda_tw_t}\le\max_i\dn{\lambda_{t,i}}$ and a maximal inequality
\[
\E\max_i\|\nu_{t,i}\|_2\le\Big(\sum_i\E\|\nu_{t,i}\|_2^2\Big)^{1/2}
\le\sqrt K\max_i\big(\E\|\nu_{t,i}\|_2^2\big)^{1/2},
\]
so a factor $c_K=\sqrt K$ under Assumption~\ref{as:noise} alone (improving to
$O(\sqrt{\log K})$ if each $n_{t,i}$ is sub-Gaussian) multiplies $\zeta\sigma$
in the leading term. The two variants therefore trade a better $M$-dependence
against a worse $K$-dependence, and only the blended form has a leading term
free of $K$ under bounded variance alone. Whether $c_K$ can be removed for
per-task tracking is open.

\section{Iteration-wise inner gap: proofs for Section~\ref{sec:iterwise}}\label{app:reg}

Throughout, $\pn\cdot$ is an arbitrary norm, $A_t:=\gF(x_t)$, and
\[
f_t(w):=\dn{A_tw},\qquad r(w):=\KL(w\|u_K),\qquad
\tilde\psi_t:=f_t+\tau r,
\]
\[
p_t:=\argmin_{w\in\simplex^\kappa}\tilde\psi_t(w),\qquad \Phi_t:=\KL(p_t\|w_t),
\]
with $\simplex^\kappa=\{w\in\simplex:w_i\ge\kappa\}$, $\Lambda:=\ln(1/\kappa)$,
$\bar\eta:=\eta R$, $\bar\gamma:=\gamma R$, $\rho:=\bar\eta\tau$, and
$\mathrm{gap}_t:=f_t(w_t)-\min_{\simplex}f_t$. For $\tau>0$, $\tilde\psi_t$
is strictly convex (as $r$ is) and continuous on the compact set
$\simplex^\kappa$, so $p_t$ exists and is unique. We use three facts about
$f_t$, valid in every norm along a trajectory with
$\max_i\dn{\nabla f_i(x_t)}\le M$:
\begin{enumerate}[leftmargin=1.8em]
\item[(F1)] $f_t$ is convex and $M$-Lipschitz in $\|\cdot\|_1$ on $\simplex$:
$|f_t(v)-f_t(w)|\le\dn{A_t(v-w)}\le\sum_i|v_i-w_i|\dn{\nabla f_i(x_t)}\le M\|v-w\|_1$.
\item[(F2)] $\hat g_t:=-g_t/R\in\partial f_t(w_t)$ (Lemma~\ref{lem:subgrad}), and
$\|\hat g_t\|_\infty\le\max_i\dn{\nabla f_i(x_t)}\le M$ by H\"older.
\item[(F3)] For any $w\in\simplex$ and $x,x'$,
$|\dn{\gF(x)w}-\dn{\gF(x')w}|\le\dn{(\gF(x)-\gF(x'))w}\le\sum_iw_i\dn{\nabla f_i(x)-\nabla f_i(x')}\le L\pn{x-x'}$
(Assumption~\ref{as:smooth}).
\end{enumerate}

\begin{lemma}[Closed form of line~6]\label{lem:closedform}
The minimizer in line~6 of Algorithm~\ref{alg:main} is given by
\eqref{eq:regeg}: $w_{t+1,i}=\max\{\kappa,c_i/Z_t\}$ with
$c_i=w_{t,i}^{1/(1+\rho)}e^{\eta g_{t,i}/(1+\rho)}$ and $Z_t>0$ the unique
normalizer for which $\sum_iw_{t+1,i}=1$ (for $\kappa<1/K$; if $\kappa=1/K$
then $w_{t+1}=u_K$).
\end{lemma}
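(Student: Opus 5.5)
The plan is to solve the convex program in line~6 through its KKT conditions. The objective is
\[
h(w)=-\eta\ip{g_t}{w}+\rho\,\KL(w\|u_K)+\KL(w\|w_t).
\]
First expand the two divergences. Up to additive constants,
\[
h(w)=(1+\rho)\sum_i w_i\ln w_i-\sum_i w_i\big(\eta g_{t,i}+\ln w_{t,i}\big)+\text{const},
\]
where the $\rho\ln K$ term becomes a constant on $\simplex$. Dividing by $1+\rho>0$ does not change the minimizer. The problem is therefore equivalent to minimizing $\sum_iw_i\ln(w_i/c_i)$ over $\simplex^\kappa$, with $\ln c_i=(\eta g_{t,i}+\ln w_{t,i})/(1+\rho)$. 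This is exactly the $c_i$ of the statement.

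The resulting objective is strictly convex and continuous on the compact convex set $\simplex^\kappa$, so a unique minimizer exists. The constraints are affine, so KKT conditions are necessary and sufficient. I would write them with multiplier $\lambda$ for $\sum_iw_i=1$ and multipliers $\mu_i\ge0$ for $w_i\ge\kappa$. For $\kappa>0$ every point of $\simplex^\kappa$ is interior to the domain of $\ln$. For $\kappa=0$, the derivative $\ln(w_i/c_i)+1\to-\infty$ as $w_i\to0$, as in the proof of Lemma~\ref{lem:md}, so the minimizer has positive coordinates. In either case the stationarity condition is
\[
\ln(w_i/c_i)+1=\lambda+\mu_i .
\]
Complementary slackness then says: either $w_i>\kappa$ and $w_i=c_ie^{\lambda-1}$, or $w_i=\kappa$ and $c_ie^{\lambda-1}\le\kappa$. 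Setting $Z_t:=e^{1-\lambda}$, both cases combine to $w_{t+1,i}=\max\{\kappa,c_i/Z_t\}$.

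Conversely, any $Z>0$ with $\sum_i\max\{\kappa,c_i/Z\}=1$ yields a KKT point. To see this, take $\lambda=1-\ln Z$ and $\mu_i=\ln(\kappa Z/c_i)\ge0$ on the clipped coordinates. By sufficiency this point is the minimizer.

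It remains to show that such a $Z$ exists and is unique. Define $\phi(Z):=\sum_i\max\{\kappa,c_i/Z\}$ on $(0,\infty)$, where every $c_i>0$.
\begin{itemize}
\item $\phi$ is continuous and nonincreasing.
\item $\phi(Z)\to\infty$ as $Z\to0$.
\item $\phi(Z)\to K\kappa$ as $Z\to\infty$.
\end{itemize}
If $\kappa<1/K$, then $K\kappa<1$, so a solution of $\phi(Z)=1$ exists by the intermediate value theorem.

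Uniqueness is the only delicate point, because $\phi$ is constant wherever all coordinates are clipped. On the set where at least one coordinate is unclipped, however, $\phi$ is strictly decreasing. Any solution of $\phi(Z)=1$ must have an unclipped coordinate, since otherwise $\phi=K\kappa<1$. Hence the solution is unique.

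The case $\kappa=1/K$ is handled directly. There $\simplex^\kappa=\{u_K\}$, so $w_{t+1}=u_K$ trivially.
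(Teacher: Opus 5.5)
Your proof is correct and follows the paper's argument: KKT stationarity with complementary slackness gives $w_{t+1,i}=\max\{\kappa,c_i/Z_t\}$, and the map $Z\mapsto\sum_i\max\{\kappa,c_i/Z\}$ is nonincreasing, and strictly decreasing wherever some coordinate is unclipped (equivalently, wherever its value exceeds $K\kappa$), which makes the normalizer unique. Your extra steps are slightly more careful than the paper's sketch but follow the same route: rewriting the objective as the KL projection $\sum_i w_i\ln(w_i/c_i)$, showing explicitly that any root $Z$ gives a KKT point, and treating the $\kappa=0$ boundary and $\kappa=1/K$ cases separately.
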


\begin{proof}
The objective $h(w)=-\eta\ip{g_t}{w}+\rho r(w)+\KL(w\|w_t)$ is strictly convex
and differentiable on the relative interior, and the constraints are affine,
so the KKT conditions are necessary and sufficient. With multiplier $\nu$ for
$\sum_iw_i=1$ and $\lambda_i\ge0$ for $w_i\ge\kappa$, and using
$\partial_i r(w)=\ln w_i+\ln K+1$, $\partial_i\KL(w\|w_t)=\ln w_i-\ln w_{t,i}+1$,
stationarity reads
$-\eta g_{t,i}+(1+\rho)\ln w_i+\rho(\ln K+1)+1-\ln w_{t,i}-\nu-\lambda_i=0$,
i.e. $(1+\rho)\ln w_i=\ln w_{t,i}+\eta g_{t,i}+\text{const}+\lambda_i$. If
$\lambda_i=0$ this gives $w_i=c_i/Z_t$ for a constant $Z_t$; if $\lambda_i>0$
then $w_i=\kappa$, and $\lambda_i\ge0$ is equivalent to $c_i/Z_t\le\kappa$.
Hence $w_{t+1,i}=\max\{\kappa,c_i/Z_t\}$. The map $z\mapsto\sum_i\max\{\kappa,c_i/z\}$
is continuous, non-increasing, tends to $K\kappa<1$ as $z\to\infty$ and to
$\infty$ as $z\to0$, and is strictly decreasing wherever its value exceeds
$K\kappa$, so $Z_t$ is unique.
\end{proof}

\subsection{Computing the clipped update}\label{app:clip}

Equation~\eqref{eq:regeg} determines $w_{t+1}$ implicitly through the
normalizer $Z_t$, or equivalently through the set $S^\star:=\{i:c_i/Z_t<\kappa\}$
of clipped coordinates. Given $S^\star$, $Z_t$ is explicit: the unclipped
coordinates must carry the mass $1-|S^\star|\kappa$, so
$Z_t=\sum_{i\notin S^\star}c_i/(1-|S^\star|\kappa)$. The clipped set is found by
the following recurrence, which is the ``clip, renormalize, repeat'' rule
stated as a fixed-point iteration. Write $c=(c_i)_{i\in[K]}$ with
$c_i=w_{t,i}^{1/(1+\rho)}e^{\eta g_{t,i}/(1+\rho)}>0$ and, for $S\subseteq[K]$
with $|S|\kappa<1$,
\begin{equation}\label{eq:gS}
Z(S):=\frac{\sum_{i\notin S}c_i}{1-|S|\kappa}.
\end{equation}

\begin{algorithm}[H]
\caption{Clipped renormalization (KL projection of the softmax output onto $\simplex^\kappa$)}
\label{alg:clip}
\begin{algorithmic}[1]
\Require $c\in\R^K_{>0}$; $\kappa\in[0,1/K)$
\State $S_0\gets\emptyset$
\For{$k=0,1,2,\dots$}
  \State $Z_k\gets Z(S_k)$ \Comment{renormalize the unclipped coordinates, \eqref{eq:gS}}
  \State $A_k\gets\{i\notin S_k:\ c_i<\kappa Z_k\}$ \Comment{coordinates that fall below the floor}
  \If{$A_k=\emptyset$} \Return $w_i=\kappa$ for $i\in S_k$,\quad $w_i=c_i/Z_k$ for $i\notin S_k$ \EndIf
  \State $S_{k+1}\gets S_k\cup A_k$
\EndFor
\end{algorithmic}
\end{algorithm}

\begin{lemma}[Correctness and termination of Algorithm~\ref{alg:clip}]\label{lem:clip}
Let $\kappa\in[0,1/K)$ and $c\in\R^K_{>0}$, and let $Z^\star$ be the unique
normalizer of Lemma~\ref{lem:closedform}, with
$S^\star=\{i:c_i/Z^\star<\kappa\}$. Along Algorithm~\ref{alg:clip}:
\begin{enumerate}[leftmargin=1.8em]
\item[(i)] $S_0\subseteq S_1\subseteq\cdots\subseteq S^\star$, and $Z_0<Z_1<\cdots\le Z^\star$
while the recurrence continues;
\item[(ii)] the recurrence returns after at most $|S^\star|+1\le K$ rounds, in
the first round $k$ with $A_k=\emptyset$; at that round $S_k=S^\star$ and
$Z_k=Z^\star$, so the returned $w$ is $w_{t+1}$ of \eqref{eq:regeg}. (A
coordinate with $c_i=\kappa Z^\star$ exactly may sit in either set; the
returned vector is the same.)
\end{enumerate}
Each round costs $O(K)$, so the update costs $O(K\min\{K,|S^\star|+1\})$; sorting
$c$ once gives an $O(K\log K)$ implementation, since $S^\star$ is a lower set in
the order of $c$. At $\kappa=0$ the recurrence stops after one round with
$S_0=\emptyset$ and $Z_0=\sum_ic_i$, i.e. the plain softmax.
\end{lemma}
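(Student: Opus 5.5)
The plan is to work entirely with the scalar map $\phi(z):=\sum_i\max\{\kappa,c_i/z\}$ on $z>0$, which Lemma~\ref{lem:closedform} shows is continuous and non-increasing, equals $1$ exactly at $z=Z^\star$, and is strictly decreasing wherever it exceeds $K\kappa$. Fix $S$ with $|S|\kappa<1$. The first step is a comparison: for every $z$,
\[
\phi(z)\;\ge\;|S|\kappa+\sum_{i\notin S}c_i/z,
\qquad\text{and}\qquad
|S|\kappa+\sum_{i\notin S}c_i/Z(S)=1 .
\]
The inequality holds because each term satisfies $\max\{\kappa,c_i/z\}\ge\kappa$ on $S$ and $\max\{\kappa,c_i/z\}\ge c_i/z$ off $S$; the identity is just the definition \eqref{eq:gS}. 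Hence $\phi(Z(S))\ge1$. Since $\phi$ is non-increasing and $\phi(Z^\star)=1$, this forces $Z(S)\le Z^\star$. If $\phi(Z(S))=1$, strict monotonicity near value $1>K\kappa$ gives $Z(S)=Z^\star$. This single inequality $Z(S)\le Z^\star$ is the engine for everything else.

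For (i) I will induct on $k$. Suppose $S_k\subseteq S^\star$. Then $Z_k\le Z^\star$ by the step above, so any $i\in A_k$ satisfies $c_i<\kappa Z_k\le\kappa Z^\star$, i.e. $i\in S^\star$, and therefore $S_{k+1}\subseteq S^\star$. The well-definedness condition $|S_{k+1}|\kappa<1$ follows from $|S^\star|\kappa<1$, which in turn holds because $\phi(Z^\star)=1$ and unclipped coordinates contribute positively. For strict growth of $Z_k$ while $A_k\ne\emptyset$, I write
\[
Z(S_{k+1})\big(1-|S_{k+1}|\kappa\big)=Z_k\big(1-|S_k|\kappa\big)-\sum_{i\in A_k}c_i .
\]
Each removed $c_i$ is strictly less than $\kappa Z_k$, so the right side exceeds $Z_k(1-|S_{k+1}|\kappa)$, giving $Z_{k+1}>Z_k$.

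For (ii), each non-terminal round adds at least one new element of $S^\star$, so termination occurs within $|S^\star|+1\le K$ rounds. At termination $A_k=\emptyset$, which means every $i\notin S_k$ has $c_i/Z_k\ge\kappa$. Therefore the returned vector equals $\max\{\kappa,c_i/Z_k\}$ coordinatewise on $[K]\setminus S_k$, while on $S_k\subseteq S^\star$ it is $\kappa$. The remaining point to check is $c_i/Z_k\le\kappa$ on $S_k$, and this holds because $c_i<\kappa Z_j\le\kappa Z_k$ at the round $j$ where $i$ was added. Thus the returned vector is $\max\{\kappa,c/Z_k\}$ and sums to $1$ by construction. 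So $\phi(Z_k)=1$, giving $Z_k=Z^\star$ and $w=w_{t+1}$. The equality $S_k=S^\star$ then holds up to ties $c_i=\kappa Z^\star$, which produce the same value $\kappa$ in either set.

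The cost claims follow from these facts. Each round is $O(K)$. Since $S^\star=\{i:c_i<\kappa Z^\star\}$ is a lower set in the order of $c$, one can sort once and scan prefixes. With prefix sums, each candidate $Z$ is then $O(1)$, for a total of $O(K\log K)$. At $\kappa=0$ the set $A_0$ is empty, so the algorithm returns the plain softmax. The only delicate step is the comparison $\phi(Z(S))\ge1$ together with uniqueness of the root near value $1$; everything else is bookkeeping.
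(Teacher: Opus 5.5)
Your proof is correct, and its skeleton matches the paper's. You show $S_k\subseteq S^\star$ by induction from $Z(S)\le Z^\star$, and you get $Z_{k+1}>Z_k$ from the same identity $Z_{k+1}(1-|S_{k+1}|\kappa)=Z_k(1-|S_k|\kappa)-\sum_{i\in A_k}c_i$. At termination you identify the output as $\max\{\kappa,c/Z_k\}$ summing to $1$, so uniqueness of the normalizer gives $Z_k=Z^\star$.

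The difference is how you obtain the key bound. The paper proves $Z(S)\le Z^\star$ only for $S\subseteq S^\star$, by algebra on the water-filling identities \eqref{eq:Zstar}. In the same step it gets equality iff $S=S^\star$, which is what converts $Z_k=Z^\star$ into $S_k=S^\star$. You prove the bound for \emph{every} $S$ from the termwise bound $\phi(z)\ge|S|\kappa+\sum_{i\notin S}c_i/z$, i.e.\ $Z^\star=\max_S Z(S)$. That generality buys a cleaner sorted implementation. Let $P_s$ be the set of the $s$ smallest $c_i$; then $S^\star$ is some $P_s$, so $Z^\star=\max_s Z(P_s)$, computable with running sums in $O(K)$ after sorting. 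This supplies the stopping rule and correctness argument that your ``scan prefixes'' sentence leaves out. Ties at $c_i=\kappa Z^\star$ are harmless there, because the output uses only the value $Z^\star$.

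Two tightenings. First, evaluate your termwise bound at $z=Z^\star$ instead of $z=Z(S)$: this gives $Z^\star(1-|S|\kappa)\ge\sum_{i\notin S}c_i=Z(S)(1-|S|\kappa)$ directly. As written, ``$\phi$ non-increasing and $\phi(Z^\star)=1$ forces $Z(S)\le Z^\star$'' silently needs the strict decrease of $\phi$ above $K\kappa$, which you invoke only in the next sentence. Second, the hedge ``up to ties'' is unnecessary. $S_k\subseteq S^\star$ by (i), and any $i\in S^\star\setminus S_k$ would satisfy $c_i<\kappa Z^\star=\kappa Z_k$ and so belong to $A_k=\emptyset$. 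Hence $S_k=S^\star$ exactly, as the statement asserts.
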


\begin{proof}
Two facts about $Z(\cdot)$ are used. First, by the definition of $S^\star$ and
$\sum_i\max\{\kappa,c_i/Z^\star\}=1$,
\begin{equation}\label{eq:Zstar}
\sum_{i\notin S^\star}c_i=Z^\star\big(1-|S^\star|\kappa\big),\qquad
c_i<\kappa Z^\star\ \ (i\in S^\star),\qquad
c_i\ge\kappa Z^\star\ \ (i\notin S^\star).
\end{equation}
Second, for $S\subseteq S^\star$,
\[
\sum_{i\notin S}c_i=\sum_{i\notin S^\star}c_i+\sum_{i\in S^\star\setminus S}c_i
\le Z^\star(1-|S^\star|\kappa)+\kappa Z^\star\,(|S^\star|-|S|)=Z^\star(1-|S|\kappa),
\]
with strict inequality if $S\ne S^\star$; dividing by $1-|S|\kappa>0$ gives
\begin{equation}\label{eq:ZS}
Z(S)\le Z^\star\quad\text{for all }S\subseteq S^\star,\qquad\text{with equality iff }S=S^\star .
\end{equation}

\emph{(i) Monotonicity.} We show by induction that $S_k\subseteq S^\star$. This
holds for $S_0=\emptyset$. If $S_k\subseteq S^\star$ then $Z_k=Z(S_k)\le Z^\star$
by \eqref{eq:ZS}, so every $i\in A_k$ satisfies $c_i<\kappa Z_k\le\kappa Z^\star$,
i.e. $i\in S^\star$; hence $S_{k+1}\subseteq S^\star$. The sets are nested by
construction. For the normalizers, if $A_k\ne\emptyset$ then, using
$c_i<\kappa Z_k$ for $i\in A_k$,
\[
\sum_{i\notin S_{k+1}}c_i=Z_k(1-|S_k|\kappa)-\sum_{i\in A_k}c_i
>Z_k(1-|S_k|\kappa)-|A_k|\kappa Z_k=Z_k\big(1-|S_{k+1}|\kappa\big),
\]
and $1-|S_{k+1}|\kappa\ge1-|S^\star|\kappa>0$, so $Z_{k+1}>Z_k$.

\emph{(ii) Termination and correctness.} Since the $S_k$ are nested subsets of
$S^\star$ and strictly grow while $A_k\ne\emptyset$, the recurrence stops after
at most $|S^\star|+1$ rounds. At termination, $A_k=\emptyset$ means
$c_i\ge\kappa Z_k$ for all $i\notin S_k$, while every $i\in S_k$ was added at
some round $j<k$ with $c_i<\kappa Z_j\le\kappa Z_k$ by (i). Hence the returned
vector satisfies $w_i=\max\{\kappa,c_i/Z_k\}$ for every $i$, and it sums to $1$
by the definition of $Z(S_k)$. It is therefore the water-filling vector of
Lemma~\ref{lem:closedform} with normalizer $Z_k$; by the uniqueness of the
normalizer proved there, $Z_k=Z^\star$, and consequently $S_k=S^\star$ and
$w=w_{t+1}$.

For the complexity claim, note that $S^\star$ consists of the coordinates with
the smallest $c_i$: if $c_i\le c_j$ and $j\in S^\star$ then
$c_i/Z^\star\le c_j/Z^\star<\kappa$. After sorting, one can scan the candidates
$S=\{$the $s$ smallest$\}$ for $s=0,1,\dots$ and stop at the first $s$ for which
the $(s+1)$-st smallest $c$ is at least $\kappa Z(S)$; by \eqref{eq:ZS} and the
argument above this is $S^\star$.
\end{proof}

At $\tau=\kappa=0$ the update is a single softmax, and for $\kappa>0$ the
recurrence typically stops after one or two rounds, so the per-iteration cost of
line~6 is $O(K)$ in practice and never more than $O(K^2)$; it is negligible
against the $K$ backward passes.

\begin{remark}[Why the unsquared objective]\label{rem:unsquared}
In Euclidean geometry, the squared inner objective
$\tfrac12\|\gF(x_t)w\|_2^2$ is smooth. The analyses of
\citet{zhang2025mgda} exploit this structure for iteration-wise CA distance.
Our oracle instead supplies a subgradient of the unsquared norm, matching the
linear descent term in Lemma~\ref{lem:onestep}. In Euclidean geometry, the
squared-objective gradient is $\|G_t\|_2$ times the unsquared gradient when
$G_t\ne0$; the latter involves normalization by $\|G_t\|_2$ and is
ill-conditioned near a zero blend.

For general dual norms, nonsmoothness can also occur at nonzero blends: the
$\ell_1$ norm is nonsmooth at vectors with zero coordinates, and the nuclear
norm at rank-deficient matrices. Squaring need not remove this nonsmoothness.
Our analysis therefore uses only convexity, Lipschitz continuity, and the
oracle subgradient identity. It incurs the slack $2\bar\eta^2M^2$ per step in
Lemma~\ref{lem:contract-slack}, a H\"older-$1/2$ sensitivity bound, and the
$1/\kappa$ factor in Lemma~\ref{lem:sens}. These bounds lead to the stated
burn-in and total-horizon exponents; sharper analysis under additional norm
smoothness assumptions remains open.
\end{remark}

\subsection{Contraction with slack}

\begin{lemma}[Contraction toward the regularized target]\label{lem:contract-slack}
Suppose $\max_i\dn{\nabla f_i(x_t)}\le M$. For every $u\in\simplex^\kappa$,
\[
(1+\rho)\KL(u\|w_{t+1})\;\le\;\KL(u\|w_t)+\bar\eta\big[\tilde\psi_t(u)-\tilde\psi_t(w_{t+1})\big]+2\bar\eta^2M^2 .
\]
In particular, since $p_t$ minimizes $\tilde\psi_t$ over $\simplex^\kappa\ni w_{t+1}$,
\[
\KL(p_t\|w_{t+1})\;\le\;\frac{\Phi_t+2\bar\eta^2M^2}{1+\rho}.
\]
\end{lemma}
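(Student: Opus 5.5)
The plan is to start from inequality \eqref{eq:md1} in the proof of Lemma~\ref{lem:md}, the exact form before the $\ell_\infty$ error term is added. Rewrite it with the subgradient $\hat g_t=-g_t/R\in\partial f_t(w_t)$ from (F2), so that $\eta g_t=-\bar\eta\hat g_t$. For $u\in\simplex^\kappa$ this reads
\[
(1+\rho)\KL(u\|w_{t+1})\le\KL(u\|w_t)-\KL(w_{t+1}\|w_t)+\bar\eta\ip{\hat g_t}{w_{t+1}-u}+\rho\big(r(u)-r(w_{t+1})\big).
\]
Since $\rho=\bar\eta\tau$, the last term is already $\bar\eta\tau[r(u)-r(w_{t+1})]$. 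So it remains to turn $\bar\eta\ip{\hat g_t}{w_{t+1}-u}$ into $\bar\eta[f_t(u)-f_t(w_{t+1})]$ plus slack, with the slack absorbed by $-\KL(w_{t+1}\|w_t)$.

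Split the inner product as $\ip{\hat g_t}{w_t-u}+\ip{\hat g_t}{w_{t+1}-w_t}$.

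\textbf{First piece.} Convexity gives $\ip{\hat g_t}{w_t-u}\le f_t(w_t)-f_t(u)$. Then (F1) gives $f_t(w_t)\le f_t(w_{t+1})+M\|w_{t+1}-w_t\|_1$, so
\[
\ip{\hat g_t}{w_t-u}\le f_t(w_{t+1})-f_t(u)+M\|w_{t+1}-w_t\|_1 .
\]

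\textbf{Second piece.} By H\"older and (F2), $\ip{\hat g_t}{w_{t+1}-w_t}\le M\|w_{t+1}-w_t\|_1$.

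There is a sign issue here. The first piece produces $f_t(w_{t+1})-f_t(u)$, whereas the target has $f_t(u)-f_t(w_{t+1})$ inside $\bar\eta[\tilde\psi_t(u)-\tilde\psi_t(w_{t+1})]$. So I would recheck the orientation of \eqref{eq:md1} before going further. Descent on $\tilde\psi_t$ uses $\eta g_t=-\bar\eta\hat g_t$, and the mirror-descent bound then places $\bar\eta\ip{\hat g_t}{w_t-u}$ with a favorable sign: it contributes $\bar\eta[f_t(u)-f_t(w_t)]$ when moved appropriately. In the final written version I would redo the three-point inequality directly for the objective $\bar\eta\ip{\hat g_t}{w}+\rho r(w)+\KL(w\|w_t)$. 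That gives
\[
\bar\eta\ip{\hat g_t}{w_{t+1}-u}+\bar\eta\tau\big(r(w_{t+1})-r(u)\big)\le\KL(u\|w_t)-(1+\rho)\KL(u\|w_{t+1})-\KL(w_{t+1}\|w_t),
\]
and then apply the two-piece bound with the correct sign. In both pieces the error is at most $2\bar\eta M\|w_{t+1}-w_t\|_1$.

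\textbf{Absorbing the error.} Young's and Pinsker's inequalities give
\[
2\bar\eta M\|w_{t+1}-w_t\|_1\le 2\bar\eta^2M^2+\tfrac12\|w_{t+1}-w_t\|_1^2\le2\bar\eta^2M^2+\KL(w_{t+1}\|w_t).
\]
This cancels the negative divergence and yields the constant $2\bar\eta^2M^2$ in the statement. The main obstacle is getting this sign bookkeeping right: the linearization is taken at $w_t$ but the value must be compared at $w_{t+1}$, and it is the Lipschitz transfer (F1) that costs the second $\bar\eta M\|\cdot\|_1$.

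\textbf{The ``in particular'' claim.} Set $u=p_t$, which lies in $\simplex^\kappa$. Since $w_{t+1}\in\simplex^\kappa$ and $p_t$ minimizes $\tilde\psi_t$ there, the bracket $\tilde\psi_t(p_t)-\tilde\psi_t(w_{t+1})$ is $\le0$. Dropping it and dividing by $1+\rho$ gives the claim.
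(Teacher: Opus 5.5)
Once the signs are fixed, your argument is exactly the paper's proof. The paper rewrites \eqref{eq:md1} with $\hat g_t=-g_t/R$ and splits $u-w_{t+1}=(u-w_t)+(w_t-w_{t+1})$. It bounds the first piece by the subgradient inequality plus (F1) and the second by H\"older, absorbs $2\bar\eta M\|w_{t+1}-w_t\|_1$ into $-\KL(w_{t+1}\|w_t)$ via $2ab\le2a^2+\tfrac12b^2$ and Pinsker, and takes $u=p_t$ for the second claim.

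Two sign slips in your writeup need fixing. First, \eqref{eq:md1} never needed re-deriving: rearranging it directly gives $+\bar\eta\ip{\hat g_t}{u-w_{t+1}}$, and your first display dropped a sign when moving terms across. Second, your \emph{First piece} states the subgradient inequality backwards. It actually gives $\ip{\hat g_t}{u-w_t}\le f_t(u)-f_t(w_t)$, equivalently $\ip{\hat g_t}{w_t-u}\ge f_t(w_t)-f_t(u)$, which is exactly the direction your corrected display requires.
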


\begin{proof}
Start from \eqref{eq:md1} in the proof of Lemma~\ref{lem:md}, written with
$\hat g_t=-g_t/R$, so that $\eta\ip{g_t}{u-w_{t+1}}=-\bar\eta\ip{\hat g_t}{u-w_{t+1}}$:
\[
(1+\rho)\KL(u\|w_{t+1})\le\KL(u\|w_t)-\KL(w_{t+1}\|w_t)
+\bar\eta\ip{\hat g_t}{u-w_{t+1}}+\rho\big(r(u)-r(w_{t+1})\big).
\]
For the inner product, split $u-w_{t+1}=(u-w_t)+(w_t-w_{t+1})$ and use (F2):
the subgradient inequality gives $\ip{\hat g_t}{u-w_t}\le f_t(u)-f_t(w_t)$, and
H\"older gives $\ip{\hat g_t}{w_t-w_{t+1}}\le M\|w_{t+1}-w_t\|_1$. Then, by (F1),
$f_t(u)-f_t(w_t)=f_t(u)-f_t(w_{t+1})+\big(f_t(w_{t+1})-f_t(w_t)\big)\le f_t(u)-f_t(w_{t+1})+M\|w_{t+1}-w_t\|_1$.
Hence
\begin{align*}
\bar\eta\ip{\hat g_t}{u-w_{t+1}}&\le\bar\eta\big(f_t(u)-f_t(w_{t+1})\big)+2\bar\eta M\|w_{t+1}-w_t\|_1\\
&\le\bar\eta\big(f_t(u)-f_t(w_{t+1})\big)+2\bar\eta^2M^2+\tfrac12\|w_{t+1}-w_t\|_1^2 ,
\end{align*}
by Young's inequality $2ab\le2a^2+\tfrac12b^2$ with $a=\bar\eta M$,
$b=\|w_{t+1}-w_t\|_1$. Pinsker's inequality bounds the last term by
$\KL(w_{t+1}\|w_t)$, which cancels the $-\KL(w_{t+1}\|w_t)$ above.
Substituting and using $\rho=\bar\eta\tau$,
\[
(1+\rho)\KL(u\|w_{t+1})\le\KL(u\|w_t)+\bar\eta\big[f_t(u)+\tau r(u)-f_t(w_{t+1})-\tau r(w_{t+1})\big]+2\bar\eta^2M^2 ,
\]
which is the claim. The second statement follows from
$\tilde\psi_t(p_t)\le\tilde\psi_t(w_{t+1})$, as $w_{t+1}\in\simplex^\kappa$ by
construction.
\end{proof}

The additive slack $2\bar\eta^2M^2$ arises from our subgradient analysis of
the potentially nonsmooth objective $f_t$. This argument does not establish
contraction without slack. Euclidean squared-objective analyses exploit
additional smoothness; such smoothness is not available for every dual norm.

\subsection{The moving target}

\begin{lemma}[Sensitivity of the regularized optimum]\label{lem:sens}
Let $p(x):=\argmin_{\simplex^\kappa}\{\dn{\gF(x)w}+\tau r(w)\}$. For any $x,x'$,
\[
\|p(x)-p(x')\|_1\;\le\;\sqrt{\frac{2L\pn{x-x'}}{\tau}} ,
\]
and hence, along the trajectory,
\[
\chi^2(p_t\|p_{t+1}):=\sum_i\frac{(p_{t,i}-p_{t+1,i})^2}{p_{t+1,i}}\;\le\;\frac{2L\bar\gamma}{\tau\kappa}.
\]
\end{lemma}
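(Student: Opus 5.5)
The plan is to derive the sensitivity bound from strong convexity of the entropy regularizer in $\|\cdot\|_1$, combined with the Lipschitz dependence of the objective on $x$ from (F3). Write $\tilde\psi_x(w):=\dn{\gF(x)w}+\tau r(w)$ and set $p=p(x)$, $p'=p(x')$. By Pinsker, $r=\KL(\cdot\|u_K)$ is $1$-strongly convex in $\|\cdot\|_1$ on $\simplex$, since it differs from the negative entropy by a constant. Hence $\tilde\psi_x$ is $\tau$-strongly convex in $\|\cdot\|_1$, and first-order optimality of $p$ over the convex set $\simplex^\kappa$ gives the quadratic growth
\[
\tilde\psi_x(w)-\tilde\psi_x(p)\ge\tfrac{\tau}{2}\|w-p\|_1^2\qquad(w\in\simplex^\kappa).
\]
I would justify this through the Bregman form: $\tilde\psi_x(w)\ge\tilde\psi_x(p)+\ip{s}{w-p}+\tau\KL(w\|p)$ for a subgradient $s$ that certifies optimality, followed by Pinsker. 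Since $p$ has coordinates $\ge\kappa>0$, the gradient of $r$ exists at $p$, which avoids boundary issues. The same inequality holds at $p'$ for $\tilde\psi_{x'}$.

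Next, apply the growth inequality at $w=p'$ for $x$ and at $w=p$ for $x'$, and add the two. The regularizer terms cancel, which gives
\[
\tau\|p-p'\|_1^2\le\big[\dn{\gF(x)p'}-\dn{\gF(x')p'}\big]+\big[\dn{\gF(x')p}-\dn{\gF(x)p}\big]\le2L\pn{x-x'},
\]
with each bracket bounded by (F3). Taking square roots yields the first claim. A sharper route would use one growth inequality and a Lipschitz-in-$w$ argument, but the symmetric sum gives exactly the constant stated, so I will use it.

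For the $\chi^2$ bound, use $p_{t+1}\in\simplex^\kappa$, so each denominator is $\ge\kappa$. Then
\[
\chi^2(p_t\|p_{t+1})\le\kappa^{-1}\sum_i(p_{t,i}-p_{t+1,i})^2\le\kappa^{-1}\|p_t-p_{t+1}\|_1^2,
\]
where the last step uses $\|v\|_2^2\le\|v\|_1^2$. Along the trajectory, $\pn{x_{t+1}-x_t}=\gamma\pn{D_t}\le\bar\gamma$, and the first claim gives $\|p_t-p_{t+1}\|_1^2\le2L\bar\gamma/\tau$. Combining the two bounds gives $2L\bar\gamma/(\tau\kappa)$.

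The main obstacle is the strong-convexity step: stating the quadratic growth correctly for the nonsmooth sum over the constrained set $\simplex^\kappa$. I would handle it via the subdifferential sum rule, which applies because $r$ is finite and differentiable on a neighborhood of $\simplex^\kappa$ within the simplex, and the variational inequality for a subgradient $s\in\partial\dn{\gF(x)\cdot}(p)+\tau\nabla r(p)$ satisfying $\ip{s}{w-p}\ge0$. The remainder follows directly.
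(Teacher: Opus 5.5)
Your proposal is correct and follows the paper's proof essentially step for step. Like the paper, you get quadratic growth at each minimizer from the Bregman form of $r$ and Pinsker's inequality, add the two growth inequalities so the regularizer terms cancel, bound each bracket by (F3), and then obtain the $\chi^2$ bound from $p_{t+1,i}\ge\kappa$ and $\|v\|_2\le\|v\|_1$.
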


\begin{proof}
Write $p=p(x)$, $p'=p(x')$, $\tilde\psi_x=f_x+\tau r$ with $f_x=\dn{\gF(x)\cdot}$.
Since $f_x$ is convex and finite, the first-order optimality condition for the
convex problem $\min_{\simplex^\kappa}\tilde\psi_x$ provides
$s\in\partial f_x(p)$ with $\ip{s+\tau\nabla r(p)}{p'-p}\ge0$. Convexity of
$f_x$ and the Bregman identity $r(p')=r(p)+\ip{\nabla r(p)}{p'-p}+\KL(p'\|p)$
give
\[
\tilde\psi_x(p')-\tilde\psi_x(p)\ge\ip{s+\tau\nabla r(p)}{p'-p}+\tau\KL(p'\|p)\ge\tau\KL(p'\|p)\ge\tfrac\tau2\|p'-p\|_1^2 ,
\]
the last step by Pinsker. Symmetrically
$\tilde\psi_{x'}(p)-\tilde\psi_{x'}(p')\ge\tfrac\tau2\|p-p'\|_1^2$. Adding and
cancelling the identical $\tau r$ terms,
\[
\tau\|p-p'\|_1^2\le\big[f_x(p')-f_{x'}(p')\big]-\big[f_x(p)-f_{x'}(p)\big]\le2\sup_{w\in\simplex}|f_x(w)-f_{x'}(w)|\le2L\pn{x-x'}
\]
by (F3), which gives the first claim. For the second, $\pn{x_{t+1}-x_t}\le\bar\gamma$,
and since $p_{t+1,i}\ge\kappa$,
$\chi^2(p_t\|p_{t+1})\le\kappa^{-1}\sum_i(p_{t,i}-p_{t+1,i})^2\le\kappa^{-1}\|p_t-p_{t+1}\|_1^2$.
\end{proof}

This H\"older-$\tfrac12$ bound uses only strong convexity and the uniform
closeness (F3) of $f_x$ and $f_{x'}$, so it holds in every norm. A route to
stronger sensitivity estimates would require additional regularity. Even in
Euclidean geometry, differentiating the unsquared objective introduces
normalization by the blend norm near stationarity; general norms may also
have nonzero nonsmooth points.

\begin{lemma}[KL perturbation of the first argument]\label{lem:klpert}
Let $p,p',q\in\simplex^\kappa$ and $\Lambda:=\ln(1/\kappa)$. Then
\[
\KL(p'\|q)\le\KL(p\|q)+\sqrt{(2+\Lambda)\KL(p'\|q)}\cdot\sqrt{\chi^2(p\|p')} .
\]
\end{lemma}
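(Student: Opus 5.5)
The plan is to write the difference of the two divergences exactly, keep only its linear part, apply Cauchy--Schwarz in the $\chi^2$ geometry, and then bound a second moment of log-ratios by $\KL(p'\|q)$ using the floor $\kappa$. Write $\ell_i:=\ln(p'_i/q_i)$. A direct expansion gives
\[
\KL(p'\|q)-\KL(p\|q)=\sum_i(p'_i-p_i)\,\ell_i-\KL(p\|p') .
\]
Dropping $-\KL(p\|p')\le0$ leaves $\sum_i(p'_i-p_i)\ell_i$. Write each summand as $\big[(p'_i-p_i)/\sqrt{p'_i}\big]\cdot\big[\sqrt{p'_i}\,\ell_i\big]$; all coordinates are at least $\kappa$, so they are positive and this is legitimate. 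Cauchy--Schwarz then gives
\[
\sum_i(p'_i-p_i)\ell_i\le\sqrt{\chi^2(p\|p')}\,\Big(\sum_ip'_i\ell_i^2\Big)^{1/2}.
\]
It therefore suffices to prove the second-moment bound $\sum_ip'_i\ell_i^2\le(2+\Lambda)\KL(p'\|q)$.

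For that bound I would use the exact representation
\[
\KL(p'\|q)=\sum_ip'_i\,\phi(\ell_i),\qquad \phi(\ell):=e^{-\ell}-1+\ell\ge0 .
\]
It holds because $\sum_ip'_ie^{-\ell_i}=\sum_iq_i=1=\sum_ip'_i$. The claim then reduces to the scalar inequality $\ell^2\le(2+\Lambda)\phi(\ell)$ on the range of the $\ell_i$. The floor fixes that range: $p'_i,q_i\in[\kappa,1]$, so $\ell_i\in[-\Lambda,\Lambda]$ with $\Lambda=\ln(1/\kappa)$.

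The scalar inequality splits by the sign of $\ell$.
\begin{itemize}
\item For $\ell\le0$, Taylor's theorem with $e^{s}\ge1+s+s^2/2$ for $s=-\ell\ge0$ gives $\phi(\ell)\ge\ell^2/2$. Hence $\ell^2\le2\phi(\ell)\le(2+\Lambda)\phi(\ell)$.
\item For $0\le\ell\le\Lambda$, I would prove $\phi(\ell)\ge\ell^2/(2+\ell)$, which gives $\ell^2\le(2+\ell)\phi(\ell)\le(2+\Lambda)\phi(\ell)$. Setting $h(\ell):=(2+\ell)\phi(\ell)-\ell^2$, we have $h(0)=0$. It should suffice to differentiate: $h'(\ell)=e^{-\ell}(-1-\ell)+1+\ell-1+\ldots$; after simplifying, $h'(0)=0$, and $h''\ge0$ (or, failing that, a further derivative) should be checkable from $e^{-\ell}\ge1-\ell$.
\end{itemize}

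The main obstacle is this elementary inequality on the positive side. It is also exactly where the factor $2+\Lambda$ enters: without the upper bound $\ell\le\Lambda$ coming from $p'_i\le1$ and $q_i\ge\kappa$, the ratio $\ell^2/\phi(\ell)$ grows linearly in $\ell$ and no uniform constant exists. Everything else, namely the expansion identity, Cauchy--Schwarz, and the representation of $\KL(p'\|q)$ through $\phi$, is mechanical.
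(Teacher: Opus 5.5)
Your proposal is correct and follows the paper's proof essentially step for step. Dropping $-\KL(p\|p')$ is the paper's convexity step for $\KL(\cdot\|q)$ at $p'$, the Cauchy--Schwarz splitting is identical, and your representation $\sum_ip'_i\phi(\ell_i)$ agrees term by term with the paper's $\sum_iq_i\,d(r_i)$, so both reduce to the same scalar inequality $\ell^2\le(2+\max(\ell,0))\,\phi(\ell)$. On the positive side, $h(\ell)=(2+\ell)e^{-\ell}+\ell-2$ has $h'(\ell)=1-(1+\ell)e^{-\ell}\ge0$ by $e^{\ell}\ge1+\ell$ (your hint $e^{-\ell}\ge1-\ell$ points the wrong way, though $h''(\ell)=\ell e^{-\ell}\ge0$ also works), so no further derivative is needed.
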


\begin{proof}
Fix $q$ and let $F(\cdot):=\KL(\cdot\|q)$, which is convex with
$\nabla F(p')_i=\ln(p_i'/q_i)+1$. Convexity at $p'$ gives
$F(p')-F(p)\le\ip{\nabla F(p')}{p'-p}=\sum_i\ln(p_i'/q_i)(p_i'-p_i)$, the
constant dropping because $\sum_i(p_i'-p_i)=0$. Cauchy--Schwarz with the
splitting $\sqrt{p_i'}\cdot1/\sqrt{p_i'}$ yields
\[
\sum_i\ln\frac{p_i'}{q_i}(p_i'-p_i)
\le\Big(\sum_ip_i'\ln^2\frac{p_i'}{q_i}\Big)^{1/2}
\Big(\sum_i\frac{(p_i'-p_i)^2}{p_i'}\Big)^{1/2}
=\sqrt{V(p'\|q)}\sqrt{\chi^2(p\|p')} .
\]
It remains to show $V(p'\|q)\le(2+\Lambda)\KL(p'\|q)$. Put $r_i:=p_i'/q_i$;
since $p',q\in\simplex^\kappa$ we have $r_i\in[\kappa,1/\kappa]$, so
$|\ln r_i|\le\Lambda$. Writing $\psi(r):=r\ln^2r$ and
$d(r):=r\ln r-r+1\ge0$, we have $V(p'\|q)=\sum_iq_i\psi(r_i)$ and, because
$\sum_iq_i(r_i-1)=0$, also $\KL(p'\|q)=\sum_iq_id(r_i)$. So it suffices that
$\psi(r)\le(2+\Lambda)d(r)$ whenever $|\ln r|\le\Lambda$. Substituting
$r=e^s$, this is
\begin{equation}\label{eq:scalar}
e^ss^2\le(2+\Lambda)\,(se^s-e^s+1)\qquad\text{for }|s|\le\Lambda ,
\end{equation}
where $se^s-e^s+1\ge0$ with equality only at $s=0$ (both sides of
\eqref{eq:scalar} vanish there). We prove the stronger statement
$e^ss^2\le(2+\max(s,0))(se^s-e^s+1)$ for all real $s$, which implies
\eqref{eq:scalar} since $\max(s,0)\le\Lambda$ on $[-\Lambda,\Lambda]$.

\emph{Case $s\ge0$.} Let $h(s):=(2+s)(se^s-e^s+1)-e^ss^2$. Dividing by $e^s>0$,
$e^{-s}h(s)=(2+s)(s-1+e^{-s})-s^2=s-2+(2+s)e^{-s}=:k(s)$.
Then $k(0)=0$ and $k'(s)=1-(1+s)e^{-s}\ge0$ because $1+s\le e^s$. Hence
$k\ge0$ on $[0,\infty)$, i.e. $h\ge0$.

\emph{Case $s\le0$.} Put $u:=-s\ge0$. The claim $e^ss^2\le2(se^s-e^s+1)$ reads
$e^{-u}u^2\le2\big(1-(1+u)e^{-u}\big)$, i.e. $u^2+2u+2\le2e^u$, which holds
because $2e^u=2+2u+u^2+\sum_{j\ge3}2u^j/j!\ge u^2+2u+2$.

Hence $e^ss^2\le(2+\max(s,0))(se^s-e^s+1)$ for all $s$, which gives
\eqref{eq:scalar} and therefore $V(p'\|q)\le(2+\Lambda)\KL(p'\|q)$.
\end{proof}

\subsection{The recursion, the bias, and the proof of Theorem~\ref{thm:iterwise}}

\begin{lemma}[Tracking recursion]\label{lem:recursion}
Assume $\rho=\bar\eta\tau\le1$ and $\max_i\dn{\nabla f_i(x_t)}\le M$. Then
\[
\Phi_{t+1}\;\le\;\Big(1-\frac{\rho}{4}\Big)\Phi_t+4\bar\eta^2M^2+\frac{2(2+\Lambda)}{\rho}\,\chi^2(p_t\|p_{t+1}).
\]
\end{lemma}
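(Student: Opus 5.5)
The plan is to combine three ingredients: the one-step contraction of Lemma~\ref{lem:contract-slack}, the change of target from $p_t$ to $p_{t+1}$ via Lemma~\ref{lem:klpert}, and a Young inequality that absorbs the cross term into part of the contraction.

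\emph{Step 1 (contraction toward the old target).} Lemma~\ref{lem:contract-slack} applied at $u=p_t\in\simplex^\kappa$ gives
\[
a:=\KL(p_t\|w_{t+1})\le\frac{\Phi_t+2\bar\eta^2M^2}{1+\rho}.
\]

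\emph{Step 2 (change of target).} Apply Lemma~\ref{lem:klpert} with $p=p_t$, $p'=p_{t+1}$ and $q=w_{t+1}$. All three lie in $\simplex^\kappa$, since $w_{t+1}\in\simplex^\kappa$ by construction. Writing $b:=\Phi_{t+1}=\KL(p_{t+1}\|w_{t+1})$ and $c:=\chi^2(p_t\|p_{t+1})$, the lemma gives
\[
b\le a+\sqrt{(2+\Lambda)\,b\,c}.
\]
Here $\Lambda=\ln(1/\kappa)$, consistent with the notation of the statement.

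\emph{Step 3 (absorbing the cross term).} Use Young's inequality with a parameter $\theta$ to be chosen:
\[
\sqrt{(2+\Lambda)bc}\le\frac{\theta}{2}\,b+\frac{(2+\Lambda)c}{2\theta}.
\]
This yields $(1-\theta/2)\,b\le a+(2+\Lambda)c/(2\theta)$. Take $\theta=\rho/4$, so that $1-\theta/2=1-\rho/8\ge 7/8$ because $\rho\le1$. Dividing through gives
\[
b\le\frac{1}{1-\rho/8}\cdot\frac{\Phi_t+2\bar\eta^2M^2}{1+\rho}+\frac{2(2+\Lambda)c}{\rho(1-\rho/8)}.
\]

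\emph{Step 4 (constants).} Three elementary bounds on $\rho\in(0,1]$ close the argument.
\begin{itemize}
\item The contraction factor: $\frac{1}{(1+\rho)(1-\rho/8)}\le 1-\frac{\rho}{4}$. This is equivalent to $(1-\rho/4)(1+\rho)(1-\rho/8)\ge1$. Expanding gives $1+\tfrac58\rho-\tfrac{11}{32}\rho^2+\tfrac1{32}\rho^3$, which is $\ge1$ for $\rho\le1$.
\item The slack term: the same factor multiplying $2\bar\eta^2M^2$ is at most $1$, hence at most $2$. This gives the stated $4\bar\eta^2M^2$ with room to spare.
\item The drift term: $\frac{1}{1-\rho/8}\le\frac87$, so the coefficient of $c$ is at most $\frac{16(2+\Lambda)}{7\rho}$, which exceeds the target $\frac{2(2+\Lambda)}{\rho}$.
\end{itemize}

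The main obstacle is the drift coefficient in the last bullet. The constant $2$ in front of $(2+\Lambda)/\rho$ is tight for this parameterization, so I would retune $\theta$. The choice $\theta=\rho/2$ gives the drift coefficient $(2+\Lambda)/(\rho(1-\rho/4))\le\frac43(2+\Lambda)/\rho$, which is fine. The contraction factor then becomes $\frac{1}{(1+\rho)(1-\rho/4)}$. I would need this to be $\le1-\rho/4$, i.e. $(1-\rho/4)^2(1+\rho)\ge1$. Expanding gives $1+\tfrac12\rho-\tfrac{7}{16}\rho^2+\tfrac1{16}\rho^3$, which is $\ge1$ on $(0,1]$ because $\tfrac12-\tfrac7{16}\rho+\tfrac1{16}\rho^2>0$ there. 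The slack coefficient is then $2/((1+\rho)(1-\rho/4))\le 2$, so all three constants of Lemma~\ref{lem:recursion} follow. This inequality check is the only real computation; I would commit to $\theta=\rho/2$.
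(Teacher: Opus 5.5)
Your committed argument with $\theta=\rho/2$ is essentially the paper's proof: Lemma~\ref{lem:klpert} at $(p_t,p_{t+1},w_{t+1})$, then Young's inequality absorbing $\tfrac{\rho}{4}\Phi_{t+1}$, then Lemma~\ref{lem:contract-slack} for $\KL(p_t\|w_{t+1})$, and finally the bounds $[(1+\rho)(1-\rho/4)]^{-1}\le 1-\rho/4$ and $(1-\rho/4)^{-1}\le 2$; the only difference is that the paper checks the first bound via $(1-\rho/4)(1+\rho)\ge 1+\rho/2$ rather than by expanding the cubic. The $\theta=\rho/4$ attempt is unnecessary and can be dropped.
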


\begin{proof}
Apply Lemma~\ref{lem:klpert} with $p=p_t$, $p'=p_{t+1}$, $q=w_{t+1}$ (all in
$\simplex^\kappa$: $w_{t+1}$ by construction, $p_t,p_{t+1}$ by definition):
$\Phi_{t+1}\le\KL(p_t\|w_{t+1})+\sqrt{(2+\Lambda)\Phi_{t+1}}\sqrt{\chi^2}$.
Young's inequality $ab\le\tfrac\theta2a^2+\tfrac1{2\theta}b^2$ with
$a=\sqrt{\Phi_{t+1}}$, $b=\sqrt{(2+\Lambda)\chi^2}$, $\theta=\rho/2$ bounds the
last term by $\tfrac\rho4\Phi_{t+1}+(2+\Lambda)\chi^2/\rho$. Using
Lemma~\ref{lem:contract-slack} for $\KL(p_t\|w_{t+1})$ and rearranging,
\[
\Phi_{t+1}\le\frac{1}{1-\rho/4}\Big[\frac{\Phi_t+2\bar\eta^2M^2}{1+\rho}+\frac{(2+\Lambda)\chi^2}{\rho}\Big].
\]
For $\rho\in(0,1]$, $(1-\rho/4)(1+\rho)=1+\tfrac34\rho-\tfrac14\rho^2\ge1+\tfrac12\rho$,
so $[(1-\rho/4)(1+\rho)]^{-1}\le(1+\rho/2)^{-1}\le1-\rho/4$, and
$(1-\rho/4)^{-1}\le2$. Substituting gives the claim.
\end{proof}

\begin{lemma}[Bias of the regularized target]\label{lem:bias}
Suppose $\max_i\dn{\nabla f_i(x_t)}\le M$ and $\kappa\le1/K$. Then
$f_t(p_t)-\min_{\simplex}f_t\le2MK\kappa+\tau\ln K$.
\end{lemma}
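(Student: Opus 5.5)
The plan is to compare $p_t$ against a cheaply constructed competitor in $\simplex^\kappa$, using only the optimality of $p_t$ for $\tilde\psi_t$. Let $w^\star\in\argmin_{\simplex}f_t$ and set $\tilde w:=(1-K\kappa)w^\star+\kappa\mathbf 1$. Since $\kappa\le1/K$, this is a convex combination of $w^\star$ and $u_K$ with weight $K\kappa$ on $u_K$. Hence $\tilde w\in\simplex$, and every coordinate satisfies $\tilde w_i\ge\kappa$, so $\tilde w\in\simplex^\kappa$. (At $\kappa=1/K$ we simply get $\tilde w=u_K$.)

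The argument then runs in three steps.

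\textbf{Step 1 (optimality).} Because $p_t$ minimizes $\tilde\psi_t=f_t+\tau r$ over $\simplex^\kappa$, we have $f_t(p_t)+\tau r(p_t)\le f_t(\tilde w)+\tau r(\tilde w)$. Using $r(p_t)\ge0$ and $r\le\ln K$ on $\simplex$ (both already noted after Lemma~\ref{lem:md}), this gives $f_t(p_t)\le f_t(\tilde w)+\tau\ln K$.

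\textbf{Step 2 (cost of the floor).} By (F1), $f_t$ is $M$-Lipschitz in $\|\cdot\|_1$. Moreover $\tilde w-w^\star=K\kappa(u_K-w^\star)$, so $\|\tilde w-w^\star\|_1=K\kappa\|u_K-w^\star\|_1\le2K\kappa$. Therefore $f_t(\tilde w)\le f_t(w^\star)+2MK\kappa=\min_{\simplex}f_t+2MK\kappa$.

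\textbf{Step 3 (combine).} Putting the two steps together yields $f_t(p_t)-\min_{\simplex}f_t\le2MK\kappa+\tau\ln K$, which is the claim.

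The only delicate point is making sure the mixed point actually lies in the floored simplex, including the boundary case $\kappa=1/K$. Beyond that, no step is a real obstacle, since no norm-specific property is needed. One could sharpen $r(\tilde w)\le\ln K$ by using convexity of $r$, namely $r(\tilde w)\le(1-K\kappa)r(w^\star)$, but this is not required for the stated bound.
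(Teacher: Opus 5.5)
Your proposal is correct and follows the paper's proof essentially verbatim. The paper uses the same mixed competitor $(1-K\kappa)w^\star_t+K\kappa u_K\in\simplex^\kappa$, optimality of $p_t$ together with $r\ge0$, and (F1) with $\|\hat w-w^\star_t\|_1\le2K\kappa$. The only cosmetic difference is that the paper bounds $r(\hat w)$ via convexity before invoking $\ln K$, whereas you apply $r\le\ln K$ directly.
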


\begin{proof}
Let $w^\star_t\in\argmin_{\simplex}f_t$ and $\hat w:=(1-K\kappa)w^\star_t+K\kappa u_K$.
Then $\hat w_i\ge\kappa$ and $\hat w\in\simplex$, so $\hat w\in\simplex^\kappa$;
$\|\hat w-w^\star_t\|_1=K\kappa\|u_K-w^\star_t\|_1\le2K\kappa$; and
$r(\hat w)\le(1-K\kappa)r(w^\star_t)\le\ln K$ by convexity of $r$, $r(u_K)=0$
and $r\le\ln K$. Optimality of $p_t$ and $r\ge0$ give
$f_t(p_t)\le\tilde\psi_t(p_t)\le\tilde\psi_t(\hat w)\le f_t(\hat w)+\tau\ln K
\le f_t(w^\star_t)+M\|\hat w-w^\star_t\|_1+\tau\ln K$, by (F1).
\end{proof}

\begin{proof}[Proof of Theorem~\ref{thm:iterwise}]
We first dispose of the trivial regime: if $\epsilon>2M$ then
$f_t(w)\le M<\epsilon/2$ for every $w\in\simplex$, so $\mathrm{gap}_t\le M<\epsilon$
and $\dn{\gF(x_t)w_t}\le M<\epsilon$ for every $t$. Assume $\epsilon\le2M$, so
also $\beta\le2M$. Then $\kappa=\beta/(4MK)\le1/(2K)<1/K$, so $\simplex^\kappa$
has non-empty relative interior; and
\[
\rho=\bar\eta\tau=\frac{\epsilon^2\beta^2}{1536\,M^4\ln^2K}\le\frac{16}{1536\ln^22}<1 ,
\]
so Lemma~\ref{lem:recursion} applies. Note $\Lambda=\ln(1/\kappa)=\ln(4MK/\beta)$.

\emph{(i) Tracking.} Iterating Lemma~\ref{lem:recursion} with the uniform
bound $\chi^2(p_t\|p_{t+1})\le2L\bar\gamma/(\tau\kappa)$ of Lemma~\ref{lem:sens},
from $\Phi_0=\KL(p_0\|u_K)\le\ln K$, and summing the geometric series
$\sum_{j\ge0}(1-\rho/4)^j=4/\rho$,
\begin{equation}\label{eq:Phi-bound}
\Phi_t\le\Big(1-\frac\rho4\Big)^t\ln K+\frac{16\bar\eta^2M^2}{\rho}+\frac{8(2+\Lambda)}{\rho^2}\cdot\frac{2L\bar\gamma}{\tau\kappa}
=\Big(1-\frac\rho4\Big)^t\ln K+\frac{16\bar\eta M^2}{\tau}+\frac{16(2+\Lambda)L\bar\gamma}{\bar\eta^2\tau^3\kappa}.
\end{equation}
We show each of the three terms is at most $\epsilon^2/(24M^2)$ for $t\ge T_0$.
The second equals
$16M^2\cdot\frac{\epsilon^2\beta}{768M^4\ln K}\cdot\frac{2\ln K}{\beta}=\epsilon^2/(24M^2)$
exactly. For the third, substitute
$\bar\eta^2=\epsilon^4\beta^2/(768^2M^8\ln^2K)$, $\tau^3=\beta^3/(8\ln^3K)$
and $\kappa=\beta/(4MK)$, so that
$\bar\eta^2\tau^3\kappa=\epsilon^4\beta^6/(9\cdot2^{21}M^9K\ln^5K)$ (as
$768^2\cdot8\cdot4=9\cdot2^{16}\cdot2^{5}$); then
\[
\frac{16(2+\Lambda)L\bar\gamma}{\bar\eta^2\tau^3\kappa}
=\frac{9\cdot2^{25}\,(2+\Lambda)LM^{9}K\ln^5K}{\epsilon^{4}\beta^{6}}\,\bar\gamma
=\frac{9\cdot2^{25}\,\epsilon^{2}}{27\cdot2^{28}M^{2}}=\frac{\epsilon^2}{24M^2}
\]
exactly, by the choice of $\bar\gamma$. For the first,
$(1-\rho/4)^t\le e^{-\rho t/4}\le\epsilon^2/(24M^2\ln K)$ once
$t\ge\frac4\rho\ln\frac{24M^2\ln K}{\epsilon^2}$, and
$4/\rho=6144M^4\ln^2K/(\epsilon^2\beta^2)$, so this holds for $t\ge T_0$.
Hence $\Phi_t\le\epsilon^2/(8M^2)$ for $t\ge T_0$, and Pinsker gives
$\|w_t-p_t\|_1\le\sqrt{2\Phi_t}\le\epsilon/(2M)$. Lemma~\ref{lem:bias} with
$2MK\kappa=\beta/2$ and $\tau\ln K=\beta/2$ gives $\mathrm{gap}(p_t)\le\beta$.
Finally, by (F1),
$\mathrm{gap}_t=\big(f_t(w_t)-f_t(p_t)\big)+\mathrm{gap}(p_t)\le M\|w_t-p_t\|_1+\beta\le\epsilon/2+\beta$.

\emph{(ii) Progress.} The bound $\frac1T\sum_{t<T}\dn{\gF(x_t)w_t}\le\Delta/(\bar\gamma T)+\bar\eta M^2/2+L\bar\gamma/2$
is \eqref{eq:det-master} of Theorem~\ref{thm:det}, which holds for every
$\tau\ge0$, $\kappa\le1/K$ and every $\gamma,\eta$. It remains to check the
three terms at the stated parameters. $\Delta/(\bar\gamma T)\le\epsilon/4$ for
$T\ge T_1=\lceil4\Delta/(\bar\gamma\epsilon)\rceil$. Next
$\bar\eta M^2/2=\epsilon^2\beta/(1536M^2\ln K)\le\epsilon/8$, because
$\epsilon\beta/M^2\le\epsilon^2/M^2\le4\le192\ln2\le192\ln K$. Finally
$L\bar\gamma/2\le\epsilon/8$ because, using $(2+\Lambda)K\ge1$,
$\ln K\ge\ln2$ and $\beta\le\epsilon\le2M$,
$L\bar\gamma=\epsilon^{6}\beta^{6}/(27\cdot2^{28}(2+\Lambda)M^{11}K\ln^5K)
\le\epsilon\,(\epsilon/M)^{11}/(27\cdot2^{28}\ln^52)\le\epsilon\,2^{11}/(27\cdot2^{28}\ln^52)<\epsilon/4$.
Hence the average is at most $\epsilon/4+\epsilon/8+\epsilon/8=\epsilon/2$ for
$T\ge T_1$.

\emph{(iii) Tail output.} Let $T\ge\max\{2T_0,T_1\}$. By (ii),
$\sum_{t=T_0}^{T-1}\dn{\gF(x_t)w_t}\le\sum_{t=0}^{T-1}\dn{\gF(x_t)w_t}\le T\epsilon/2$,
the first inequality by non-negativity of the dropped terms. The minimum over
the tail is at most its average, so
$\dn{\gF(x_{\hat t})w_{\hat t}}\le(T\epsilon/2)/(T-T_0)\le\epsilon$, using
$T-T_0\ge T/2$. Since $\hat t\ge T_0$, part (i) gives
$\mathrm{gap}_{\hat t}\le\epsilon/2+\beta$, and
$\min_w\dn{\gF(x_{\hat t})w}\le\dn{\gF(x_{\hat t})w_{\hat t}}\le\epsilon$. With $\beta=\epsilon/2$
the parameters reduce to $\tau=\epsilon/(4\ln K)$, $\kappa=\epsilon/(8MK)$,
$\bar\eta=\epsilon^3/(1536M^4\ln K)$,
$\bar\gamma=\epsilon^{12}/(27\cdot2^{34}(2+\Lambda)LM^{11}K\ln^5K)$ and
$T_0=\lceil(24576M^4\ln^2K/\epsilon^4)\ln(24M^2\ln K/\epsilon^2)\rceil$, which
gives the stated orders.
\end{proof}

\begin{proof}[Proof of Corollary~\ref{cor:iterwise-ca}]
By the triangle inequality,
$\mathrm{CA}_t\le\dn{\gF(x_t)(w_t-p_t)}+\dn{\gF(x_t)(p_t-w^\star(x_t))}$. The
first term is at most $M\|w_t-p_t\|_1\le\epsilon/2$ for $t\ge T_0$ by (F1)
and Theorem~\ref{thm:iterwise}(i). For the second, $w^\star(x_t)$ minimizes
$\varphi_q\circ\gF(x_t)$ over $\simplex$ (the same minimizers as $f_t$), so the
argument of Remark~\ref{rem:hilbert} applied with $w_t$ replaced by $p_t$
gives $\dn{\gF(x_t)(p_t-w^\star(x_t))}^q\le(q/\mu)M^{q-1}\mathrm{gap}(p_t)\le(q/\mu)M^{q-1}\beta$,
which equals $(\epsilon/2)^q$ for $\beta=\mu\epsilon^q/(q2^qM^{q-1})$. The
orders of $T_0$ and $T_1$ follow by substituting this $\beta$ into
Theorem~\ref{thm:iterwise}: in the Euclidean case $\beta=\epsilon^2/(8M)$,
so $\epsilon^2\beta^2=\epsilon^6/(64M^2)$ and $\epsilon^6\beta^6=\epsilon^{18}/(2^{18}M^6)$.
\end{proof}

\begin{remark}[On the gradient bound in Theorem~\ref{thm:iterwise}]\label{rem:M-reg}
Theorem~\ref{thm:iterwise} takes the trajectory gradient bound as a
hypothesis, and we keep it as one: unlike Theorem~\ref{thm:det} at
$\tau=\kappa=0$, we do not derive it. We record what a derivation would
involve. The induction of Lemma~\ref{lem:M} changes in the comparator: Lemma~\ref{lem:M} uses $u=e_i$, which is not in
$\simplex^\kappa$, so one applies \eqref{eq:md} at
$\hat e_i=(1-K\kappa)e_i+K\kappa u_K\in\simplex^\kappa$ and pays
$\gamma\|g_s\|_\infty\|e_i-\hat e_i\|_1\le2\bar\gamma MK\kappa$ per step for
the difference, together with the regularization term $\rho\ln K/\eta\cdot\gamma=\bar\gamma\tau\ln K$
per step from Lemma~\ref{lem:md}. Since $\tau\ln K+2MK\kappa=\beta\le\epsilon$
and $T_1\bar\gamma\le4\Delta/\epsilon+\bar\gamma$, over $T_1$ steps these add
at most $T_1\bar\gamma\,\beta\le4\Delta+\bar\gamma\epsilon$ to the
function-value increase, so $f_i(x_t)-f_i^\star\le5\Delta+C'$ with $C'$
collecting $\bar\gamma\epsilon$ and the (much smaller) remaining terms of
Lemma~\ref{lem:M} at these step sizes, and Lemma~\ref{lem:gradbound} converts
this into $\dn{\nabla f_i(x_t)}\le\sqrt{2L(5\Delta+C')}$, which is below the
$M$ of Lemma~\ref{lem:M} ($\ge4\sqrt{L\Delta}$) once $C'\le3\Delta$. Verifying
$C'\le3\Delta$ requires relating the remaining terms, which involve $M$,
$\epsilon$ and $\beta$ but not $\Delta$, to $\Delta$ through the explicit value
of $M$ in Lemma~\ref{lem:M} and the horizon actually run, which is
$\max\{2T_0,T_1\}$ rather than $T_1$. We have not carried this out, so the
gradient bound remains an assumption of Theorem~\ref{thm:iterwise}.
\end{remark}

\section{Uniform convexity of \texorpdfstring{$\ell_p$}{lp} and Schatten-\texorpdfstring{$p$}{p} norms}\label{app:lp}

Corollaries~\ref{cor:gap} and~\ref{cor:iterwise-ca} convert a bound on the
inner gap into a bound on the CA distance whenever the dual norm satisfies the
following uniform convexity inequality of power type $q\ge2$ with constant
$\mu\in(0,1]$: with $\varphi_q:=\tfrac1q\dn\cdot^q$,
\begin{equation}\tag{S2}\label{eq:S2}
\tfrac1q\dn{y}^q\;\ge\;\tfrac1q\dn{y'}^q+\ip{\nabla\varphi_q(y')}{y-y'}+\tfrac{\mu}{q}\dn{y-y'}^q
\qquad\text{for all }y,y' .
\end{equation}
(The conversion itself is in Remark~\ref{rem:hilbert}.) This appendix records
which $\ell_p$ and Schatten-$p$ norms satisfy \eqref{eq:S2}, and with which
$(q,\mu)$.

\begin{proposition}\label{prop:lp}
Let $\dn\cdot$ be the $\ell_p$ norm on $\R^m$ or the Schatten-$p$ norm on
$\R^{m\times n}$.
\begin{enumerate}[leftmargin=1.8em]
\item[(i)] For $1<p\le2$, \eqref{eq:S2} holds with $q=2$ and $\mu=p-1$: the
function $\psi_p:=\tfrac12\dn\cdot^2$ is $(p-1)$-strongly convex with respect
to $\dn\cdot$.
\item[(ii)] For $2\le p<\infty$, \eqref{eq:S2} holds with $q=p$ and
$\mu=2^{1-p}$.
\item[(iii)] For $p>2$, \eqref{eq:S2} fails with $q=2$ for every $\mu>0$: the
ratio $D_{\psi_p}(y,x)/\dn{y-x}^2$ can be made arbitrarily small.
\end{enumerate}
For $p=1$ (the $\ell_1$ and nuclear norms) the unit ball is not strictly convex
and no inequality of the form \eqref{eq:S2} can hold.
\end{proposition}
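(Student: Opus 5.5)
The plan is to prove each item from a classical \emph{midpoint} inequality and then convert it to the first-order form \eqref{eq:S2} with one dyadic-iteration lemma. Only (iii) and the case $p=1$ need explicit examples.

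\emph{Conversion lemma.} Let $f$ be convex and differentiable, and suppose $f(\tfrac{a+b}2)\le\tfrac12 f(a)+\tfrac12 f(b)-c\dn{a-b}^q$ for all $a,b$. Rewrite this as $f(b)-f(a)\ge2[f(m_1)-f(a)]+2c\dn{b-a}^q$ with $m_k:=a+2^{-k}(b-a)$. Apply it again to the pair $(a,m_1)$, then $(a,m_2)$, and so on. After $k$ steps this gives
\[
f(b)-f(a)\ge2^k\big[f(m_k)-f(a)\big]+2c\sum_{j=0}^{k-1}2^{j(1-q)}\dn{b-a}^q .
\]
As $k\to\infty$ the first term tends to $\ip{\nabla f(a)}{b-a}$, and the sum is at least its $j=0$ term. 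Hence
\[
f(b)\ge f(a)+\ip{\nabla f(a)}{b-a}+2c\dn{b-a}^q .
\]
Differentiability holds here because $\tfrac1q\dn\cdot^q$ with $q\ge2$ and $1<p<\infty$ is $C^1$: the $\ell_p$ and Schatten-$p$ norms are smooth away from $0$, and the power $q$ makes the gradient vanish continuously at $0$.

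\emph{(ii), $p\ge2$.} I will use Clarkson's inequality $\dn{\tfrac{x+y}2}^p+\dn{\tfrac{x-y}2}^p\le\tfrac12(\dn x^p+\dn y^p)$, and McCarthy's extension of it to Schatten-$p$. Dividing by $p$ gives the midpoint inequality for $\varphi_p$ with $c=2^{-p}/p$. The lemma then yields \eqref{eq:S2} with $\mu/q=2c=2^{1-p}/p$, that is, $q=p$ and $\mu=2^{1-p}$.

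\emph{(i), $1<p\le2$.} I will use the sharp 2-uniform convexity inequality of Ball--Carlen--Lieb, which holds for both $\ell_p$ and Schatten-$p$:
\[
\Big(\tfrac{\dn{x+y}^p+\dn{x-y}^p}{2}\Big)^{2/p}\ge\dn x^2+(p-1)\dn y^2 .
\]
By convexity of $s\mapsto s^{2/p}$, the left side dominates $\tfrac12(\dn{x+y}^2+\dn{x-y}^2)$. Substituting $x=\tfrac{a+b}2$ and $y=\tfrac{a-b}2$ gives midpoint strong convexity of $\tfrac12\dn\cdot^2$ with $c=(p-1)/8$. The lemma as stated then only gives constant $(p-1)/4$, half of what is claimed, because it keeps just the $j=0$ term. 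To recover the full modulus $p-1$ I will use that strong convexity is local: apply the midpoint inequality on the segment from $a$ to $m_k$, where it is exact to second order. Equivalently, I will use the classical fact that midpoint $\mu$-strong convexity is equivalent to $\mu$-strong convexity for continuous $f$, via $f-\tfrac\mu2\dn\cdot^2$ being midpoint convex. I expect this to be the main obstacle: the Schatten case of Ball--Carlen--Lieb is a genuinely nontrivial trace inequality, which I will cite rather than reprove, and the non-Hilbert norm forces the exact-constant bookkeeping just described.

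\emph{(iii) and $p=1$.} Take $y'=e_1$ and $y=e_1+te_2$ in $\ell_p$, or the diagonal matrices $E_{11}$ and $E_{11}+tE_{22}$ in Schatten-$p$. Then $\nabla\psi_p(e_1)=e_1$, so
\[
D_{\psi_p}(y,y')=\tfrac12\big((1+t^p)^{2/p}-1\big)\sim t^p/p ,
\]
while $\dn{y-y'}^2=t^2$. The ratio behaves like $t^{p-2}/p\to0$ as $t\to0$ for $p>2$, so no $\mu>0$ works. For $p=1$, \eqref{eq:S2} with any $q$ and $\mu>0$ applied at both endpoints and averaged (using a subgradient where the norm is not differentiable) would give $\dn{\tfrac{a+b}2}<1$ for distinct unit vectors $a,b$. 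This contradicts the flat segment exhibited in the proof of Proposition~\ref{prop:unique} ($a=e_1$, $b=e_2$, or $E_{11}$, $E_{22}$).
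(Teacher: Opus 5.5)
Your decomposition is essentially the paper's. You use Ball--Carlen--Lieb plus Jensen and a midpoint-to-first-order upgrade for (i), Clarkson--McCarthy for (ii), the same $e_1$ versus $e_1+te_2$ computation for (iii), and the flat segment of Proposition~\ref{prop:unique} for $p=1$. Your dyadic lemma packages what the paper does by hand: its proof of (ii) is your lemma at $k=1$, with $2[f(m_1)-f(a)]\ge\ip{\nabla f(a)}{b-a}$ taken from convexity instead of a limit.

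Two things in (i) need repair.

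\emph{Direction of Jensen.} Convexity of $s\mapsto s^{2/p}$ gives $\big(\tfrac{\dn{x+y}^p+\dn{x-y}^p}{2}\big)^{2/p}\le\tfrac12\big(\dn{x+y}^2+\dn{x-y}^2\big)$. That is the direction you need to chain with Ball--Carlen--Lieb. As you wrote it (``the left side dominates''), the two inequalities do not combine.

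\emph{The factor-$2$ obstacle.} This loss is self-inflicted, and the repair you propose is not valid here. You lose the $2$ only by truncating the sum. At $q=2$ one has $\sum_{j\ge0}2^{-j}=2$, so your lemma gives the constant $4c=(p-1)/2=\mu/2$ with $\mu=p-1$ exactly. For (ii), keeping the full sum even improves $\mu$ to $1/(2^{p-1}-1)$, which is sharp at $p=2$.

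Your fallback through midpoint convexity of $f-\tfrac\mu2\dn{\cdot}^2$ is a Hilbert-space argument. Both directions of that equivalence use the parallelogram identity. For example, take $\R^2$ with $1<p<2$. Then $\tfrac12\|x\|_2^2$ is $2^{1-2/p}$-strongly convex with respect to $\|\cdot\|_p$. Yet $\tfrac12\|x\|_2^2-2^{-2/p}\|x\|_p^2$ vanishes at $e_1\pm e_2$ and equals $\tfrac12-2^{-2/p}>0$ at their midpoint $e_1$, so it is not midpoint convex.

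The equivalence of midpoint and ordinary $\mu$-strong convexity for continuous $f$ does hold in any norm. It holds only by passing to the restriction $s\mapsto f(a+s(b-a))$, on which the norm becomes $|s|\,\dn{b-a}$. That one-dimensional reduction is what the paper's ``dyadic-subdivision'' upgrade amounts to.
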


\begin{proof}
(i) Theorem~1 of \citet{ball1994sharp} (optimal $2$-uniform convexity) states
that for $1\le p\le2$ and all $X,Y$,
\begin{equation}\label{eq:bcl}
\Big(\frac{\dn{X+Y}^p+\dn{X-Y}^p}{2}\Big)^{2/p}\;\ge\;\dn X^2+(p-1)\dn Y^2 ,
\end{equation}
both for $\ell_p$ (their case (a), functions in $L_p$) and for Schatten-$p$
(their case (b), matrices in $C_p$). Put $X=\tfrac{a+b}{2}$ and
$Y=\tfrac{a-b}{2}$, so that $X+Y=a$ and $X-Y=b$. Because $2/p\ge1$, the map
$s\mapsto s^{2/p}$ is convex on $[0,\infty)$ and Jensen's inequality gives
$\big(\tfrac{\dn a^p+\dn b^p}{2}\big)^{2/p}\le\tfrac12\dn a^2+\tfrac12\dn b^2$.
Combining with \eqref{eq:bcl},
\[
\Big\|\frac{a+b}{2}\Big\|_*^2+(p-1)\Big\|\frac{a-b}{2}\Big\|_*^2\;\le\;\tfrac12\dn a^2+\tfrac12\dn b^2
\qquad\text{for all }a,b .
\]
Multiplying by $\tfrac12$ gives
$\psi_p(\tfrac{a+b}{2})\le\tfrac12\psi_p(a)+\tfrac12\psi_p(b)-\tfrac{p-1}{8}\dn{a-b}^2$,
which is midpoint strong convexity with modulus $p-1$; for continuous functions
this upgrades to strong convexity by the usual dyadic-subdivision argument, and
strong convexity of $\psi_p$ is \eqref{eq:S2} with $q=2$, $\mu=p-1$.

(ii) Let $\phi:=\dn\cdot^p$. Clarkson's inequality for
$p\ge2$ states that for all $y,y'$,
\begin{equation}\label{eq:clarkson}
\phi\Big(\frac{y+y'}{2}\Big)+\phi\Big(\frac{y-y'}{2}\Big)\le\frac{\phi(y)+\phi(y')}{2}.
\end{equation}
For $\ell_p$ this follows coordinatewise from the scalar case, which is
$X^{p/2}+Y^{p/2}\le(X+Y)^{p/2}\le\tfrac12(|a|^p+|b|^p)$ for
$X=(\tfrac{a+b}{2})^2$, $Y=(\tfrac{a-b}{2})^2$, $X+Y=\tfrac12(a^2+b^2)$, using
superadditivity of $s\mapsto s^{p/2}$ on $[0,\infty)$ (as $p/2\ge1$) and then
convexity of $s\mapsto s^{p/2}$; for Schatten norms \eqref{eq:clarkson} is
McCarthy's extension of Clarkson's inequality to trace classes
\citep{mccarthy1967cp}. Now let $m:=(y+y')/2$. Convexity of $\phi$ gives
$\ip{\nabla\phi(y')}{m-y'}\le\phi(m)-\phi(y')$, and $y-y'=2(m-y')$, so
\begin{align*}
D_\phi(y,y')&:=\phi(y)-\phi(y')-\ip{\nabla\phi(y')}{y-y'}
\ge\phi(y)-\phi(y')-2\big(\phi(m)-\phi(y')\big)\\
&=\phi(y)+\phi(y')-2\phi(m)\ge2\phi\Big(\frac{y-y'}{2}\Big)=2^{1-p}\dn{y-y'}^p ,
\end{align*}
the last inequality being \eqref{eq:clarkson}. Dividing by $p$ gives
\eqref{eq:S2} for $\varphi_p=\phi/p$ with $q=p$ and $\mu=2^{1-p}$.

(iii) Take $x=e_1$ and $y=e_1+te_2$ with $t\downarrow0$ (for Schatten norms,
$x=E_{11}$, $y=E_{11}+tE_{22}$). Since
$\nabla\psi_p(g)=\|g\|_p^{2-p}\big(\sign(g_i)|g_i|^{p-1}\big)_i$ we have
$\nabla\psi_p(e_1)=e_1$, hence $\ip{\nabla\psi_p(x)}{y-x}=0$ and
$D_{\psi_p}(y,x)=\tfrac12(1+t^p)^{2/p}-\tfrac12=\tfrac1pt^p+O(t^{2p})$ while
$\|y-x\|_p^2=t^2$, so the ratio is $\tfrac1pt^{p-2}+O(t^{2p-2})\to0$ for $p>2$.
Finally, if the unit ball is not strictly convex there are $y\ne y'$ on the
unit sphere with $\dn{(y+y')/2}=1$, and \eqref{eq:S2} applied at $y$ and $y'$
around the midpoint forces $\mu=0$.
\end{proof}

The constant $\mu=2^{1-p}$ in (ii) is not sharp (numerically the best constant
for $\ell_p$ is about $0.59$, $0.33$ and $0.10$ at $p=3,4,6$, against $0.25$,
$0.125$ and $0.031$), but it is explicit. Statement (iii) is why the power
type $q=p$, rather than $2$, appears in Corollary~\ref{cor:iterwise-ca} for
$p>2$: the unit sphere of $\ell_p$ is flatter than quadratic at the coordinate
points.

\section{Inexact oracles}\label{app:inexact}

The analysis uses the oracle only through $\pn{D_t}\le R$ and
$\ip{G_t}{D_t}=-R\dn{G_t}$. In the spectral case the polar factor is computed
by a Newton--Schulz iteration and is only approximate. Suppose instead that for
some $\vartheta\in[0,1)$ and all $t$,
\begin{equation}\label{eq:inexact}
\pn{D_t}\le(1+\vartheta)R,\qquad \ip{G_t}{D_t}\le-(1-\vartheta)R\,\dn{G_t},
\end{equation}
and that $\max_i\dn{\nabla f_i(x_t)}\le M$ along the trajectory (we do not
re-derive $M$ here). Then $\|g_t\|_\infty\le(1+\vartheta)RM$, and the two
places where the oracle identity enters the deterministic analysis change as
follows.

\begin{proposition}\label{prop:inexact}
Under Assumptions~\ref{as:smooth}--\ref{as:lb} and \eqref{eq:inexact}, with
$\tau=\kappa=0$, $\gamma R=\sqrt{2\Delta/(LT)}$ and $\eta R=\sqrt{2\ln K}/(M\sqrt T)$:
\begin{align*}
\frac1T\sum_{t<T}\dn{\gF(x_t)w_t}&\le\frac{(1+\vartheta)^2}{1-\vartheta}\cdot\frac{\sqrt{2\Delta L}+M\sqrt{\ln K/2}}{\sqrt T},\\
\frac1T\sum_{t<T}\big[\dn{\gF(x_t)w_t}-\dn{\gF(x_t)u}\big]&\le\frac{(1+\vartheta)^2}{1-\vartheta}M\sqrt{\frac{2\ln K}{T}}+\frac{2\vartheta M}{1-\vartheta}
\qquad\text{for every }u\in\simplex .
\end{align*}
\end{proposition}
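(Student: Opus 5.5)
We rerun the deterministic argument of Theorem~\ref{thm:det} and Theorem~\ref{thm:regret}, tracking the two places where the exact identity \eqref{eq:lmo} was used. The plan has three steps: a perturbed one-step inequality, a perturbed subgradient inequality for the regret, and a final check of the constants.

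\emph{Step 1 (one-step inequality).} Redo Lemma~\ref{lem:onestep} under \eqref{eq:inexact}. Since $\pn{x_{t+1}-x_t}\le(1+\vartheta)\gamma R$, the quadratic term becomes $\tfrac12L\gamma^2R^2(1+\vartheta)^2$. Decomposing $\ip{\gF(x_t)w}{D_t}=\ip{G_t}{D_t}+\ip{g_t}{w-w_t}$ exactly as before, we obtain
\[
F(x_{t+1})w\le F(x_t)w-(1-\vartheta)\gamma R\dn{G_t}+\gamma\ip{g_t}{w-w_t}+\tfrac12(1+\vartheta)^2L\gamma^2R^2 .
\]
Lemma~\ref{lem:md} holds for arbitrary scores, so we use it verbatim with $\|g_t\|_\infty\le(1+\vartheta)RM$. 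Adding $\gamma/\eta$ times \eqref{eq:md} at $u=u_K$, telescoping, and dividing by $(1-\vartheta)\gamma RT$ gives
\[
\frac1T\sum_t\dn{G_t}\le\frac{1}{1-\vartheta}\Big[\frac{\Delta}{\gamma RT}+(1+\vartheta)^2\frac{\eta RM^2}{2}+(1+\vartheta)^2\frac{L\gamma R}{2}\Big].
\]
We then bound the bracket by $(1+\vartheta)^2$ times the exact master bound \eqref{eq:det-master}, and substitute the step sizes as in Theorem~\ref{thm:det}. This yields the first display.

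\emph{Step 2 (regret).} The subgradient inequality \eqref{eq:subgrad-step} no longer holds exactly, because $D_t$ is not in $\lmo(G_t)$. We argue from the definition of the dual norm, as in the proof of Corollary~\ref{cor:regret-stoch}. For $u\in\simplex$, $\pn{D_t}\le(1+\vartheta)R$ gives $\dn{A_tu}\ge-\ip{g_t}{u}/((1+\vartheta)R)$. On the other side, \eqref{eq:inexact} gives $\dn{G_t}\le-\ip{g_t}{w_t}/((1-\vartheta)R)$.

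The idea is to normalize by $(1-\vartheta)R$. Writing $\psi_t(u)=\dn{A_tu}$, we get
\[
(1-\vartheta)\psi_t(w_t)-(1+\vartheta)\psi_t(u)\le\frac{1}{R}\ip{g_t}{u-w_t}.
\]
Rearranging,
\[
\psi_t(w_t)-\psi_t(u)\le\frac{\ip{g_t}{u-w_t}}{(1-\vartheta)R}+\frac{2\vartheta}{1-\vartheta}\psi_t(u),
\]
and $\psi_t(u)\le M$ by (F1)-type convexity. This produces the additive term $2\vartheta M/(1-\vartheta)$.

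We sum with Lemma~\ref{lem:md}, using $\KL(u\|w_0)\le\ln K$ and $\|g_t\|_\infty^2\le(1+\vartheta)^2R^2M^2$. Dividing by $(1-\vartheta)RT$ gives
\[
\frac{1}{1-\vartheta}\Big[\frac{\ln K}{\eta RT}+(1+\vartheta)^2\frac{\eta RM^2}{2}\Big]\le\frac{(1+\vartheta)^2}{1-\vartheta}\,M\sqrt{\frac{2\ln K}{T}}
\]
at the chosen $\eta$.

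\emph{Main obstacle.} The only delicate point is the regret step: the lower bound on $\psi_t(u)$ and the upper bound on $\psi_t(w_t)$ carry mismatched factors $(1\pm\vartheta)$. Choosing how to combine them determines whether the residual is additive and $O(\vartheta M)$; the rescaling above is what I would try first. Everything else is bookkeeping, with $(1+\vartheta)^2\ge1$ used to absorb the factors into the common constant.
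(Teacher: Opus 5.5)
Your proposal is correct and matches the paper's proof essentially step for step. Both use the same perturbed one-step inequality, with $(1-\vartheta)$ on the descent term and $(1+\vartheta)^2$ on the quadratic and mirror-descent error terms. For the regret, both use the same pair of dual-norm bounds, $\psi_t(u)\ge-\langle g_t,u\rangle/((1+\vartheta)R)$ and $\psi_t(w_t)\le-\langle g_t,w_t\rangle/((1-\vartheta)R)$.

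The only difference is cosmetic. You cross-multiply and bound the residual by $\frac{2\vartheta}{1-\vartheta}\psi_t(u)\le\frac{2\vartheta M}{1-\vartheta}$. The paper writes the residual as $-(a-b)\langle g_t,u\rangle$ and bounds $|\langle g_t,u\rangle|\le(1+\vartheta)RM$, which gives the same constant. Both rely on the trajectory bound $\max_i\|\nabla f_i(x_t)\|_*\le M$, which the paper assumes just before the proposition and which you use without stating.
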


\begin{proof}
\emph{One-step inequality.} The proof of Lemma~\ref{lem:onestep} gives
$F(x_{t+1})w\le F(x_t)w+\gamma\ip{A_tw}{D_t}+\tfrac12L\gamma^2\pn{D_t}^2$; with
\eqref{eq:inexact}, $\ip{A_tw}{D_t}=\ip{G_t}{D_t}+\ip{g_t}{w-w_t}\le-(1-\vartheta)R\dn{G_t}+\ip{g_t}{w-w_t}$
and $\pn{D_t}^2\le(1+\vartheta)^2R^2$. Repeating the proof of
Theorem~\ref{thm:det} with $u=u_K$ and $\|g_t\|_\infty\le(1+\vartheta)RM$,
\[
(1-\vartheta)\frac1T\sum_t\dn{G_t}\le\frac{\Delta}{\gamma RT}+(1+\vartheta)^2\frac{\eta RM^2}{2}+(1+\vartheta)^2\frac{L\gamma R}{2}
\le(1+\vartheta)^2\Big[\frac{\Delta}{\gamma RT}+\frac{\eta RM^2}{2}+\frac{L\gamma R}{2}\Big],
\]
and the step sizes give the first claim.

\emph{Regret.} Write $\psi_t(w)=\dn{A_tw}$. For any $u\in\simplex$, H\"older
gives $\dn{A_tu}\pn{D_t}\ge-\ip{A_tu}{D_t}=-\ip{g_t}{u}$, hence
$\psi_t(u)\ge-\ip{g_t}{u}/((1+\vartheta)R)$ (trivially if the right-hand side is
negative). For $w_t$, \eqref{eq:inexact} gives
$\psi_t(w_t)=\dn{G_t}\le-\ip{G_t}{D_t}/((1-\vartheta)R)=-\ip{g_t}{w_t}/((1-\vartheta)R)$.
Subtracting, with $a:=1/((1-\vartheta)R)$ and $b:=1/((1+\vartheta)R)$,
\begin{align*}
\psi_t(w_t)-\psi_t(u)&\le-a\ip{g_t}{w_t}+b\ip{g_t}{u}
=a\ip{g_t}{u-w_t}-(a-b)\ip{g_t}{u}\\
&\le\frac{\ip{g_t}{u-w_t}}{(1-\vartheta)R}+(a-b)(1+\vartheta)RM ,
\end{align*}
using $|\ip{g_t}{u}|\le\|g_t\|_\infty\le(1+\vartheta)RM$. Since
$(a-b)(1+\vartheta)RM=M(1+\vartheta)\big[\tfrac1{1-\vartheta}-\tfrac1{1+\vartheta}\big]=\tfrac{2\vartheta M}{1-\vartheta}$,
summing and applying Lemma~\ref{lem:md} as in the proof of
Theorem~\ref{thm:regret}, with $\|g_t\|_\infty^2\le(1+\vartheta)^2R^2M^2$, gives
the second claim.
\end{proof}

The multiplicative factor $(1+\vartheta)^2/(1-\vartheta)$ is controlled
when $\vartheta$ is bounded away from one. The second inequality has an
additive floor because of the mismatch between the upper and lower oracle
constants. This is a limitation of that comparator argument, not an
unavoidable floor for the average gap or regret along the descent trajectory.
Indeed, if $B_T$ denotes the first bound in Proposition~\ref{prop:inexact},
\eqref{eq:gap-stationarity} gives
\[
\begin{aligned}
\frac1T\sum_t\mathrm{gap}_t&\le B_T,\\
\frac1T\left[\sum_t\dn{A_tw_t}-\min_{u\in\simplex}\sum_t\dn{A_tu}\right]
&\le B_T=O(T^{-1/2}).
\end{aligned}
\]
Thus fixed relative oracle error satisfying \eqref{eq:inexact} is already
tolerated for deterministic stationarity and average inner optimality.

We do not extend this proposition to the stochastic or iteration-wise
results. There, oracle error must be propagated through the tracking and
subgradient arguments. Moreover, the error parameters of a practical
Newton--Schulz implementation depend on its polynomial, normalization,
coefficient schedule, numerical precision, and input spectrum. We have not
established a uniform $\vartheta<1$ for the five-step quintic implementation
used in the experiments, nor a geometric error-decay claim for that particular
implementation. Relating it to the oracle assumptions and extending the
stochastic and iteration-wise guarantees remain open.

\section{Experimental details}\label{app:expdetails}

\subsection{Cityscapes and NYU-v2}\label{app:densedetails}

\paragraph{Protocol.} We follow the protocol of
\citet{navon2022nashmtl,liu2023famo}: an MTAN model on a SegNet backbone, with 200 epochs, and batch sizes 8 on Cityscapes and 2 on NYU-v2.
The standard baselines use Adam with learning rate $10^{-4}$, which is multiplied by $0.5$ after epoch 100. Our method applies the oracle direction directly with step size $\gamma$, while MOON uses the released Muon+AdamW optimizer. Reported
metrics are averaged over the final 10 epochs with one seed; these epochs
are correlated and do not estimate variability across independent runs. The baseline rows
are re-runs with the same protocol. Cityscapes has $K=2$ tasks
(7-class semantic segmentation, depth); NYU-v2 has $K=3$ (13-class
segmentation, depth, surface normals).

\paragraph{Metrics.} Segmentation: mean IoU and pixel accuracy (higher is
better). Depth: absolute and relative error (lower is better). Normals: mean
and median angular error (lower is better) and the fraction of pixels within
$11.25^\circ$, $22.5^\circ$ and $30^\circ$ (higher is better). The aggregate is
\[
\Delta m\%=\frac{100}{n}\sum_{j=1}^{n}(-1)^{s_j}\,\frac{M_j-M_{\mathrm{STL},j}}{M_{\mathrm{STL},j}},
\]
over the $n$ metrics, with $s_j=1$ if higher is better and $0$ otherwise.
For each dataset, $M_{\mathrm{STL}}$ is the corresponding single-task
baseline reported in the STL row of the same table. Thus, the STL reference
is $(74.11,93.44,0.0137,38.22)$ on Cityscapes and
$(38.77,63.98,0.6350,0.2654,25.10,19.06,30.12,57.66,69.53)$ on NYU-v2. Accordingly, the STL row has $\Delta m=0$, and all other $\Delta m$ values
are computed relative to these references.

\paragraph{Our method.}
We use Algorithm~\ref{alg:stoch} with the spectral oracle on convolutional and
linear weights (4D kernels flattened over their last three dimensions) and the
Euclidean oracle on the remaining parameters. Each matrix-valued parameter
tensor is treated as a unit-radius spectral-norm block, $\rho_\ell=1$, without
width- or kernel-dependent rescaling, while all scalar and vector parameters
are concatenated into a single unit-radius Euclidean block. The outer model
step size controls the overall update magnitude. We use seed $42$ for all
dense-task experiments, with momentum $\alpha=0.1$, $\tau=0$, and $\kappa=0$
fixed throughout.

Hyperparameter tuning uses the same non-training benchmark split on which the
dense-task results are reported: the official 500-image validation split for
Cityscapes and the 654-image evaluation (test) split for NYU-v2. Thus, no
additional held-out validation split is used for these dense-task experiments.
Short-horizon segmentation performance is used as the primary tuning criterion,
subject to maintaining joint-task performance, while the final late-training
schedule is selected using the final-window $\Delta m$.
For Cityscapes, we search
$\gamma\in\{10^{-3},5\times10^{-3}\}$ and
$\eta\in\{10^{-4},5\times10^{-4}\}$, selecting
$\gamma=5\times10^{-3}$ and $\eta=10^{-4}$. The final model-step schedule uses
$5\times10^{-3}$ for epochs 1--100, $2.5\times10^{-3}$ for epochs 101--150,
and $1.25\times10^{-3}$ for epochs 151--200. For NYU-v2, we search
$\gamma\in\{2.5\times10^{-3},5\times10^{-3}\}$ and
$\eta\in\{5\times10^{-5},10^{-4}\}$, selecting
$\gamma=2.5\times10^{-3}$ and $\eta=10^{-4}$; the model step is reduced to
$1.25\times10^{-3}$ from epoch 101 onward. Standard baselines use their
reference hyperparameters without an additional tuning grid.

\subsection{MultiMNIST}\label{app:mnistdetails}

\paragraph{Protocol.} Following \citep{moon2026}, we use MultiMNIST, where each
example contains two MNIST digits and the two tasks are to classify the left
and right digits. The digits are concatenated horizontally and, with
probability $0.5$, overlapped by up to $30\%$ of their width. The resulting
images are resized to $28\times43$. We generate 10{,}000 training examples from
MNIST-train and 1{,}000 test examples from MNIST-test. We use a small Vision
Transformer with patch size $4$ and embedding dimension $16$, consisting of a
class token, learned positional embeddings, and $3$ transformer encoder layers,
each with $2$ attention heads and a feed-forward dimension of $64$. No dropout
is used, and the shared encoder is followed by one linear classification head
for each task. All methods in Table~\ref{tab:mnist} are evaluated on the same
generated dataset and trained for 100 epochs with batch size $256$.

\paragraph{Our method.} Algorithm~\ref{alg:stoch} with the unregularized,
unclipped specialization of the weight update in Algorithm~\ref{alg:main}. At each iteration, we compute
per-task gradients for all trainable model parameters and form their weighted
aggregate. The oracle comes from the previous momentum buffer.
For every parameter tensor with at least two dimensions, we flatten all but
the first dimension and apply five Newton--Schulz steps. If the resulting
matrix is taller than wide, we first transpose it. Writing $G$ for this
(possibly transposed) matrix, the iteration is carried out in \texttt{float32},
initialized as $X_0=G/(\|G\|_F+10^{-7})$, and uses
\[
X_{r+1}=aX_r+b(X_rX_r^\top)X_r+c(X_rX_r^\top)^2X_r,
\qquad (a,b,c)=(3.4445,-4.7750,2.0315),
\]
for five steps. Each
matrix-valued parameter tensor is treated as a unit-radius spectral-norm block,
without width- or kernel-dependent rescaling, while all scalar and vector
parameters are concatenated into a single unit-radius Euclidean block. The
blended momentum buffer is initialized at zero; its first update uses
$\alpha_0=1$, and subsequent updates use $\alpha=0.1$. We use model step size
$\gamma=10^{-3}$, weight step $\eta=5\times10^{-5}$, $\tau=0$, and $\kappa=0$.

\paragraph{MOON.} We use \citep{moon2026}'s released implementation: the task logits are updated with Adam at learning rate
$10^{-4}$ and weight decay $10^{-3}$, Muon uses Nesterov momentum, and tasks
are scored using the previous optimizer update. The data, model, number of
epochs, batch size and numerical model learning rate are shared with our
method. Momentum conventions, logit optimizers, weight decay, and auxiliary
parameter updates differ; a common numerical learning rate need not imply a
common effective update size. The comparison therefore evaluates complete
implementations rather than isolating a single algorithmic component.

\paragraph{Other baselines.} MGDA and FAMO use Adam, as in MOON's experiments;
``+ Muon'' replaces Adam on the encoder matrices by Muon while keeping the
baseline's task weights, and Muon (LS) uses Muon with equal weights.
In all MultiMNIST runs the model learning rate is $10^{-3}$; MGDA uses its
minimum-norm task weights, FAMO uses weight learning rate $0.025$ and
$\gamma=0.01$, and the Muon variants use momentum $0.95$, five Newton--Schulz
steps, RMS matching $0.2$, and weight decay $10^{-3}$, with auxiliary Adam
parameters using $(\beta_1,\beta_2)=(0.9,0.95)$.

\paragraph{Tuning and reporting.} All hyperparameter selection is performed on a fixed $10\%$ validation split drawn from MNIST-train using split seed $2026$. The weight-step sweep evaluates $\eta\in\{10^{-5},5\times10^{-5},10^{-4},5\times10^{-4},10^{-3}\}$ (Table~\ref{tab:eta}), together with the other candidate configurations and schedules considered during tuning. Final candidates are evaluated over seeds $42$, $43$, and $44$ using the mean validation accuracy over epochs 96--100. After all hyperparameters are frozen, the selected configuration is evaluated on MNIST-test.

\paragraph{Ablations.} Each ablation changes one component of our method and
keeps the others, including the step sizes tuned for the spectral
configuration, on three (42, 43, and 44) seeds. These are fixed-hyperparameter
substitutions, not comparisons of separately tuned variants. In particular,
the oracle results do not establish superiority over a retuned Euclidean
oracle, and the weight results do not establish equivalence of entropic and
projected updates. For the oracle ablations, the sign variant uses the
unit-radius $\ell_\infty$ oracle $\mathrm{sign}(M)$, while the $\ell_2$
variant uses the unit-radius Euclidean oracle $M/\|M\|_2$ on the full parameter
vector. The spectral configuration follows the product geometry of the main
method: each matrix-valued parameter tensor forms a unit-radius spectral-norm
block, while all scalar and vector parameters are concatenated into a single
unit-radius Euclidean block. No additional RMS matching or $0.2$ rescaling is
applied to these oracle variants.

\emph{Weights}: ``projected'' replaces the entropic step by the
Euclidean projection of $w_t+\eta g_t$ onto $\simplex^\kappa$. \emph{Momentum}:
``per-task'' keeps one buffer $m_{t,i}$ per task, evaluates the oracle at
$\sum_iw_{t,i}m_{t,i}$ and scores with $\langle D_t,m_{t,i}\rangle$
(Appendix~\ref{app:pertask}); ``none'' evaluates the oracle at the current
blended minibatch gradient. \emph{Step $\eta$}: the grid above, all other
settings fixed.

\begin{table}[h]\centering\small
\begin{tabular}{lccccc}
\toprule
$\eta$ & $10^{-5}$ & $5\times10^{-5}$ & $10^{-4}$ & $5\times10^{-4}$ & $10^{-3}$\\
\midrule
Average accuracy (\%) & 90.20 & \textbf{90.54} & 90.08 & 89.69 & 89.53\\
Final inner gap & 1.97 & 0.90 & 0.84 & 0.29 & 1.04\\
\bottomrule
\end{tabular}
\caption{MultiMNIST: sensitivity of our method to the weight step $\eta$ (spectral
oracle, entropic weights, blended momentum), test accuracy averaged over three
seeds. The main configuration, $\eta=5\times10^{-5}$, was selected on the
validation split.}
\label{tab:eta}
\end{table}

\begin{table}[!ht]
\centering
\scriptsize
\setlength{\tabcolsep}{5pt}
\begin{tabular}{lccccc}
\toprule
$\kappa$ & 0 & 0.48 & 0.49 & 0.495 & 0.4975 \\
\midrule
Avg.$\uparrow$ 
& 90.54 
& 90.54 
& 90.73 
& \textbf{91.11} 
& 90.96 \\
Gap$\downarrow$ 
& \textbf{0.90} 
& \textbf{0.90} 
& 1.17 
& 1.22 
& 1.65 \\
\bottomrule
\end{tabular}
\caption{Ablation of the clipping parameter $\kappa$ on MultiMNIST: test accuracy and final inner gap, averaged over three seeds.}
\label{tab:kappa_ablation}
\end{table}

\paragraph{Clipping.} Table~\ref{tab:kappa_ablation} varies the floor $\kappa$
with all other settings of the main configuration fixed ($\eta=5\times10^{-5}$,
$\tau=0$). For $\kappa\le0.48$ the results coincide with the unclipped run to
the reported precision, so the floor essentially never binds: with this weight
step the weights remain within $[0.48,0.52]$. As $\kappa$ approaches
$1/K=0.5$ the floor forces the weights toward uniform; the mean accuracy rises
slightly, to $91.11\%$ at $\kappa=0.495$, while the empirical inner gap grows
from $0.90$ to $1.65$. Since $\kappa\to1/K$ reduces the method to linear
scalarization with the same oracle and momentum, and this configuration
reaches a similar accuracy, the margin over the baselines in
Table~\ref{tab:mnist} cannot be attributed to the adaptive weighting at $K=2$.

\paragraph{Empirical inner-gap diagnostic.}
At the end of training, we compute per-task gradients of all trainable
parameters on a fixed probe consisting of the first $256$ training examples
and report the simplex Frank--Wolfe gap of
\[
f(w)=\left\|\sum_i w_i\nabla \ell_i\right\|_* 
\]
at the final task weights. Specifically, writing
$G(w)=\sum_i w_i\nabla\ell_i$ and letting $P(G(w))$ denote the exact
geometry-specific polar direction, we report
\[
\mathrm{gap}
=
\left\langle G(w),P(G(w))\right\rangle
-
\min_i
\left\langle \nabla\ell_i,P(G(w))\right\rangle .
\]
For the spectral configuration, the exact diagnostic uses the product geometry
of the method: each matrix-valued parameter block has unit spectral-norm
radius, whose dual contribution is the nuclear norm, while all scalar and
vector parameters form a single unit-$\ell_2$ block. The sign oracle corresponds
to a unit $\ell_\infty$ ball over the full parameter vector and hence an
entrywise $\ell_1$ dual norm. The $\ell_2$ ablation uses one unit Euclidean
ball over the full parameter vector, so its dual is the global $\ell_2$ norm,
rather than a sum of per-layer Frobenius norms. Because the underlying
geometries differ, the reported gaps for different oracle variants are not on
a common numerical scale. This is a fixed-minibatch diagnostic rather than the
exact full-dataset inner optimization error appearing in the theoretical
analysis. All reported main configurations use $\tau=\kappa=0$ and therefore
do not directly test the regularized iteration-wise tracking guarantee.

\paragraph{Code.} An anonymous implementation of all methods and experiments is available at
\url{https://anonymous.4open.science/r/LMO-MOO-D648/}.

\end{document}